\theoremstyle{plain}
\newtheorem{theorem}{Theorem}
\newtheorem{lemma}[theorem]{Lemma}
\theoremstyle{definition}
\newtheorem{assumption}[theorem]{Assumption}
\newtheorem{remark}[theorem]{Remark}
\newcommand{\p}{\mathcal{P}_i}
\newcommand{\hp}{\hat{\mathcal{P}}_i}
\newcommand{\R}{\mathbb{R}}
\newcommand{\E}{\mathbb{E}}
\newcommand{\TT}{T_{\delta}}
\newcommand{\Ai}{A_i}
\newcommand{\Ci}{C_i}
\newcommand{\Di}{D_i}
\newcommand{\eqdef}{:=}
\newcommand{\ii}{^{(i)}}
\newcommand{\Lip}{l}
\newcommand{\probs}{p}
\DeclareMathOperator{\dom}{dom}
\newcommand{\Prob}{\mathbb{P}}
\title{Inexact Coordinate Descent: Complexity and Preconditioning\footnote{This work was supported by the EPSRC grant EP/I017127/1 ``Mathematics for vast digital resources''. Peter Richt\'{a}rik  was also supported by the Centre for Numerical Algorithms and Intelligent Software (funded by EPSRC grant EP/G036136/1 and the Scottish Funding Council).}}
\author{Rachael Tappenden \qquad   Peter Richt\'{a}rik \qquad   Jacek Gondzio\\\\\emph{School of Mathematics}\\\emph{University of Edinburgh}\\\emph{United Kingdom}}
\begin{document}

\maketitle

\begin{abstract}
In this paper we consider the problem of minimizing a convex function using a randomized block coordinate descent method. One of the key steps at each iteration of the algorithm is determining the update to a block of variables. Existing algorithms assume that in order to compute the update, a particular subproblem is solved \emph{exactly}. In this work we relax this requirement, and allow for the subproblem to be solved \emph{inexactly}, leading to an inexact block coordinate descent method. Our approach incorporates the best known results for exact updates  as a special case. Moreover, these theoretical guarantees are complemented by practical considerations: the use of iterative techniques to determine the update as well as the use of preconditioning for further acceleration.

\bigskip
\textbf{Keywords:} inexact methods,  block coordinate descent,  convex optimization, iteration complexity, preconditioning, conjugate gradients.

\bigskip
\textbf{AMS:} 65F08; 65F10; 65F15; 65Y20; 68Q25; 90C25
\end{abstract}

\section{Introduction}
Due to a dramatic increase in the size of optimization problems being encountered, first order  methods are becoming increasingly popular. These large-scale problems are often highly structured and it is important for any optimization method to take advantage of the underlying structure. Applications where such problems arise and where first order methods have proved successful include machine learning \cite{Machart12,Schmidt11}, compressive sensing \cite{Donoho06,Wright09}, group lasso \cite{Qin10,Simon11}, matrix completion \cite{Candes12,Recht11}, and truss topology design \cite{Richtarik11a}.

Block coordinate descent methods seem a natural choice for these very large-scale problems due to  their low memory requirements and low per-iteration computational cost. Furthermore, they are often designed to take advantage of the underlying structure of the optimization problem \cite{Tseng09,Wright12} and many of these algorithms are supported by high probability iteration complexity results \cite{Nesterov07,Nesterov12,Richtarik11a,Richtarik12,Richtarik13}.

\subsection{The Problem}
If the block size is larger than one, determining the update to use at a particular iteration in a block coordinate descent method can be computationally expensive. The purpose of this work is to reduce the cost of this step. To achieve this, we extend the work in \cite{Richtarik12} to include the case of an \emph{inexact} update.

In this work we study randomized block coordinate descent methods applied to the problem of minimizing a composite objective function. That is, a function formed as the sum of a smooth convex and a simple nonsmooth convex term:
\begin{equation}
     \label{F}
\min_{x \in \mathbb{R}^N} \{F(x) \eqdef f(x) + \Psi (x)\}.
\end{equation}
We assume that the problem has a minimum $(F^* > -\infty)$, $f$ has (block) coordinate Lipschitz gradient, and $\Psi$ is a (block) separable proper closed convex extended real valued function (all these concepts will be defined precisely in Section~\ref{Section_Preliminaries}).

Our algorithm (namely, the Inexact Coordinate Descent (ICD) method) is supported by high probability iteration complexity results. That is, for confidence level $\rho \in (0,1)$ and error tolerance $\epsilon >0$, we give an explicit expression for the number of iterations $k$ that guarantee that the ICD method produces a random iterate $x_k$ for which
\begin{equation*}
     \Prob(F(x_k)-F^* \leq \epsilon ) \geq 1-\rho.
\end{equation*}
We will show that in the inexact case it is not always possible to achieve a solution with small error and/or high confidence.

Our theoretical guarantees are complemented by practical considerations. In Section \ref{Section_alphabeta} we explain our inexactness condition in detail and in Section \ref{S_VerifyVi} we give examples to show when the inexactness condition is implementable. Further, in Section \ref{Section_Practical} we give several examples, derive the update subproblems, and suggest algorithms that could be used to solve the subproblems inexactly. Finally, we present some encouraging computational results.

\subsection{Literature Review}
As problem sizes increase, first order methods are benefiting from revived interest. On very large problems however, the computation of a single gradient step is expensive, and methods are needed that are able to make progress before a standard gradient algorithm takes a single step. For instance, a randomized variant of the Kaczmarz method for solving linear systems has recently been studied, equipped with iteration complexity bounds \cite{Needell10,Needell12,Leventhal10,Strohmer09}, and found to be surprisingly efficient.
%Further,  methods for solving the $\ell_1$-regularized problems arising in compressive sensing \cite{Becker11,Wright09}, and also methods for solving the group lasso problem \cite{Tibshirani96,Wu08,Yuan06}.
This method can be seen as a special case of a more general class of decomposition algorithms, block coordinate descent methods, which have recently gained much popularity \cite{Necoara12a,Nesterov07,Nesterov12,Richtarik12,Richtarik12a,Richtarik11,Tseng01}. One of the main differences between various (serial) coordinate descent schemes is the way in which the coordinate is chosen at each iteration. Traditionally cyclic schemes \cite{Saha10} and greedy schemes \cite{Richtarik11a} were studied. More recently, a popular alternative is to select coordinates randomly, because the coordinate can be selected cheaply, and useful iteration complexity results can be obtained \cite{Necoara12,Richtarik12,Richtarik12a,Richtarik11,Tao12,SSS2013}.

Another current trend in this area is to consider methods that incorporate some kind of `inexactness', perhaps using approximate gradients, or using inexact updates. For example, \cite{Necoara12b} considers methods based on inexact dual gradient information, while \cite{Schmidt11} considers the minimization of an unconstrained convex composite function where error is present in the gradient of the smooth term, or in the proximity operator for the non-smooth term. Other works study methods that use inexact updates when the objective function is convex, smooth and unconstrained \cite{Bento12}, smooth and constrained \cite{Bonettini11} or for $\ell_1$-regularized quadratic least squares problem \cite{Hua12}.

\subsection{Contribution} \label{Section_Comparison}

In this paper we extend the work of Richt\'{a}rik and Tak\'a\v{c} \cite{Richtarik12} and present a block coordinate descent method that employs inexact updates having the potential to reduce the overall algorithm running time. Furthermore, we focus in detail on the quadratic case, which benefits greatly from inexact updates, and show how preconditioning can be used to complement the inexact update strategy.

\begin{table}[h!]\centering
{
\begin{tabular}{|c| c| c| c|}
\hline
$F$ & Exact Method \cite{Richtarik12} & Inexact Method [this paper] & Theorem \\
\hline
& & & \\[-3mm]
C-N & $\displaystyle\frac{c_1}{\epsilon}\left(1+\log{\frac{1}{\rho}}\right) + 2$
& $\displaystyle\frac{c_1}{\epsilon-u} + \frac{c_1}{\epsilon  - \alpha c_1} \log \left( \frac{\epsilon - \frac{\beta c_1}{\epsilon - \alpha c_1}}{\epsilon\rho - \frac{\beta c_1}{\epsilon - \alpha c_1}}\right) + 2$ & \ref{Thm:C-N}(i)\\[5mm]
\cline{2-4}
& & & \\[-3mm]
C-N & $c_2 \log{\left(\displaystyle\frac{F(x_0)-F^*}{\epsilon \rho}\right)}$& $\displaystyle\frac{c_2}{1-\alpha c_2} \log \left(\frac{F(x_0)-F^* - \frac{\beta c_2}{1-\alpha c_2}}{\epsilon \rho - \frac{\beta c_2}{1-\alpha c_2}}\right)$ & \ref{Thm:C-N}(ii)\\[5mm]
\hline
& & & \\[-3mm]
SC-N & $\displaystyle\frac{n}{\mu} \log{\left(\frac{F(x_0) - F^*}{\epsilon \rho} \right)}$ & $ \displaystyle\frac{n}{\mu-\alpha n} \log{\left(\frac{F(x_0) - F^* - \frac{\beta n}{\mu-\alpha n}}{\epsilon \rho - \frac{\beta n}{\mu-\alpha n}} \right)}$ & \ref{Thm_ICD_strong} \\[5mm]
\hline
& & & \\[-3mm]
C-S & $\displaystyle\frac{\hat{c}_1}{\epsilon}\left(1+\log{\frac{1}{\rho}}\right) + 2$ & $\displaystyle\frac{\hat{c}_1}{\epsilon - \hat{u}} + \frac{\hat{c}_1}{\epsilon  - \alpha \hat{c}_1} \log \left( \frac{\epsilon - \frac{\beta \hat{c}_1}{\epsilon - \alpha \hat{c}_1}}{\epsilon\rho - \frac{\beta \hat{c}_1}{\epsilon - \alpha \hat{c}_1}}\right) + 2$ & \ref{Thm:C-S} \\[5mm]
\hline
& & & \\[-3mm]
SC-S & $\displaystyle\frac{1}{\mu_f}\log\left(\frac{f(x_0)-f^*}{\epsilon \rho}\right)$ &  \par$\displaystyle\frac{1}{\mu_f - \alpha}\log\left(\frac{f(x_0)-f^* - \frac{\beta}{\mu_f - \alpha}}{\epsilon \rho- \frac{\beta}{\mu_f -\alpha}}\right)$ & \ref{Thm:SC-S}\\[5mm]
\hline
\end{tabular}
}
\caption{Comparison of the iteration complexity results for coordinate descent methods using an inexact update and using an exact update (C=Convex, SC=Strongly Convex, N=Nonsmooth, S = Smooth).}\vspace{-5mm}
\label{Table_Comparison}
\end{table}

Table~\ref{Table_Comparison} compares \emph{some} of the new complexity results obtained in this paper for an inexact update with the complexity results for an exact update presented in \cite{Richtarik12}. The following notation is used in the table: by $\mu_\phi$ we denote the strong convexity parameter of function $\phi$ (with respect to a certain norm specified later), $\mu = (\mu_f + \mu_\Psi)/(1 + \mu_\Psi)$ and ${\cal R}_{w}(x_0)$ can be roughly considered to be distance from $x_0$ to a solution of \eqref{F} measured in a specific weighted norm parameterized by the vector $w$ (to be defined precisely in \eqref{eq_R}). The constants are $c_1 = 2n \max\{{\cal R}_{w}^2(x_0),F(x_0)-F^*\}, \hat{c}_1 = 2{\cal R}_{w}^2(x_0)$ and $c_2 = 2n{\cal R}_{w}^2(x_0)/\epsilon$, and $n$ is the number of blocks. Parameters $\alpha, \beta \geq 0$ control the level of inexactness (to be defined precisely in Section \ref{S_delta}) and $u$ and $\hat{u}$ are constants depending on $\alpha$, $\beta$ and $c_1$, and $\alpha$, $\beta$ and $\hat{c}_1$, respectively.

Table~\ref{Table_Comparison} shows that for fixed $\epsilon$ and $\rho$, an inexact method will require more iterations than an exact one. However, it is expected that in certain situations an inexact update will be significantly cheaper to compute than an exact update, leading to better overall running time.
Moreover, the new complexity results for the inexact method generalize those for the exact method. Specifically, for  inexactness parameters  $\alpha = \beta = 0$ we recover the complexity results in \cite{Richtarik12}.

\subsection{Outline} The first part of this paper focuses on the theoretical aspects of a block coordinate descent method when an inexact update is employed. In Section~\ref{Section_Preliminaries} the assumptions and notation are laid out and in Section~\ref{Section_Algorithms} the ICD method is presented. In Section~\ref{Section_CC} iteration complexity results for ICD applied to \eqref{F} are presented in both the convex and strongly convex cases. Iteration complexity results for ICD applied to a convex smooth minimization problem ($\Psi = 0$ in \eqref{F}) are presented in Section~\ref{Section_CS}, in both the convex and strongly convex cases.

The second part of the paper considers the practicality of an inexact update. Section~\ref{Section_Practical} provides several examples of how to derive the formulation for the update step subproblem, as well as giving suggestions for algorithms that can be used to solve the subproblem inexactly. Numerical experiments are presented in Section~\ref{Section_Numerical} and Appendix~\ref{Section_Eigenvalues} provides a detailed analysis of the spectrum of the preconditioned matrix used in the numerical experiments.

\section{Assumptions and Notation}
\label{Section_Preliminaries}
In this section we introduce the notation and definitions that are used throughout the paper.

\subsection{Block structure of $\R^N$}
\label{S_Block_structure}

The problem under consideration is assumed to have block structure and this is modelled by decomposing the space $\mathbb{R}^N$ into $n$ subspaces as follows. Let $U \in \mathbb{R}^{N \times  N}$ be a column permutation of the $N \times N$ identity matrix and further let $U = [U_1,U_2,\dots,U_n]$ be a decomposition of $U$ into $n$ submatrices, where $U_i$ is $N \times N_i$ and $\sum_{i=1}^n N_i = N$. It is clear (e.g., see \cite{Richtarik12a} for a brief proof) that any vector $x \in \mathbb{R}^N$ can be written uniquely as
\begin{equation}\label{eq:decomposition}x = \sum_{i=1}^n U_ix^{(i)},\end{equation} where $x^{(i)} \in \mathbb{R}^{N_i}$. Moreover, these vectors are given by
\begin{equation} \label{U_i} x^{(i)} \eqdef U_i^Tx.\end{equation}
For simplicity we will sometimes write $x = (x^{(1)},x^{(2)},\dots,x^{(n)})$ instead of \eqref{eq:decomposition}. We equip $\mathbb{R}^{N_i}$ with a pair of conjugate norms, induced by a quadratic form involving a symmetric positive definite matrix $B_i$:
\begin{eqnarray}
\label{S2_Norm_def}
  \|t\|_{(i)} \eqdef \langle B_it,t \rangle^{\frac{1}{2}}, \qquad \|t\|_{(i)}^* = \langle B_i^{-1}t,t \rangle^{\frac{1}{2}}, \qquad t \in \mathbb{R}^{N_i},
\end{eqnarray}
where $\langle \cdot,\cdot \rangle$ is the standard Euclidean dot product.

\subsection{Smoothness of $f$}
Throughout this paper we assume that the gradient of $f$ is block  Lipschitz, uniformly in $x$, with positive constants $\Lip_1,\dots,\Lip_n$. This means that, for all $x \in \R^N$, $i\in \{ 1,2,\dots,n\}$ and $t \in \R^{N_i}$ we have
\begin{equation}
\label{S2_Lipschitz}
     \| \nabla_i f(x + U_i t) - \nabla_i f(x) \|_{(i)}^* \leq \Lip_i \|t\|_{(i)},
\end{equation}
where
\begin{equation}
\label{block_gradient}
     \nabla_i f(x) \eqdef (\nabla f(x))^{(i)} \overset{\eqref{U_i}}{=} U_i^T\nabla f(x) \in \R^{N_i}.
\end{equation}
An important consequence of \eqref{S2_Lipschitz} is the following standard inequality \cite[p.57]{Nesterov04}:
\begin{equation}
\label{S2_upperbound}
     f(x+U_i t) \leq f(x) + \langle \nabla_i f(x), t\rangle+ \tfrac{\Lip_i}{2}\|t\|_{(i)}^2.
\end{equation}

\subsection{Block separability of $\Psi$}
The function $\Psi:\R^N \to \R\cup \{+\infty\}$ is assumed to be block separable. That is, we assume that it can be decomposed as:
\begin{equation}
     \label{S2_separable_psi}
\Psi(x) = \sum_{i=1}^n \Psi_i (x^{(i)}),
\end{equation}
where the functions $\Psi_i: \R^{N_i} \to \R\cup \{+\infty\}$ are convex and closed.

\subsection{Norms on $\R^N$}

For fixed positive scalars $w_1,w_2,\dots,w_n$, let $w = (w_1,\dots,w_n)$ and define a pair of conjugate norms in $\R^N$ by
\begin{equation}
\label{S_Norms_1}
     \|x\|_w^2 \eqdef  \sum_{i=1}^n w_i \|x^{(i)}\|^2_{(i)}, \quad   (\|y\|_w^*)^2 \eqdef \max_{\|x\|_w \leq 1} \langle y,x \rangle ^2 = \sum_{i=1}^n w_i^{-1} (\|y^{(i)}\|^*_{(i)})^2.
\end{equation}
In the subsequent analysis we will often use $w = \Lip$ (for $\Psi\neq 0$) and/or $w = \Lip  p^{-1}$ (for $\Psi=0$), where $\Lip = (\Lip_1,\dots,\Lip_n)$ is a vector of Lipschitz constants, $p = (p_1,\dots,p_n)$ is a vector of positive probabilities and $\Lip p^{-1}$ denotes the vector $(\Lip_1/p_1,\dots, \Lip_n/p_n)$.

\subsection{Strong convexity of $F$}
\label{S_StrongConvex}
A function $\phi: \R^N \to \R \cup \{+ \infty\}$ is strongly convex w.r.t.\ $\| \cdot \|_w$ with convexity parameter $\mu_{\phi}(w) > 0$ if for all $x,y \in \dom \phi$,
\begin{equation}
\label{strongly_convex_1}
     \phi(y) \geq \phi(x) + \langle \phi^{\prime}(x),y-x \rangle + \tfrac{\mu_{\phi}(w)}{2}\|y-x\|_w^2,
\end{equation}
where $\phi^{\prime}$ is any subgradient of $\phi$ at $x$. The case with $\mu_{\phi}(w)= 0$ reduces to convexity.

In some of the results presented in this work we assume that $F$ is strongly convex. Strong convexity of $F$ may come from $f$ or $\Psi$ or both and we will write $\mu_f(w)$ (resp. $\mu_{\Psi}(w)$) for the strong convexity parameter of $f$ (resp. $\Psi$), with respect to $\| \cdot \|_w$. Following from \eqref{strongly_convex_1}
\begin{equation}
     \label{strongly_convex_4}
\mu_F(w) \geq \mu_f(w) + \mu_\Psi(w).
\end{equation}
Using \eqref{S2_upperbound} and \eqref{strongly_convex_1} it can be shown that
\begin{equation}
\label{eq_lipp1}
	\mu_f(\Lip)\leq 1, \quad \text{and} \quad \mu_f(\Lip  p^{-1}) < 1.
\end{equation}
We will also make use of the following characterisation of strong convexity. For all $x,y \in \dom\phi$ and $\lambda \in [0,1]$,
\begin{equation}
\label{strongly_convex_3}
     \phi\big(\lambda x + (1-\lambda) y \big) \; \leq \; \lambda \phi(x) + (1-\lambda)\phi(y) - \tfrac{\mu_\phi(w)\lambda(1-\lambda)}{2}\|x-y\|_w^2.
\end{equation}

\subsection{Level set radius}
The set of optimal solutions of \eqref{F} is denoted by $X^*$ and $x^*$ is any element of that set. We define
\begin{equation}
\label{eq_R}
     \mathcal{R}_w(x) \eqdef \max_y \max_{x^* \in X^*} \{ \|y - x^*\|_w : F(y) \leq F(x) \},
\end{equation}
which is a measure of the size of the level set of $F$ given by $x$. We assume that $\mathcal{R}_w(x_0)$ is finite for the initial iterate $x_0$.

\section{The Algorithm}
\label{Section_Algorithms}
Let us start by presenting the algorithm; a more detailed description will follow.
\begin{algorithm}[H]
\caption{ICD: Inexact Coordinate Descent}\label{ICD}
\begin{algorithmic}[1]
\STATE Input: Inexactness parameters $\alpha,\beta \geq 0$, and probabilities $p_1,\dots,p_n>0$.
\FOR{$k = 0,1,2,\dots$}
\STATE Choose $\delta_k = (\delta_k^{(1)},\dots,\delta_k^{(n)}) \in \R^n$ according to \eqref{eq:alpha-beta}
\STATE Choose block $i \in \{1,2,\dots,n\}$ with probability $p_i$
\STATE Compute the inexact update $T^{(i)}_{\delta_k}(x_k)$ to block $i$ of $x_k$
\STATE Update block $i$ of $x_k$: $x_{k+1} = x_k + U_i T^{(i)}_{\delta_k}(x_k)$
\ENDFOR
\end{algorithmic}
\end{algorithm}

\subsection{Generic description}

Given iterate $x_k \in \R^N$, Algorithm~\ref{ICD} picks block $i \in \{1,2,\dots,n\}$ with probability $p_i$, computes the update vector $T^{(i)}_{\delta_k}(x_k)\in \R^{N_i}$ (we comment on how this is computed later in this section) and then adds it to the $i$th block of $x_k$, producing the new iterate $x_{k+1}$. The iterates $\{x_k\}$ are random vectors and the values $\{F(x_k)\}$ are random variables. The update vector depends on $x_k$, the current iterate, and on $\delta_k$, a vector of parameters controlling the ``level of inexactness'' with which the update is computed. The rest of this section is devoted to giving a precise definition of $T^{(i)}_{\delta_k}(x_k)$. Note that from \eqref{F} and \eqref{S2_upperbound} we have, for all $x\in \R^N$, $i \in \{1,2,\dots,n\}$ and $t\in \R^{N_i}$:
\begin{equation}
\label{S2_upperbound_F}
     F(x + U_it) = f(x+U_it) + \Psi(x+U_i t) \leq f(x) + V_i(x,t) +  \Psi_{-i}(x),
\end{equation}
where
\begin{equation}
     \label{Vi}
  V_i(x,t) \eqdef \langle \nabla_i f(x),t \rangle + \tfrac{\Lip_i}{2}\|t\|_{(i)}^2 + \Psi_i(x^{(i)}+t),
\end{equation}
\begin{equation}
\label{psi}
     \Psi_{-i}(x) \eqdef \sum_{j \neq i} \Psi_j(x^{(j)}).
\end{equation}
That is, \eqref{S2_upperbound_F} gives an upper bound on $F(x+U_i t)$, viewed as a function of $t \in\R^{N_i}$.

The inexact update computed in Step 5 of Algorithm \ref{ICD} is the \emph{inexact} minimizer of the upper bound \eqref{S2_upperbound_F} on $F(x_k+U_it)$ (to be defined precisely below). However, since only the second term of this bound depends on $t$, the update is computed by minimizing, \emph{inexactly}, $V_i(x,t)$ in $t$.

\subsection{Inexact update}
\label{S_delta}
The approach of this paper best applies to situations in which it is much easier to approximately minimize $t\mapsto V_i(x,t)$ than to either
(i) approximately minimize $t\mapsto F(x+ U_i t)$ and/or (ii) exactly minimize $t \mapsto V_i(x,t)$.
For $x\in \R^N$ and $\delta = (\delta^{(1)},\dots, \delta^{(n)}) \geq 0$
we define $T_{\delta}(x) \eqdef (T_{\delta}^{(1)}(x),\dots, T_{\delta}^{(n)}(x))\in \R^N$ to be any vector satisfying
\begin{equation}
\label{Td_def}
     V_i(x,T_{\delta}^{(i)}(x)) \leq  \min \left\{V_i(x,0), \delta^{(i)}+ \min_{t \in \R^{N_i}} V_i(x,t)\right\}, \quad  i = 1,\dots,n.
\end{equation}
(We allow here for an abuse of notation --- $\delta^{(i)}$ is a scalar, rather than a vector in $\R^{N_i}$ as $x^{(i)}$ for $x\in \R^N$ --- because we wish to  emphasize that the scalar $\delta^{(i)}$ is associated with the $i$th block.)
That is, we require that the inexact update $T_\delta^{(i)}(x)$ of the $i$th block of $x$ is (i) no worse than a vacuous update, and that it is (ii) close to the optimal update $T_0^{(i)}(x)=\arg \min_{t} V_i(x,t)$, where the degree of suboptimality/inexactness is bounded by $\delta^{(i)}$.

As the following lemma shows, the update \eqref{Td_def} leads to a monotonic algorithm.

\begin{lemma}  For all $x\in \R^N, \delta \in \R^n_+$ and $i\in \{1,2,\dots,n\}$,
\begin{equation}\label{eq:111}F(x + U_i T_{\delta}^{(i)}(x)) \leq F(x).\end{equation}
\end{lemma}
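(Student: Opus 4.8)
The plan is to chain together the upper bound \eqref{S2_upperbound_F}, the defining inequality \eqref{Td_def} for the inexact update, and the trivial observation that $V_i(x,0) = f$-free residual equals $\Psi_i(x^{(i)})$. First I would write down \eqref{S2_upperbound_F} with $t = T_\delta^{(i)}(x)$, which gives
\begin{equation*}
F(x + U_i T_\delta^{(i)}(x)) \leq f(x) + V_i(x,T_\delta^{(i)}(x)) + \Psi_{-i}(x).
\end{equation*}
Then I would invoke the first branch of the minimum in \eqref{Td_def}, namely $V_i(x,T_\delta^{(i)}(x)) \leq V_i(x,0)$, to replace the middle term.

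Next I would compute $V_i(x,0)$ explicitly from \eqref{Vi}: since $\langle \nabla_i f(x), 0\rangle = 0$ and $\|0\|_{(i)}^2 = 0$, we get $V_i(x,0) = \Psi_i(x^{(i)})$. Substituting this back yields
\begin{equation*}
F(x + U_i T_\delta^{(i)}(x)) \leq f(x) + \Psi_i(x^{(i)}) + \Psi_{-i}(x) = f(x) + \sum_{j=1}^n \Psi_j(x^{(j)}) = f(x) + \Psi(x) = F(x),
\end{equation*}
using the block separability \eqref{S2_separable_psi} and the definition \eqref{psi} of $\Psi_{-i}$ together with \eqref{F}. That completes the argument.

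I do not anticipate any genuine obstacle here; the statement is essentially immediate once one notices that the inexactness condition \eqref{Td_def} was deliberately written with the ``no worse than a vacuous update'' clause $V_i(x,0)$ precisely so that monotonicity holds regardless of how large $\delta^{(i)}$ is. The only point requiring a little care is bookkeeping the non-active blocks: one must be sure that the $j \neq i$ terms of $\Psi$ are untouched by the update $x \mapsto x + U_i T_\delta^{(i)}(x)$, which is exactly what \eqref{psi} and the uniqueness of the decomposition \eqref{eq:decomposition} guarantee. No strong convexity, Lipschitz continuity beyond \eqref{S2_upperbound}, or probabilistic reasoning is needed — this is a deterministic, per-iteration inequality.
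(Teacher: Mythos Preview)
Your proposal is correct and follows exactly the same chain of inequalities as the paper: apply \eqref{S2_upperbound_F} with $t=T_\delta^{(i)}(x)$, use the $V_i(x,0)$ branch of \eqref{Td_def}, and recognize $f(x)+V_i(x,0)+\Psi_{-i}(x)=F(x)$ via \eqref{Vi} and \eqref{psi}. Your write-up merely spells out the computation $V_i(x,0)=\Psi_i(x^{(i)})$ more explicitly than the paper does.
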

{\em Proof:}
\begin{eqnarray*}
F(x+ U_i T_{\delta}^{(i)}(x)) &\overset{\eqref{S2_upperbound_F}}{\leq}& f(x) + V_i(x, T_\delta^{(i)}(x)) + \Psi_{-i}(x)\\
&\overset{\eqref{Td_def}}{\leq}& f(x) + V_i(x, 0) + \Psi_{-i}(x) \overset{\eqref{Vi} + \eqref{psi}}{=}  F(x).
\qquad \qed
\end{eqnarray*}

Furthermore, in this work we provide iteration complexity results for ICD, where $\delta_k=(\delta_k^{(1)},\dots,\delta_k^{(n)})$ is chosen in such a way that the expected suboptimality is bounded above by a linear function of the residual $F(x_k)-F^*$. That is, we have the following assumption.
\begin{assumption}\label{assumption}
  For constants $\alpha,\beta \geq 0$, the vector $\delta_k= (\delta_k^{(1)},\dots, \delta_k^{(n)})$ is chosen to satisfy
  \begin{equation}\label{eq:alpha-beta}\bar{\delta}_k \eqdef \sum_{i=1}^n p_i \delta_k^{(i)} \leq \alpha (F(x_k)-F^*) + \beta,\end{equation}
\end{assumption}
Notice that, for instance, Assumption \ref{assumption} holds if we require $\delta_k^{(i)}\leq \alpha (F(x_k)-F^*)+ \beta$ for all blocks $i$ and iterations $k$.

The motivation for allowing inexact updates of the form \eqref{Td_def} is that calculating exact updates is impossible in some cases (for example, not all problems have a closed form solution), and computationally intractable in others. The purpose of allowing inexactness in the update step of ICD is that an \emph{iterative method} can be used to solve for the update $T_{\delta}^{(i)}(x)$, thus significantly expanding the range of problems that can be successfully tackled by coordinate descent. In this case, there is an outer coordinate descent loop, and an inner iterative loop to determine the update. Assumption \ref{assumption} shows that the stopping tolerance on the inner loop, must be bounded above via  \eqref{eq:alpha-beta}.

CD methods provide a mechanism to break up very large/huge scale problem, into smaller pieces that are a fraction of the total dimension. Moreover, often the subproblems that arise to solve for the update have a similar/the same form as the original huge scale problem. (For example, see the numerical experiments in Section \ref{Section_Numerical}, and the examples given in Section \ref{Section_Practical}.) There are many iterative methods that cannot scale up to the original huge dimensional problem, but are excellent at solving the medium scale update subproblems. ICD allows these algorithms to solve for the update at each iteration, and if the updates are solved efficiently, then the overall ICD algorithm running time is kept low.

\subsection{The role of $\alpha$ and $\beta$ in ICD}
\label{Section_alphabeta}
The condition \eqref{Td_def} shows that the updates in ICD are \emph{inexact}, while Assumption \ref{assumption} gives the \emph{level of inexactness} that is allowed in the computed update. Moreover, Assumption \ref{assumption} allows us to provide a unified analysis; formulating the error/inexactness expression in this general way \eqref{eq:alpha-beta} gives insight into the role of both \emph{multiplicative and additive error}, and \emph{how this error propagates} through the algorithm as iterates progress.

Formulation \eqref{eq:alpha-beta} is interesting from a theoretical perspective because it allows us to present a \emph{sensitivity analysis} for ICD, which is interesting in its own right. However, we stress that \eqref{Td_def}, coupled with Assumption \ref{assumption}, is much more than just a technical tool; $\alpha$ and $\beta$ are actually parameters of the ICD algorithm (Algorithm \ref{ICD}) that can be assigned \emph{explicit numerical values} in many cases.

We now explain \eqref{Td_def}, Assumption \ref{assumption} and the role of parameters $\alpha$ and $\beta$ in slightly more detail. (Note that $\alpha$ and $\beta$ must be chosen sufficiently small to guarantee converge of the ICD algorithm. However, we postpone discussion of the magnitude of $\alpha$ and $\beta$ until Section \ref{S_technicalresult}.) There are four cases.
\begin{enumerate}
\item \textbf{Case I: $\alpha = \beta = 0$.} This corresponds to the \emph{exact case} where \emph{no error} is allowed in the computed update.
  \item \textbf{Case II: $\alpha = 0, \beta >0$.} This case corresponds to additive error only, where the error level $\beta>0$ is fixed at the start of Algorithm \ref{ICD}. In this case, \eqref{Td_def} and \eqref{eq:alpha-beta} show that the error allowed in the inexact update $T_{\delta}^{(i)}(x_k)$ is \emph{on average $\beta$}. For example, one can set $\delta_k\ii = \beta$, for all blocks $i$ and all iterations $k$ so that \eqref{eq:alpha-beta} becomes $\bar{\delta}_k = \sum_i p_i \beta = \beta$. Notice that the tolerance allowable on each block need not be the same; if one sets $\delta_k\ii \leq \beta$, for all blocks $i$ and iterates $k$ then $\bar{\delta}_k \leq \beta$, so \eqref{eq:alpha-beta} holds true. Moreover, one need not set $\delta_k\ii >0$ for all $i$, so that the update vector $T_{\delta}^{(i)}(x_k)$ could be exact for some blocks ($\delta_k\ii =0$), and inexact for others ($\delta_k^{(j)} >0$). (This may be sensible, for example, when $\Psi_i(x\ii) \neq \Psi_j(x^{(j)})$ for some $i\neq j$ and that \eqref{Vi} has a closed form solution for $T\ii(x_k)$ but not for $T^{(j)}(x_k)$.) Furthermore, consider the extreme case where only one block update is inexact $T_{\delta}\ii(x_k)$, ($T_{0}^{(j)}(x_k)$ for all $j\neq i$). If the coordinates are selected with uniform probability, then the inexactness level on block $i$ can be as large as $\delta_k\ii = n \beta$ and Assumption \ref{assumption} holds.
  \item \textbf{Case III: $\alpha > 0, \beta = 0$.} In this case only multiplicative error is allowed in the computed update $T_{\delta}\ii(x_k)$, where the error allowed in the update at iteration $k$ is related to the error in the function value ($F(x_k)-F^*$). The multiplicative error level $\alpha$ is fixed at the start of Algorithm \ref{ICD}, and $\alpha(F(x_k) - F^*)$ is an upper bound on the average error in the update $T_{\delta}^{(i)}(x_k)$ over all blocks $i$ at iteration $k$. In particular, notice that setting $\delta_k\ii \leq \alpha (F(x_k)-F^*)$, for all $i$ and $k$ satisfies Assumption \ref{assumption}. As for Case II, one is allowed to set $\delta_k\ii = 0$ for some block(s) $i$, or to set $\delta_k\ii > \alpha (F(x_k)-F^*)$ for some blocks $i$ and iterations $k$ as long as Assumption \ref{assumption} is satisfied.
  \item \textbf{Case IV: $\alpha > 0, \beta > 0$.} This is the most general case, corresponding to the inclusion of both multiplicative and additive error, where the error level parameters $\alpha$ and $\beta$ are fixed at the start of Algorithm \ref{ICD}. Notice that Assumption \ref{assumption} is satisfied when the error $\delta_k\ii$ in the computed update $T_{\delta}^{(i)}(x_k)$ obeys $\delta_k\ii \leq \alpha(F(x_k)-F^*)+\beta$. Moreover, as for Cases II and III, one is allowed to set $\delta_k\ii = 0$ for some block(s) $i$, or to set $\delta_k\ii > \alpha (F(x_k)-F^*)$ for some blocks $i$ and iterations $k$ as long as Assumption \ref{assumption} is satisfied. Notice that, as iterations progress, the multiplicative error $\alpha$ may become dominated by the additive error $\beta$, in the sense that, $\alpha(F(x_k)-F^*) \to 0$ as $k \to \infty$ so the upper bound on $\bar{\delta_k}$ tends to $\beta$.
      \end{enumerate}
%\begin{remark}
%  \begin{enumerate}
%    \item closed form solution
%    \item far from actual solution, why solve subproblems accurately
%    \item deltas can change at each iteration, so even if have additive error, could reduce this as we get closer to actual solution and thereby hope for a more accurate solution.
%    \item can have delta=0 for some blocks.
%    \item can have bigger error on some blocks.
%    \item for multiplicative error, need to know $(F(x_k)-F^*)$
%    \item In all cases, need to know $V_i(x,T^*)$.
%  \end{enumerate}
%\end{remark}
%The cases above explain what the parameters $\alpha$ and $\beta$ are, and how they relate to $\delta_k\ii$ and $\bar{\delta}_k$. We now give some practical motivation for why inexact updates within a coordinate descent scheme are important.

Cases I--IV above show that the parameters $\alpha$ and $\beta$ directly relate to the stopping criterion used in the algorithm employed to solve for the update $T_{\delta}^{(i)}(x_k)$ at each iteration of ICD. The following section gives examples of algorithms that can be used within ICD, where $\alpha$ and $\beta$ can be given explicit numerical values and the stopping tolerances are verifiable.

\subsection{Computing the inexact update}
\label{S_VerifyVi}
In this section we focus on the computation of the inexact update (Step 5 of Algorithm \ref{ICD}). We discuss several cases where it is possible to verify Assumption \ref{assumption}, and thus provide specific instances to show that ICD is indeed implementable.

In order to compute an inexact update, the quantity $V_i(x,T_0\ii(x_k))$ in \eqref{Td_def} is needed. Moreover, to incorporate multiplicative error, the optimal objective value $F^*$ must also be known.

\begin{enumerate}
\item \textbf{$F^*$ is known:} There are many instances when $F^*$ is known a priori, which means that the bound $F(x_k)-F^*$ is computable at every iteration, and subsequently multiplicative error can be incorporated into ICD. In most cases, the update subproblem \eqref{Td_def} has the same form as the original problem (see Section \ref{Section_Practical}), so that $V_i(x_k,T_0\ii)$ is also known. This is the case, for example, when solving a consistent system of equations (minimizing a quadratic function), where $F^*=0$, so that $F(x_k)-F^* = F(x_k)$. The update subproblem has the same form (a consistent system of equations must be solved to calculate $T_{\delta_k}\ii$), so we also have $V_i(x_k,T_0\ii) = 0$. Hence, for any $\alpha,\beta \geq 0$, one can compute $\delta_k\ii$ \eqref{eq:alpha-beta}, that satisfies Assumption \ref{assumption} and \eqref{Td_def}. That is, at each iteration of ICD, accept any inexact update $T_{\delta_k}\ii$ that satisfies:
    \begin{equation}
      \label{StoppingCondition_Fstar}
        V_i(x_k,T_{\delta_k}\ii) - V_i(x_k,T_0\ii) = V_i(x_k,T_{\delta_k}\ii) \leq \delta_k\ii \leq \alpha F(x_k) + \beta.
      \end{equation}
  \item \textbf{Primal-dual algorithm:} %A primal dual algorithm provides an upper bound on the distance of the objective value from optimality.
      If the update $T_{\delta_k}\ii$ is found using an algorithm that terminates on the duality gap, then \eqref{Td_def} and Assumption \ref{assumption} are easy to verify. In particular, suppose that $\alpha = 0$ and $\beta>0$ is fixed when initializing ICD. Then, for block $i$ and iteration $k$, we accept $T_{\delta_k}\ii$ such that
      \begin{equation}
      \label{StoppingCondition_Dual}
        V_i(x_k,T_{\delta_k}\ii)) - V_i(x_k,T_0\ii) \leq V_i(x_k,T_{\delta_k}\ii)) - V_i^{\text{DUAL}}(x_k,T_{\delta_k}\ii))\leq \delta_k\ii \leq \beta,
      \end{equation}
      where $V_i^{\text{DUAL}}(x_k,T_{\delta_k}\ii))$ is the value of the dual at the point $T_{\delta_k}\ii$.
%  \item \textbf{First order optimality conditions:} The first order optimality conditions are often used as a stopping condition in optimization algorithms. Notice that the first order optimality conditions can be written as a system of equations, where the optimal residual value of the system is known to be zero. This is another case where $V_i(x_k,T_0\ii)$ is known in advance, and subsequently, an algorithm that uses the residual of the first order optimality conditions as a termination criteria, can be employed to solve for the update within the ICD method.
\end{enumerate}
%In general, we require that $T_{\delta_k}\ii$ satisfies
%\begin{equation}
%  V_i(x_k,T_{\delta_k}\ii) - \max\{\{V_i(x_k,T_0\ii), ?? \} =  \max\{\{\Psi(x\ii), ?? \}\leq \delta_k \leq \beta,
%\end{equation}
%which is verifiable for any $ \delta_k $.

\begin{remark}
  \begin{enumerate}
    \item Both \eqref{StoppingCondition_Fstar} and \eqref{StoppingCondition_Dual} are termination criteria for the iterative method used for the inner loop to determine the inexact update $T_{\delta}\ii(x_k)$. They show that the error bound is indeed \emph{implementable}, and that the inexactness parameters $\alpha$ and $\beta$ relate to the stopping tolerance used in the inner loop of ICD.
        \item Selecting an appropriate stopping criterion and tolerance (i.e., deciding upon numerical values for $\alpha$ and $\beta$) is a problem that frequently arises when solving optimization problems, and the decision is left to the discretion of the user.
        \item It may be possible to find other stopping conditions such that \eqref{Td_def} is verifiable. Moreover, it may be possible for the algorithm to converge \emph{in practice} if a stopping condition is used for which \eqref{Td_def} cannot be checked.
  \end{enumerate}
\end{remark}

\subsection{Technical result}
\label{S_technicalresult}
The following result plays a key role in the complexity analysis of ICD.

\begin{theorem}\label{Theorem1}
     Fix $x_0 \in \R^N$ and let $\{x_k\}_{k \geq 0}$ be a sequence of random vectors in $\R^N$ with $x_{k+1}$ depending on $x_k$ only. Let $\varphi:\R^N \to \R$ be a nonnegative function, define $\xi_k \eqdef \varphi(x_k)$ and assume that $\{\xi_k\}_{k \geq 0}$ is nonincreasing. Further, let $\rho \in (0,1)$, $\epsilon>0$ and $\alpha,\beta \geq 0$ be such that one of the following two conditions holds:
\begin{enumerate}
     \item[(i)]  $\mathbb{E}[\xi_{k+1}\;|\;x_k] \leq (1+\alpha) \xi_k - \tfrac{\xi_k^2}{c_1} + \beta$, for all $k\geq 0$, where
     $c_1>0$,\\ $\tfrac{c_1}{2}\left(\alpha + \sqrt{\alpha^2 + \tfrac{4\beta}{c_1 \rho}}\right) <  \epsilon < \min\{(1+\alpha)c_1,\xi_0\} $ and $\sigma \eqdef \sqrt{\alpha^2 + \tfrac{4\beta}{c_1}} < 1$;
\item[(ii)]  $\mathbb{E}[\xi_{k+1}\;|\;x_k] \leq \left(1 + \alpha - \tfrac{1}{c_2}\right) \xi_k + \beta,$ for all $k\geq 0$ for which $\xi_k \geq \epsilon$, \\ where $\alpha c_2 < 1\leq (1+\alpha)c_2$, and $\tfrac{\beta c_2}{\rho(1 - \alpha c_2)} < \epsilon < \xi_0$.
\end{enumerate}
If $(i)$ holds and we define $u \eqdef \tfrac{c_1}{2}(\alpha + \sigma)$ and choose
\begin{equation}
\label{Thm_Ki}
     K\geq \frac{c_1}{\epsilon  - \alpha c_1} \log \left( \frac{\epsilon - \frac{\beta c_1}{\epsilon - \alpha c_1}}{\epsilon\rho - \frac{\beta c_1}{\epsilon - \alpha c_1}}\right) + \min\left\{   \frac{1}{\sigma} \log \left(\frac{\xi_0-u}{\epsilon-u} \right) , \frac{c_1}{\epsilon-u} -\frac{c_1}{\xi_0-u}   \right\} + 2,
\end{equation}
(where the second term in the minimum is chosen if $\sigma=0$),
or if $(ii)$ holds and we choose
\begin{equation}
\label{Thm_Kii}
K \geq \frac{c_2}{1-\alpha c_2} \log \left(\frac{\xi_0 - \frac{\beta c_2}{1-\alpha c_2}}{\epsilon \rho - \frac{\beta c_2}{1-\alpha c_2}}\right),
\end{equation}
then $\Prob(\xi_{K}\leq \epsilon) \geq 1- \rho$.
\end{theorem}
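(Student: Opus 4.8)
The plan is to prove both parts with a common two–stage template: first control the \emph{expectation} $\E[\xi_k]$, driving it down into the interval $(\,\cdot\,,\epsilon]$, and only then convert this into the high–probability statement via Markov's inequality combined with a geometrically decaying tail. Before doing anything I would record the algebraic facts that make the stated iteration counts meaningful. For part (i): since $\sigma\geq\alpha$ one has $\alpha c_1\leq u\eqdef\tfrac{c_1}{2}(\alpha+\sigma)<\epsilon<(1+\alpha)c_1$; the map $g(a)\eqdef(1+\alpha)a-\tfrac{a^2}{c_1}+\beta$ factors as $g(a)-a=-\tfrac1{c_1}(a-u)(a-u')$ with $u'\eqdef\tfrac{c_1}{2}(\alpha-\sigma)\leq 0$, so $u$ is its unique nonnegative fixed point and, by a one–line computation, $g(a)-u=(a-u)\bigl(1-\sigma-\tfrac{a-u}{c_1}\bigr)$; and squaring the hypothesis $\tfrac{c_1}{2}\bigl(\alpha+\sqrt{\alpha^2+\tfrac{4\beta}{c_1\rho}}\bigr)<\epsilon$ rearranges to $\gamma'\eqdef\tfrac{\beta c_1}{\epsilon-\alpha c_1}<\epsilon\rho$, which is exactly what makes the logarithms in \eqref{Thm_Ki} well defined with positive argument. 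For part (ii): writing $\theta\eqdef 1+\alpha-\tfrac1{c_2}$ one has $\theta\in[0,1)$ from $\alpha c_2<1\leq(1+\alpha)c_2$, and the hypothesis $\tfrac{\beta c_2}{\rho(1-\alpha c_2)}<\epsilon$ says precisely that the fixed point $\gamma\eqdef\tfrac{\beta}{1-\theta}=\tfrac{\beta c_2}{1-\alpha c_2}$ of $a\mapsto\theta a+\beta$ satisfies $\gamma<\epsilon\rho$.

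\noindent\textbf{Part (ii).} Here no first stage is needed. I set $S_k\eqdef\E[\xi_k\mathbf{1}_{\xi_k\geq\epsilon}]$ and $p_k\eqdef\Prob(\xi_k\geq\epsilon)$. Since $\{\xi_k\}$ is nonincreasing, $\{\xi_{k+1}\geq\epsilon\}\subseteq\{\xi_k\geq\epsilon\}$, hence $\xi_{k+1}\mathbf{1}_{\xi_{k+1}\geq\epsilon}\leq\xi_{k+1}\mathbf{1}_{\xi_k\geq\epsilon}$; taking conditional expectation, using that $\mathbf{1}_{\xi_k\geq\epsilon}$ is $x_k$–measurable, and invoking the hypothesis of (ii) (which applies exactly on $\{\xi_k\geq\epsilon\}$), I obtain $S_{k+1}\leq\theta S_k+\beta p_k\leq\theta S_k+\beta$. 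Subtracting the fixed point yields $S_{k+1}-\gamma\leq\theta(S_k-\gamma)$, so $S_k-\gamma\leq\theta^k(S_0-\gamma)\leq\theta^k(\xi_0-\gamma)$ (as $S_0\leq\xi_0$). Markov's inequality then gives $\Prob(\xi_K\geq\epsilon)\leq S_K/\epsilon\leq\bigl(\theta^K(\xi_0-\gamma)+\gamma\bigr)/\epsilon$, which is $\leq\rho$ as soon as $\theta^K(\xi_0-\gamma)\leq\epsilon\rho-\gamma$, i.e. $K\log(1/\theta)\geq\log\tfrac{\xi_0-\gamma}{\epsilon\rho-\gamma}$; since $\log(1/\theta)\geq 1-\theta=\tfrac{1-\alpha c_2}{c_2}$, the choice \eqref{Thm_Kii} suffices, and by monotonicity $\Prob(\xi_K\leq\epsilon)\geq 1-\Prob(\xi_K\geq\epsilon)\geq 1-\rho$.

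\noindent\textbf{Part (i).} \emph{Stage 1 (expectation into the $\epsilon$–level).} Taking total expectations in the hypothesis of (i) and using Jensen ($\E[\xi_k^2]\geq(\E[\xi_k])^2$) gives $a_{k+1}\leq g(a_k)$ for $a_k\eqdef\E[\xi_k]$. With $b_k\eqdef a_k-u$, the identity for $g(a)-u$ above gives $b_{k+1}\leq b_k\bigl(1-\sigma-\tfrac{b_k}{c_1}\bigr)$ whenever $b_k\geq 0$; if $b_k<0$ at some point then $a_k<u<\epsilon$ and stage 1 is already done, and if $b_0\geq c_1$ then $b_1\leq 0$ for the same reason — so I may assume $\{b_k\}$ stays in $[0,c_1)$, where it is nonincreasing and obeys both $b_{k+1}\leq(1-\sigma)b_k$ and $b_{k+1}\leq b_k-\tfrac{b_k^2}{c_1}$. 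The first gives $b_k\leq(1-\sigma)^kb_0$; the second, via $\tfrac1{b_{k+1}}\geq\tfrac1{b_k}+\tfrac1{c_1}$, gives $b_k\leq\tfrac{c_1b_0}{c_1+kb_0}$. Hence $a_{K_1}\leq\epsilon$ once $K_1\geq\min\bigl\{\tfrac1\sigma\log\tfrac{\xi_0-u}{\epsilon-u},\ \tfrac{c_1}{\epsilon-u}-\tfrac{c_1}{\xi_0-u}\bigr\}$ (using $\log\tfrac1{1-\sigma}\geq\sigma$ for the first, and the second when $\sigma=0$, which forces $\alpha=\beta=0$, $u=0$) — the middle term of \eqref{Thm_Ki}. \emph{Stage 2 (confidence boost).} For $k\geq K_1$, on $\{\xi_k\geq\epsilon\}$ we have $\tfrac{\xi_k^2}{c_1}\geq\tfrac{\epsilon\xi_k}{c_1}$, so the hypothesis of (i) implies $\E[\xi_{k+1}\mid x_k]\leq\theta'\xi_k+\beta$ with $\theta'\eqdef 1+\alpha-\tfrac{\epsilon}{c_1}\in(0,1)$ and fixed point $\gamma'=\tfrac{\beta}{1-\theta'}=\tfrac{\beta c_1}{\epsilon-\alpha c_1}$. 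This is exactly the setting of part (ii) with $(\theta,\gamma)$ replaced by $(\theta',\gamma')$ and the process restarted at time $K_1$ with $S'_0\eqdef\E[\xi_{K_1}\mathbf{1}_{\xi_{K_1}\geq\epsilon}]\leq\E[\xi_{K_1}]\leq\epsilon$ in the role of $\xi_0$; repeating that argument gives $\Prob(\xi_{K_1+K_2}\geq\epsilon)\leq\bigl((\theta')^{K_2}(\epsilon-\gamma')+\gamma'\bigr)/\epsilon\leq\rho$ for $K_2\geq\tfrac{c_1}{\epsilon-\alpha c_1}\log\tfrac{\epsilon-\gamma'}{\epsilon\rho-\gamma'}$ (using $\log(1/\theta')\geq 1-\theta'=\tfrac{\epsilon-\alpha c_1}{c_1}$ and $\gamma'<\epsilon\rho$). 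Taking $K=K_1+K_2$ and adding at most $2$ for the two ceilings recovers \eqref{Thm_Ki}, and monotonicity again gives $\Prob(\xi_K\leq\epsilon)\geq 1-\rho$.

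\noindent\textbf{Anticipated main difficulty.} The real conceptual point — and the place most likely to need care — is the split into the two stages: the quadratic recursion of (i) by itself cannot deliver a high–probability bound for small $\epsilon$, so one must first spend roughly $\tfrac1\sigma$ (or $\tfrac{c_1}{\epsilon}$ when $\sigma=0$) iterations to bring the expectation down to $\epsilon$, and only afterwards — once $\xi_k\geq\epsilon$ forces a genuine linear contraction — can Markov plus the geometric tail buy the $\log(1/\rho)$ confidence factor. The remaining work is bookkeeping that must be kept honest: the monotonicity–based indicator swap $\mathbf{1}_{\xi_{k+1}\geq\epsilon}\leq\mathbf{1}_{\xi_k\geq\epsilon}$ under conditional expectation, the transient case where $\xi_0$ is large relative to $c_1$ (which, as noted, collapses the first stage), tracking the two fixed points $u$ and $\gamma'$, and checking that the three hypotheses on $\epsilon$ are precisely the conditions under which every logarithm appearing in \eqref{Thm_Ki}–\eqref{Thm_Kii} has a positive argument.
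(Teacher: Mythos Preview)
Your proposal is correct and follows essentially the same two–stage strategy as the paper: threshold via the indicator $\mathbf{1}_{\xi_k\geq\epsilon}$, apply Markov to $S_k=\E[\xi_k\mathbf{1}_{\xi_k\geq\epsilon}]$, and for part~(i) first drive the expectation below $\epsilon$ using the shifted quadratic recursion (with fixed point $u$) before switching to the linear contraction with rate $1+\alpha-\epsilon/c_1$ and fixed point $\gamma'=\beta c_1/(\epsilon-\alpha c_1)$. The only organizational difference is that the paper thresholds from the outset and works with $\theta_k=\E[\xi_k^\epsilon]$ in \emph{both} stages, whereas you run Stage~1 on the unthresholded $a_k=\E[\xi_k]$ and only threshold in Stage~2; since $S_{K_1}\leq a_{K_1}\leq\epsilon$ this yields the same bound, and your separate derivation of the two Stage~1 rates (geometric from $(1-\sigma)$, harmonic from $1/b_{k+1}\geq 1/b_k+1/c_1$) is a clean simplification of the paper's unified ``choose $r\in\{1,1/(1-\sigma)\}$'' argument.
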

\begin{proof}
First notice that the thresholded sequence $\{\xi_k^{\epsilon}\}_{k \geq 0}$ defined by
\begin{equation}
\label{xikeps}
     \xi_k^{\epsilon} =
\begin{cases}
0, & \text{if } \; \xi_k < \epsilon,\\
\xi_k, & \text{otherwise,}
\end{cases}
\end{equation}
satisfies $\xi_k^{\epsilon} > \epsilon \Leftrightarrow \xi_k > \epsilon$. Therefore, by  Markov's inequality, $\Prob(\xi_k > \epsilon) =\Prob(\xi_k^{\epsilon} > \epsilon) \leq \frac{\E[\xi_k^{\epsilon}]}{\epsilon}$. Letting $\theta_k \eqdef \E[\xi_k^{\epsilon}]$, it thus suffices to show that
\begin{equation}
\label{theta_prob_result}
     \theta_K \leq \epsilon \rho.
\end{equation}
(The rationale behind this ``thresholding trick'' is that the sequence $\E[\xi_k^\epsilon]$ decreases faster than $\E[\xi_k]$ and hence will reach $\epsilon \rho$ sooner.) Assume now that (i) holds. It can be shown (for example, see Theorem 1 of \cite{Richtarik12} for the case $\alpha=\beta = 0$) that
\begin{equation}
\label{relations}
     \E[\xi_{k+1}^{\epsilon}\;|\;x_k] \leq (1+\alpha)\xi_{k}^{\epsilon} - \tfrac{(\xi_{k}^{\epsilon})^2}{c_1} + \beta,
\quad \E[\xi_{k+1}^{\epsilon}\;|\;x_k] \leq \left(1 + \alpha - \tfrac{\epsilon}{c_1}\right)\xi_{k}^{\epsilon} + \beta.
\end{equation}

By taking expectations in \eqref{relations} (in $x_k$) and using Jensen's inequality, we obtain
\begin{eqnarray}
     \label{relations_1} \theta_{k+1} &\leq& (1+\alpha)\theta_k - \tfrac{\theta_k^2}{c_1} + \beta, \quad k\geq 0,\\
\label{relations_2}\theta_{k+1} &\leq& \Big(1+\alpha-\tfrac{\epsilon}{c_1}\Big)\theta_k + \beta, \quad k\geq 0.
\end{eqnarray}

Notice that \eqref{relations_1} is better than \eqref{relations_2} precisely when $\theta_k > \epsilon$. It is easy to see that the inequality $(1+\alpha)\theta_k - \tfrac{\theta_k^2}{c_1} + \beta \leq \theta_k$ holds if and only if $\theta_k \geq u$. In other words, \eqref{relations_1} leads to $\theta_{k+1}$ that is better than $\theta_k$  only for  $\theta_k \geq u$. We will now compute $k=k_1$ for which $u <\theta_k \leq \epsilon$. Inequality \eqref{relations_1} can be equivalently written as
\begin{equation}\label{eq:shift}\theta_{k+1} - u \leq (1-\sigma)(\theta_k - u) - \frac{(\theta_k-u)^2}{c_1}, \quad k \geq 0.
\end{equation}
where $\sigma<1$. Writing \eqref{relations_1} in the form \eqref{eq:shift} eliminates the constant term $\beta$, which allows us to provide a simple analysis. (Moreover, this ``shifted'' form leads to a better result; see the remarks after the Theorem for details.) Letting $\hat{\theta}_k \eqdef \theta_k -u$, by monotonicity we have $\hat{\theta}_{k+1}\hat{\theta}_k \leq \hat{\theta}_k^2$, whence
\begin{equation}
\label{Thm1_eqn}
     \frac{1-\sigma}{\hat{\theta}_{k+1}} - \frac{1}{\hat{\theta}_k} = \frac{(1-\sigma)\hat{\theta}_k - \hat{\theta}_{k+1}}{\hat{\theta}_{k+1}\hat{\theta}_k} \geq \frac{(1-\sigma)\hat{\theta}_k - \hat{\theta}_{k+1}}{\hat{\theta}_k^2} \overset{\eqref{eq:shift}}{\geq} \frac{1}{c_1}.
\end{equation}
If we choose $r\in \{1,\tfrac{1}{1-\sigma}\}$, then
\[ \frac{1}{\hat{\theta}_k}
\overset{\eqref{Thm1_eqn}}{\geq} r  \left(\frac{1}{\hat{\theta}_{k-1}} + \frac{1}{c_1}\right) \geq  r^k \frac{1}{\hat{\theta}_0} + \frac{1}{c_1} \sum_{j=1}^{k} r^j
= \begin{cases}r^k \left(\frac{1}{\xi_0-u} + \frac{1}{c_1\sigma}\right) - \frac{1}{c_1\sigma}, & r=\tfrac{1}{1-\sigma},\\
\frac{1}{\xi_0-u} + \frac{k}{c_1}, & r=1.\end{cases}
\]
In particular, using the above estimate with  $r=1$ \emph{and} $r=\tfrac{1}{1-\sigma}$ gives
\begin{equation}\label{eq:k_1-eps}\hat{\theta}_{k_1} \leq \epsilon - u \qquad (\text{and hence } \theta_{k_1} \leq \epsilon)
\end{equation}
for
\begin{equation}\label{eq:k_1x}k_1 := \min\left\{\left \lceil \log \left(\frac{\frac{1}{\epsilon-u} + \frac{1}{c_1\sigma}}{\frac{1}{\xi_0-u} + \frac{1}{c_1\sigma}}\right)/ \log \left(\frac{1}{1-\sigma}\right) \right\rceil, \left\lceil \frac{c_1}{\epsilon-u}  -\frac{c_1}{\xi_0-u}  \right\rceil \right\},\end{equation}
where the left term in \eqref{eq:k_1x} applies when $\sigma>0$ only.

Applying the inequalities (i)~$\lceil t \rceil \leq 1 + t$; (ii)~$\log(\tfrac{1}{1-t})\geq t$ (holds for $0<t<1$; we use the inverse version, which is surprisingly tight for small $t$); and (iii)~the fact that $t \mapsto \tfrac{C+t}{D+t}$ is decreasing on $[0,\infty)$ if $C\geq D > 0$, we arrive at the following bound

\begin{equation}\label{eq:k1}k_1 \geq 1 + \min\left\{   \frac{1}{\sigma} \log \left(\frac{\xi_0-u}{\epsilon-u} \right) ,   \frac{c_1}{\epsilon-u}  -\frac{c_1}{\xi_0-u} \right\}.\end{equation}

Letting $\gamma\eqdef 1-\tfrac{\epsilon - \alpha c_1}{c_1}$ (notice that $\gamma\in (0,1)$), for any $k_2\geq 0$ we have
\begin{eqnarray}
\label{xxx}
      \theta_{k_1 +  k_2}
      &\overset{\eqref{relations_2}}{\leq}& \gamma \theta_{ k_1 +  k_2 - 1} + \beta \leq    \gamma^{k_2} \theta_{k_1 } + \beta (\gamma^{ k_2 -1} +\gamma^{ k_2 -2}+ \cdots + 1) \notag \\
      &\overset{\eqref{eq:k_1-eps}}{\leq}& \gamma^{k_2} \epsilon + \beta\frac{1-\gamma^{k_2}}{1-\gamma} = \gamma^{k_2}\left(\epsilon - \frac{\beta}{1-\gamma}\right) + \frac{\beta}{1-\gamma}.\label{eq:0987hhh}
\end{eqnarray}
In \eqref{xxx}, notice that the second to last term can be made as small as we like (by taking $k_2$ large), but we can never force $\theta_{k_1 +  k_2} \leq \tfrac{\beta}{1-\gamma}$. Therefore, in order to establish \eqref{theta_prob_result}, we need to ensure that $\tfrac{\beta c_1}{\epsilon - \alpha c_1} < \epsilon\rho$. Rearranging this gives the condition $\tfrac{c_1}{2}(\alpha + \sqrt{\alpha^2 +\frac{4\beta}{c_1 \rho}}) < \epsilon$, which holds by assumption. Now we can find $k_2$ for which the right hand side in \eqref{eq:0987hhh} is at most $\epsilon \rho$:
\begin{equation}\label{eq:k2A}k_2 \eqdef \left \lceil \log \left(\frac{\epsilon - \tfrac{\beta}{1-\gamma}}{\epsilon \rho - \tfrac{\beta}{1-\gamma}}\right)/ \log \left(\frac{1}{\gamma} \right) \right \rceil
\leq 1+ \frac{c_1}{\epsilon  - \alpha c_1} \log \left( \frac{\epsilon - \frac{\beta c_1}{\epsilon - \alpha c_1}}{\epsilon\rho - \frac{\beta c_1}{\epsilon - \alpha c_1}}\right).
\end{equation}

In view of \eqref{theta_prob_result}, it is enough to take $K = k_1+k_2$ iterations. The expression in \eqref{Thm_Ki} is obtained by adding the upper bounds on $k_1$ and $k_2$ in \eqref{eq:k1}  and \eqref{eq:k2A}.

Now assume that property ($ii$) holds. By a similar argument as that leading to \eqref{relations}, we obtain
\begin{eqnarray*}
\label{Thm1_ii_solve}
     \theta_K \leq \left(1-\tfrac{1-\alpha c_2}{c_2}\right)\theta_{K-1} + \beta
&\leq& \left(1-\tfrac{1-\alpha c_2}{c_2}\right)^K\theta_0 + \beta\sum_{j=0}^{K-1}\left(1-\tfrac{1-\alpha c_2}{c_2}\right)^j\\
&\leq & \left(1-\tfrac{1-\alpha c_2}{c_2}\right)^K\left(\theta_0 - \tfrac{\beta c_2}{1-\alpha c_2}\right) + \tfrac{\beta c_2}{1-\alpha c_2} \overset{\eqref{Thm_Kii}}{\leq} \epsilon \rho.
\end{eqnarray*}
The proof follows by taking $K$ given by \eqref{Thm_Kii}.
\end{proof}

Let us now comment on several aspects of the above result:
\begin{enumerate}
\item \emph{Usage.} We will use Theorem~\ref{Theorem1} to finish the proofs of the complexity results in Section~\ref{Section_CC}; with $\xi_k = \varphi(x_k) := F(x_k)-F^*$, where $\{x_k\}$ is the random process generated by ICD.

\item \emph{Monotonicity and Nonnegativity.}  Note that the monotonicity assumption in Theorem~\ref{Theorem1} is for the choice of $x_k$ and $\varphi$ described in 1) satisfied due to \eqref{eq:111}. Nonnegativity is satisfied automatically since $F(x_k) \geq F^*$ for all $x_k$.

\item \emph{Best of two.} In \eqref{eq:k_1x}, we notice that the first term applies when $\sigma >0$ only. If $\sigma = 0$, then $u=0$, and subsequently the second term in \eqref{eq:k_1x} applies, which corresponds to the exact case. Notice that if $\sigma>0$ is very small (so $u \neq 0$), the iteration complexity result still may be better if the second term is used.

\item \emph{Generalization.} Note that for $\alpha = \beta=0$, \eqref{Thm_Ki} recovers $\frac{c_1}{\epsilon}(1 + \log \frac{1}{\rho}) + 2 -  \tfrac{c_1}{\xi_0}$, which is the result proved in Theorem~1(i) in \cite{Richtarik12}, while \eqref{Thm_Kii} recovers $c_2 \log((F(x_0)-F^*)/\epsilon \rho)$, which is the result proved in Theorem~1(ii) in \cite{Richtarik12}. Since the last term in \eqref{Thm_Ki} is negative, the theorem holds also if we ignore it. This is what we have done, for simplicity, in Table~\ref{Table_Comparison}.

\item \emph{High accuracy with high probability.} In the exact case, the iteration complexity results hold for any error tolerance $\epsilon >0$ and confidence $\rho \in (0,1)$. However, in the inexact case, there are restrictions on the choice of $\rho$ and $\epsilon$ for which we can guarantee the result $\Prob(F(x_k)-F^*\leq \epsilon)\geq 1-\rho$. Table~\ref{Table_epsrho} gives conditions on $\alpha$ and $\beta$ under which arbitrary confidence level (i.e., small $\rho$) and accuracy (i.e., small $\epsilon$) is achievable. For instance, if Theorem~\ref{Theorem1}(ii) is used, then one can achieve arbitrary accuracy only if $\beta = 0$, but arbitrary confidence under no assumptions on $\alpha$ and $\beta$. The situation for part (i) is worse: $\epsilon$ is lower bounded by a positive expression that involves $\rho$, unless $\alpha=\beta=0$.
        %Of course, if $\rho$ is very small, it may not be possible to find an $\epsilon$ that satisfies the stated bounds. On the other hand, for a given $\epsilon$, we have the bound $\frac{\beta}{\epsilon + \beta} <\rho$, which means that high confidence is achievable only if $\beta \ll \epsilon$.
       \begin{table}[!h]\centering
         \begin{tabular}{|c | c|c |}
         \hline
         & Theorem \ref{Theorem1}(i) & Theorem \ref{Theorem1}(ii)\\
         \hline
         $\epsilon$ can be arbitrarily small if & $\alpha = \beta = 0$ & $\beta = 0$\\
         \hline
         $\rho$ can be arbitrarily small if & $\beta = 0$ & any $\alpha, \;\beta$\\
         \hline
        \end{tabular}
        \label{Table_epsrho}
        \caption{The conditions under which arbitrary confidence $\rho$ and accuracy $\epsilon$ are attainable.}
       \end{table}

   %    \item \emph{Hidden assumption.} The nonnegativity assumption, together with the condition on the decrease of conditional expectation of $\xi_{k}$ in part (i) of  Theorem~\ref{Theorem1}, implicitly imply an upper bound on $\xi_0$ in terms of $\alpha, \beta$ and $c_1$. This upper bound turns out to be similar in structure to the expression $\tfrac{c_1}{2}\left(\alpha + \sqrt{\alpha^2 + \tfrac{4\beta}{c_1 \rho}}\right)$, which is a lower bound on $\epsilon$ (and hence on $\xi_0$),  by assumption. It may be useful to shed some light on this. Indeed, from nonnegativity we know that $\E[\xi_{k+1}\;|\; x_k] \geq 0$ for all $k\geq 0$. Hence, one must have $0\leq (1+\alpha)\xi_k-\tfrac{\xi_k^2}{c_1}+\beta$ for all $k\geq 0$, and in particular for $k=0$, which  implies the following upper bound on $\xi_0$:
%
%     \begin{equation}\label{eq:09890ss}\xi_0 \leq \tfrac{c_1}{2}\left(1+\alpha + \sqrt{(1+\alpha)^2 + \tfrac{4\beta}{c_1}}\right)\end{equation}
%
%  Whenever we use Theorem~\ref{Theorem1} in this paper, we will actually have $\xi_0 \leq c_1$, that is, \eqref{eq:09890ss} will hold \emph{as if} we had $\alpha=\beta = 0$.

\item \emph{Two lower bounds on $\epsilon$.} The inequality $\epsilon > \tfrac{c_1}{2}\left(\alpha + \sqrt{\alpha^2 + \tfrac{4\beta}{\rho c_1}}\right)$ (see part (i) of Theorem~\ref{Theorem1}) is equivalent to $\epsilon > \tfrac{\beta c_1}{\rho(\epsilon - \alpha c_1)}$. Note the similarity of the last expression and the lower bound on $\epsilon$ in part (ii) of the theorem. We can see that the lower bound on $\epsilon$ is smaller (and hence, is less restrictive) in (ii) than in (i), provided that $c_1=c_2$.

  \item \emph{Two analyses.} It can be seen that analyzing the ``shifted'' form \eqref{eq:shift} leads to a better result than analyzing \eqref{relations_1} directly, even when $\beta = 0$. Consider the case $\beta=0$, so that $\sigma=\alpha$ and $u=\alpha c_1$. From equation \eqref{Thm1_eqn}
%\[\theta_{k+1} \leq A \eqdef \alpha c_1 + \frac{1-\alpha}{\tfrac{1}{\theta_k-\alpha c_1} + \tfrac{1}{c_1}},\]
$\theta_{k+1} \leq A \eqdef \alpha c_1 + (1-\alpha)/(\tfrac{1}{\theta_k-\alpha c_1} + \tfrac{1}{c_1}),$
whereas analyzing equation \eqref{relations_1} directly yields
%\[\theta_{k+1} \leq B \eqdef \frac{1+\alpha}{\tfrac{1}{\theta_k} + \tfrac{1}{c_1}}.\]
$\theta_{k+1} \leq B \eqdef (1+\alpha)/(\tfrac{1}{\theta_k} + \tfrac{1}{c_1}).$ It can be shown that $A\leq B$, with equality if $\alpha = 0$.

\end{enumerate}

%%%%%%%%%%%%%%%%%%%%%%%%%%%%%%%%%%%%%%%%%%%%%%%%%%%%%%%%%%%%%%%%%%%%%%%%%%%%%%%%%%%%%%%%%%%%%%%%%%%%%%%%%%%%%%%%%%%%%%%%%%

\section{Complexity Analysis: Convex Composite Objective}\label{Section_CC}
The following function plays a central role in our analysis:
\begin{equation}
\label{H}
     H(x,T) \eqdef f(x) + \langle \nabla f(x) ,T \rangle + \tfrac{1}{2} \|T\|_{\Lip}^2 + \Psi(x+T).
\end{equation}
Comparing \eqref{H} with \eqref{Vi} using \eqref{eq:decomposition}, \eqref{U_i}, \eqref{block_gradient}, \eqref{S2_separable_psi} and \eqref{S_Norms_1} we get
\begin{equation}
\label{H2}
     H(x,T) = f(x) + \sum_{i=1}^n V_i(x,T^{(i)}).
\end{equation}

It will be useful to establish inequalities relating $H$ evaluated at the vector of exact updates $T_0(x)$ and $H$ evaluated at the vector of inexact updates $T_\delta(x)$.
\begin{lemma}  For all $x\in \R^N$ and $\delta \in \R^n_+$,
\begin{equation}
\label{eq:222}
     H(x,T_0(x)) \leq H(x,T_{\delta}(x)) \leq  H(x,T_0(x)) + \sum_{i=1}^n \delta^{(i)}.
\end{equation}
\end{lemma}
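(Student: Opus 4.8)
The plan is to reduce everything to the block decomposition \eqref{H2}, which writes $H(x,T) = f(x) + \sum_{i=1}^n V_i(x,T^{(i)})$ and thereby turns the claimed two-sided inequality into a statement that can be verified block by block and then summed. Concretely, I would first record that, by \eqref{H2},
\[
H(x,T_0(x)) = f(x) + \sum_{i=1}^n V_i\big(x,T_0^{(i)}(x)\big), \qquad H(x,T_\delta(x)) = f(x) + \sum_{i=1}^n V_i\big(x,T_\delta^{(i)}(x)\big),
\]
so that the difference $H(x,T_\delta(x)) - H(x,T_0(x))$ equals $\sum_{i=1}^n \big(V_i(x,T_\delta^{(i)}(x)) - V_i(x,T_0^{(i)}(x))\big)$, and it suffices to bound each summand in $[0,\delta^{(i)}]$.

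For the lower bound, I would invoke the definition $T_0^{(i)}(x) = \arg\min_{t\in\R^{N_i}} V_i(x,t)$ (the exact update, i.e.\ \eqref{Td_def} with $\delta^{(i)}=0$), which gives $V_i(x,T_0^{(i)}(x)) = \min_{t} V_i(x,t) \leq V_i(x,T_\delta^{(i)}(x))$ for every $i$; summing over $i$ and adding $f(x)$ yields $H(x,T_0(x)) \leq H(x,T_\delta(x))$. For the upper bound, I would use the inexactness condition \eqref{Td_def} directly: $V_i(x,T_\delta^{(i)}(x)) \leq \delta^{(i)} + \min_{t} V_i(x,t) = \delta^{(i)} + V_i(x,T_0^{(i)}(x))$. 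Summing these $n$ inequalities and adding $f(x)$ to both sides gives $H(x,T_\delta(x)) \leq H(x,T_0(x)) + \sum_{i=1}^n \delta^{(i)}$, completing the proof.

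There is essentially no hard step here; the only thing that needs a moment's care is the observation that the componentwise-defined vector $T_0(x) = (T_0^{(1)}(x),\dots,T_0^{(n)}(x))$ really is a minimizer of $T\mapsto H(x,T)$ over all of $\R^N$, which is exactly the content of \eqref{H2} together with separability of $\Psi$ — the sum $\sum_i V_i(x,T^{(i)})$ decouples across blocks, so it is minimized by minimizing each term separately. Once that is noted, the argument is just ``apply the blockwise optimality and the blockwise inexactness bound, then sum.''
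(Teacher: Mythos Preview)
Your proof is correct and follows essentially the same approach as the paper: both use the block decomposition \eqref{H2} to reduce the inequality to the termwise bounds $V_i(x,T_0^{(i)}(x)) = \min_t V_i(x,t) \leq V_i(x,T_\delta^{(i)}(x)) \leq \delta^{(i)} + \min_t V_i(x,t)$ coming from \eqref{Td_def}, and then sum. The paper presents this as a single chain of displayed (in)equalities, but the content is identical to what you wrote.
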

{\em Proof:}
\begin{eqnarray*}
H(x,T_0(x))
&\overset{\eqref{H2}}{=}& f(x) + \sum_{i=1}^n  V_i(x,T_0^{(i)}(x)) \overset{\eqref{Td_def}}{=} f(x) + \sum_{i=1}^n  \min_{t \in \R^{N_i}} V_i(x,t)\\
&\leq& f(x) + \sum_{i=1}^n V_i(x,T^{(i)}_{\delta}(x)) \overset{\eqref{H2}}{=}  H(x,T_{\delta}(x))\\
&\overset{\eqref{Td_def}}{\leq} & f(x) + \sum_{i=1}^n \Big(\delta^{(i)} + \min_{t \in \R^{N_i}} V_i(x,t)\Big)   \overset{\eqref{H2}}{=}  H(x,T_0(x)) + \sum_{i=1}^n \delta^{(i)}. \quad \qed
\end{eqnarray*}

The following Lemma provides an upper bound on the expected distance between the  current and optimal objective value in terms of the function $H$.
\begin{lemma}
\label{Lemma_HpF} For $x,T \in \R^N$, let $x_+(x,T)$ be the random vector equal to $x + U_i T^{(i)}$ with probability $\tfrac{1}{n}$ for each $i \in \{1,2,\dots,n\}$.
Then
\begin{eqnarray*}
     \mathbb{E}[F(x_+(x,T)) - F^* \;|\; x] \leq \tfrac{1}{n}(H(x,T) - F^*) +  \tfrac{n-1}{n}(F(x) - F^*).
\end{eqnarray*}
\end{lemma}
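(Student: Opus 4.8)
The plan is to compute the conditional expectation directly from the definition of $x_+(x,T)$ and then bound $F(x+U_iT^{(i)})$ using the upper bound \eqref{S2_upperbound_F}. Since $x_+(x,T) = x + U_iT^{(i)}$ with probability $\tfrac1n$ for each $i$, we have
\[
\E[F(x_+(x,T))\mid x] = \frac1n \sum_{i=1}^n F(x + U_i T^{(i)}).
\]
The first step is to apply \eqref{S2_upperbound_F}, which gives $F(x+U_iT^{(i)}) \leq f(x) + V_i(x,T^{(i)}) + \Psi_{-i}(x)$. Summing over $i$ and dividing by $n$, the $f(x)$ terms contribute $f(x)$, the $V_i$ terms sum to something I want to relate to $H$, and the $\Psi_{-i}(x)$ terms need to be reassembled into a multiple of $\Psi(x)$.

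Next I would handle the $\Psi_{-i}$ bookkeeping: by \eqref{psi}, $\sum_{i=1}^n \Psi_{-i}(x) = \sum_{i=1}^n \sum_{j\neq i}\Psi_j(x^{(j)}) = (n-1)\sum_{j=1}^n \Psi_j(x^{(j)}) = (n-1)\Psi(x)$, using the separability \eqref{S2_separable_psi}. Likewise $\sum_{i=1}^n V_i(x,T^{(i)}) = H(x,T) - f(x)$ by \eqref{H2}. Substituting these in,
\[
\E[F(x_+(x,T))\mid x] \leq \frac1n\Big( n f(x) + \big(H(x,T)-f(x)\big) + (n-1)\Psi(x)\Big) = \frac1n H(x,T) + \frac{n-1}{n}\big(f(x)+\Psi(x)\big),
\]
and $f(x)+\Psi(x) = F(x)$ by \eqref{F}. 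Subtracting $F^*$ from both sides — writing $F^* = \tfrac1n F^* + \tfrac{n-1}{n}F^*$ — yields exactly the claimed inequality.

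This proof is essentially a routine calculation, so I do not anticipate a genuine obstacle; the only point requiring a moment's care is the combinatorial identity $\sum_i \Psi_{-i}(x) = (n-1)\Psi(x)$, i.e. correctly tracking that each block-function $\Psi_j$ appears in $\Psi_{-i}$ for all $i\neq j$, hence exactly $n-1$ times. Everything else is bookkeeping with the definitions \eqref{S2_upperbound_F}, \eqref{psi}, \eqref{H2}, and \eqref{F}, plus the trivial splitting of $F^*$ to match the convex-combination structure of the right-hand side.
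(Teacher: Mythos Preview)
Your proof is correct and follows essentially the same approach as the paper: compute the expectation as a uniform average, apply the overapproximation \eqref{S2_upperbound_F} termwise, and then use \eqref{H2} together with the identity $\sum_i \Psi_{-i}(x) = (n-1)\Psi(x)$ to reassemble the sum into $\tfrac{1}{n}H(x,T) + \tfrac{n-1}{n}F(x)$. The only difference is that you spell out the $\Psi_{-i}$ bookkeeping and the final $F^*$ splitting a bit more explicitly than the paper does.
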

{\em Proof:}
\begin{eqnarray*}
     \mathbb{E}[F(x_+(x,T)) \;|\; x] &=& \sum_{i=1}^n \tfrac{1}{n} F(x + U_iT^{(i)})\\
     &\overset{\eqref{S2_upperbound_F}}{\leq}& \tfrac{1}{n} \sum_{i=1}^n [f(x) + V_i(x,T^{(i)}) + \psi_i(x)]\\
%     &\overset{\eqref{H2}}{=}& \tfrac{1}{n}H(x,T) + \tfrac{n-1}{n}f(x) + \tfrac{1}{n} \sum_{i=1}^n \psi_i(x)\\
     &\overset{\eqref{H2}+\eqref{psi}}{=}& \tfrac{1}{n}H(x,T) + \tfrac{n-1}{n}f(x) + \tfrac{1}{n} \sum_{i=1}^n \sum_{j \neq i} \Psi_j(x^{(j)})\\
     &=& \tfrac{1}{n}H(x,T) + \tfrac{n-1}{n}F(x). \qquad \qed
\end{eqnarray*}

Note that if $x = x_k$ and $T = T_{\delta}(x_k)$, then $x_+(x,T) = x_{k+1}$, as produced by Algorithm~\ref{ICD}. The following Lemma, which provides an upper bound on $H$, will be used repeatedly throughout the remainder of this paper.
\begin{lemma}
\label{Lemma_3}
For all $x \in \dom F$ and $\delta \in \R^n_+$ (letting $\Delta = \sum_i \delta^{(i)}$), we have
\begin{equation}
\label{Lemma_3eqn}
     H(x,T_{\delta}(x)) \leq \Delta + \min_{y \in \R^N} \left\{F(y) + \tfrac{1-\mu_f(\Lip)}{2}\|y-x\|_{\Lip}^2\right\}.
\end{equation}
\end{lemma}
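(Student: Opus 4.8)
The plan is to bound $H(x,T_{\delta}(x))$ by first using the left inequality from \eqref{eq:222}... wait, we want an upper bound, so we use the right inequality, $H(x,T_\delta(x)) \le H(x,T_0(x)) + \Delta$, reducing the claim to showing $H(x,T_0(x)) \le \min_{y}\{F(y) + \tfrac{1-\mu_f(\Lip)}{2}\|y-x\|_{\Lip}^2\}$. Since $T_0(x)$ is the exact blockwise minimizer of the $V_i$'s, and by \eqref{H2} $H(x,\cdot)$ decouples across blocks, $T_0(x) = \arg\min_{T\in\R^N} H(x,T)$; hence $H(x,T_0(x)) = \min_{T\in\R^N} H(x,T) \le H(x,y-x)$ for every $y \in \R^N$. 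So it suffices to show, for each fixed $y$, that $H(x,y-x) \le F(y) + \tfrac{1-\mu_f(\Lip)}{2}\|y-x\|_{\Lip}^2$.

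To prove this pointwise inequality, I would substitute $T = y - x$ into the definition \eqref{H} of $H$, giving $H(x,y-x) = f(x) + \langle \nabla f(x), y-x\rangle + \tfrac12\|y-x\|_{\Lip}^2 + \Psi(y)$. The term $\Psi(y)$ is already part of $F(y)$, so the inequality reduces to $f(x) + \langle \nabla f(x), y-x\rangle + \tfrac12\|y-x\|_{\Lip}^2 \le f(y) + \tfrac{1-\mu_f(\Lip)}{2}\|y-x\|_{\Lip}^2$, i.e. $f(x) + \langle \nabla f(x), y-x\rangle \le f(y) - \tfrac{\mu_f(\Lip)}{2}\|y-x\|_{\Lip}^2$. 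But this is exactly the strong convexity inequality \eqref{strongly_convex_1} for $f$ with respect to $\|\cdot\|_{\Lip}$, rearranged (with $\phi'(x) = \nabla f(x)$ since $f$ is smooth). Note that even if $\mu_f(\Lip) = 0$ this is just ordinary convexity of $f$, and by \eqref{eq_lipp1} we have $\mu_f(\Lip) \le 1$ so the coefficient $1 - \mu_f(\Lip)$ is nonnegative, which keeps the auxiliary quadratic term well-behaved (though nonnegativity is not strictly needed for the chain of inequalities).

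Putting the pieces together: $H(x,T_\delta(x)) \le H(x,T_0(x)) + \Delta = \Delta + \min_{T} H(x,T) \le \Delta + \min_y\{F(y) + \tfrac{1-\mu_f(\Lip)}{2}\|y-x\|_{\Lip}^2\}$, which is \eqref{Lemma_3eqn}. I do not anticipate a genuine obstacle here — the proof is a short composition of \eqref{eq:222}, the separability identity \eqref{H2}, and strong convexity \eqref{strongly_convex_1}. The one point requiring a little care is the identification $T_0(x) = \arg\min_T H(x,T)$: this follows because \eqref{H2} writes $H(x,T) - f(x)$ as a sum of terms $V_i(x,T^{(i)})$ each depending only on its own block, so the joint minimizer is the concatenation of the blockwise minimizers $T_0^{(i)}(x) = \arg\min_t V_i(x,t)$, exactly as defined after \eqref{Td_def}. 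The rest is bookkeeping.
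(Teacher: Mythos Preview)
Your proof is correct and follows essentially the same approach as the paper: use the right inequality in \eqref{eq:222} to pass from $T_\delta(x)$ to $T_0(x)$ plus $\Delta$, identify $H(x,T_0(x)) = \min_T H(x,T)$ via the block-separable form \eqref{H2}, change variables $T = y-x$, and then apply the strong-convexity inequality \eqref{strongly_convex_1} for $f$. The paper's write-up is more compressed (it folds the identification $H(x,T_0(x)) = \min_T H(x,T)$ into the citation of \eqref{eq:222}), but the logical content is identical.
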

{\em Proof:}
     \begin{eqnarray}
\notag
H(x,\TT(x))  &\overset{\eqref{eq:222}}{\leq}& \Delta + \min_{T \in \mathbb{R}^N} H(x,T)  \\
\notag
&=& \Delta + \min_{y\in \mathbb{R}^N } H(x,y-x) \qquad \qquad (\text{where } y = x+T)\\
\notag
&\overset{\eqref{H}}{=}& \Delta +\min_{y\in \mathbb{R}^N} \{f(x) + \langle \nabla f(x),y-x\rangle + \Psi(y) + \tfrac{1}{2} \|y-x\|_{\Lip}^2 \}\\
\notag
&\overset{\eqref{strongly_convex_1}}{\leq}& \Delta + \min_{y\in \mathbb{R}^N}\{ f(y) -  \tfrac{\mu_f(\Lip)}{2} \|y-x\|_{\Lip}^2 + \Psi(y) + \tfrac{1}{2} \|y-x\|_{\Lip}^2\}. \qquad \qed
\end{eqnarray}

\subsection{Convex case}

Now we need to estimate $H(x,\TT(x))-F^*$ from above in terms of $F(x)-F^*$.

\begin{lemma}
\label{Lemma_HF}
     Fix $x^* \in X^*$, $x \in \dom F$, $\delta \in \R^n_+$ and let $R = \|x-x^*\|_{\Lip}$  and $\Delta=\sum_i \delta^{(i)}$. Then
\begin{equation}
\label{result_L2}
     H(x,\TT(x)) - F^* \leq \Delta +
\begin{cases}
(1-\frac{F(x) - F^*}{2R^2})(F(x) - F^*), & \text{if } F(x) - F^* \leq R^2,\\
 \frac{1}{2}R^2 < \frac{1}{2}(F(x) - F^*), & \text{otherwise}.
\end{cases}
\end{equation}
\end{lemma}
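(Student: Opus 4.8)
The plan is to feed Lemma~\ref{Lemma_3} a cleverly chosen feasible point for the minimization on its right-hand side. First I would use convexity of $F$, which gives $\mu_f(\Lip)\geq 0$ and hence $1-\mu_f(\Lip)\leq 1$, to weaken \eqref{Lemma_3eqn} to
\[
     H(x,\TT(x)) \;\leq\; \Delta + \min_{y\in\R^N}\left\{F(y) + \tfrac12\|y-x\|_{\Lip}^2\right\}.
\]
Instead of minimizing over all of $\R^N$, I would restrict $y$ to the segment $y_\lambda \eqdef x + \lambda(x^*-x)$ for $\lambda\in[0,1]$, joining the current point $x$ to the fixed optimal point $x^*$; this can only increase the right-hand side.

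On this segment two elementary estimates apply: convexity of $F$ gives $F(y_\lambda)\leq \lambda F^* + (1-\lambda)F(x) = F^* + (1-\lambda)(F(x)-F^*)$, and positive homogeneity of $\|\cdot\|_{\Lip}$ gives $\|y_\lambda - x\|_{\Lip} = \lambda R$. Substituting these bounds and subtracting $F^*$ yields
\[
     H(x,\TT(x)) - F^* \;\leq\; \Delta + \min_{\lambda\in[0,1]}\left\{(1-\lambda)(F(x)-F^*) + \tfrac{\lambda^2 R^2}{2}\right\}.
\]

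It remains to minimize the one-dimensional quadratic $g(\lambda)\eqdef (1-\lambda)(F(x)-F^*) + \tfrac{\lambda^2R^2}{2}$ over $[0,1]$. Its unconstrained minimizer is $\lambda^\star = (F(x)-F^*)/R^2$. In the case $F(x)-F^* \leq R^2$ we have $\lambda^\star\in[0,1]$, and plugging $\lambda^\star$ back into $g$ produces exactly $\big(1 - \tfrac{F(x)-F^*}{2R^2}\big)(F(x)-F^*)$, the first branch of \eqref{result_L2}. Otherwise $\lambda^\star > 1$, so $g$ is strictly decreasing on $[0,1]$ and the constrained minimum is attained at $\lambda = 1$, where $g(1) = \tfrac12 R^2$; moreover $R^2 < F(x)-F^*$ in this regime, which gives the strict inequality in the second branch. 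I expect no real difficulty here; the only point needing a line of care is the boundary case, namely verifying that $g$ is monotone on $[0,1]$ when $\lambda^\star$ lies outside $[0,1]$, so that the endpoint value is genuinely the minimum.
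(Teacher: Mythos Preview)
Your proposal is correct and follows essentially the same route as the paper: apply Lemma~\ref{Lemma_3} with the convex (non-strongly-convex) bound $1-\mu_f(\Lip)\leq 1$, restrict the minimization to the segment between $x$ and $x^*$, use convexity of $F$ along that segment, and minimize the resulting one-variable quadratic to obtain $\lambda^* = \min\{1,(F(x)-F^*)/R^2\}$. The only cosmetic difference is that the paper simply sets $\mu_f(\Lip)=0$ in this section, whereas you keep $\mu_f(\Lip)\geq 0$ and drop the term; the computations thereafter coincide.
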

{\em Proof:}
Because strong convexity is not assumed, $\mu_f(\Lip) = 0$, so
\begin{eqnarray*}
%\notag
H(x,\TT(x))  &\overset{\eqref{Lemma_3eqn}}{\leq}& \Delta + \min_{y\in \mathbb{R}^N}\{ F(y) +  \tfrac{1}{2} \|y-x\|_{\Lip}^2\}\\
%\label{Lemma_result}
&\leq& \Delta +\min_{\lambda \in [0,1]} \{F(\lambda x^* + (1-\lambda)x) +  \tfrac{\lambda^2}{2} \|x-x^*\|_{\Lip}^2\}\\
%\notag
&\leq& \Delta + \min_{\lambda \in [0,1]} \{F(x) - \lambda(F(x)-F^*)+  \tfrac{\lambda^2}{2} R^2\}.
\end{eqnarray*}
Minimizing in $\lambda$ gives $\lambda^* = \min \{1, (F(x)-F^*) /R^2 \}$ and the result follows. \qquad \qed

We now state the main complexity result of this section, which bounds the number of iterations sufficient for ICD used with uniform probabilities to decrease the value of the objective to within $\epsilon$ of the optimal value with probability at least $1-\rho$.

\begin{theorem}\label{Thm:C-N} Choose an initial point $x_0\in \R^N$ and let $\{x_k\}_{k\geq 0}$ be the random iterates generated by ICD applied to problem \eqref{F}, using uniform probabilities $p_i=\frac{1}{n}$ and inexactness parameters $\delta_k^{(1)},\dots, \delta_k^{(n)}\geq 0$ that satisfy \eqref{eq:alpha-beta} for $\alpha,\beta \geq 0$. Choose target confidence $\rho \in (0,1)$ and error tolerance $\epsilon>0$ so that one of the following two conditions hold:
\begin{itemize}
     \item[(i)] $\tfrac{c_1}{2}(\alpha + \sqrt{\alpha^2 + \tfrac{4\beta}{c_1 \rho}}) < \epsilon < F(x_0) - F^*$ and $\alpha^2 + \tfrac{4\beta}{c_1 }<1$, where $c_1 = 2n\max\{\mathcal{R}_{\Lip}^2(x_0),F(x_0) - F^*\}$,
\item[(ii)] $\tfrac{\beta c_2}{\rho(1-\alpha c_2)} < \epsilon <  \min\{\mathcal{R}_{\Lip}^2(x_0),F(x_0) - F^*\}$, where $c_2 = \tfrac{2n\mathcal{R}_{\Lip}^2(x_0)}{\epsilon}$ and $\alpha c_2 < 1$.
\end{itemize}
If (i) holds and we choose $K$ as in \eqref{Thm_Ki}, or if (ii) holds and we choose $K$ as in \eqref{Thm_Kii}, then $\Prob(F(x_K)-F^* \leq \epsilon) \geq 1-\rho$.
\end{theorem}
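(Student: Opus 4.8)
The plan is to reduce Theorem~\ref{Thm:C-N} to the abstract recurrence machinery in Theorem~\ref{Theorem1} by verifying its hypotheses for the concrete sequence $\xi_k := F(x_k)-F^*$ generated by ICD with uniform probabilities. First I would record that $\varphi(x):=F(x)-F^*$ is nonnegative (since $F^*$ is the optimal value) and that $\{\xi_k\}$ is nonincreasing by the monotonicity Lemma~\eqref{eq:111}, so the structural assumptions of Theorem~\ref{Theorem1} are automatic. The substantive work is to establish one of the two recurrence inequalities on $\E[\xi_{k+1}\mid x_k]$.

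To get the recurrences, I would chain together the lemmas already proved. Starting from Lemma~\ref{Lemma_HpF} applied with $x=x_k$, $T=T_{\delta_k}(x_k)$ (so that $x_+(x,T)=x_{k+1}$), one has $\E[\xi_{k+1}\mid x_k]\leq \tfrac1n\big(H(x_k,T_{\delta_k}(x_k))-F^*\big)+\tfrac{n-1}{n}\xi_k$. Then I would feed in Lemma~\ref{Lemma_HF} with $x=x_k$, $x^*\in X^*$, $R=\|x_k-x^*\|_{\Lip}$ and $\Delta=\sum_i\delta_k^{(i)}$; crucially, since the algorithm is monotone, $F(x_k)\leq F(x_0)$, so $R\leq \mathcal{R}_{\Lip}(x_0)$ and hence $R^2\leq \mathcal{R}_{\Lip}^2(x_0)$, while also $\xi_k\leq \xi_0=F(x_0)-F^*$. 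In the case $\xi_k\leq R^2$, Lemma~\ref{Lemma_HF} gives $H-F^*\leq \Delta+\xi_k-\tfrac{\xi_k^2}{2R^2}\leq \Delta+\xi_k-\tfrac{\xi_k^2}{2\mathcal{R}_{\Lip}^2(x_0)}$; in the case $\xi_k>R^2$ it gives $H-F^*\leq \Delta+\tfrac12\xi_k\leq \Delta+\xi_k-\tfrac{\xi_k^2}{2\max\{\mathcal{R}_{\Lip}^2(x_0),\xi_0\}}$ (using $\xi_k\leq\xi_0$ to bound $\tfrac12\xi_k$ from above by $\xi_k-\tfrac{\xi_k^2}{2\xi_0}$). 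Combining, in all cases $H(x_k,T_{\delta_k}(x_k))-F^*\leq \Delta + \xi_k - \tfrac{\xi_k^2}{2\max\{\mathcal{R}_{\Lip}^2(x_0),\xi_0\}}$. Substituting back and taking the conditional expectation over the also-random $\delta_k$, using Assumption~\ref{assumption} in the form $\E[\Delta\mid x_k]=\sum_i p_i\delta_k^{(i)}\cdot n\leq$ — wait, with $p_i=\tfrac1n$ we have $\Delta=\sum_i\delta_k^{(i)}=n\bar\delta_k\leq n(\alpha\xi_k+\beta)$, so $\tfrac1n\Delta\leq \alpha\xi_k+\beta$ — yields
\[
\E[\xi_{k+1}\mid x_k]\leq \tfrac1n\Big(\Delta+\xi_k-\tfrac{\xi_k^2}{2\max\{\mathcal{R}_{\Lip}^2(x_0),\xi_0\}}\Big)+\tfrac{n-1}{n}\xi_k \leq (1+\alpha)\xi_k-\tfrac{\xi_k^2}{c_1}+\beta,
\]
which is exactly hypothesis (i) of Theorem~\ref{Theorem1} with $c_1=2n\max\{\mathcal{R}_{\Lip}^2(x_0),\xi_0\}$. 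For part (ii), I would instead use only the first branch estimate: when $\xi_k\geq\epsilon$ and $\epsilon<\mathcal{R}_{\Lip}^2(x_0)$ one can show $\xi_k-\tfrac{\xi_k^2}{2\mathcal{R}_{\Lip}^2(x_0)}\leq \xi_k-\tfrac{\epsilon\xi_k}{2\mathcal{R}_{\Lip}^2(x_0)}=(1-\tfrac{1}{c_2})\xi_k$ with $c_2=\tfrac{2n\mathcal{R}_{\Lip}^2(x_0)}{\epsilon}$, giving $\E[\xi_{k+1}\mid x_k]\leq (1+\alpha-\tfrac1{c_2})\xi_k+\beta$, hypothesis (ii). The numerical side-conditions on $\epsilon,\rho,\alpha,\beta$ in the statement of Theorem~\ref{Thm:C-N} are precisely those required by Theorem~\ref{Theorem1}(i) resp.\ (ii) for these values of $c_1,c_2$, so once the recurrences are in hand the conclusion $\Prob(\xi_K\leq\epsilon)\geq 1-\rho$ follows immediately by invoking Theorem~\ref{Theorem1}.

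The main obstacle I anticipate is not any single deep step but the careful bookkeeping in the case split of Lemma~\ref{Lemma_HF}: one must unify the two branches into a single quadratic-decrement inequality with the \emph{same} constant, and this requires using monotonicity ($\xi_k\leq\xi_0$ and $R\leq\mathcal{R}_{\Lip}(x_0)$) at exactly the right places so that the worst case over both branches is governed by $\max\{\mathcal{R}_{\Lip}^2(x_0),\xi_0\}$. A secondary subtlety is handling the extra randomness in $\delta_k$: the cleanest route is to note $\delta_k$ is $x_k$-measurable (chosen in Step~3 before block selection in Step~4), or otherwise to take a further expectation and invoke Assumption~\ref{assumption} directly. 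Finally I would double-check the direction of the inequality $\tfrac12 t\leq t-\tfrac{t^2}{2s}$ for $0\leq t\leq s$, which is what lets the second branch be absorbed — it holds iff $t\leq s$, consistent with using $s=\max\{\mathcal{R}_{\Lip}^2(x_0),\xi_0\}\geq\xi_0\geq\xi_k$.
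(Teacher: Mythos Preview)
Your approach matches the paper's: chain Lemmas~\ref{Lemma_HpF} and~\ref{Lemma_HF}, use $\tfrac1n\Delta=\bar\delta_k$ together with Assumption~\ref{assumption}, then invoke Theorem~\ref{Theorem1}. Part~(i) is correct as written.

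In part~(ii), though, there is a small gap. You say you will ``use only the first branch estimate'' of Lemma~\ref{Lemma_HF}, but that branch is only valid when $\xi_k\le R^2$; you never say what happens when $\xi_k>R^2$. Your unified quadratic bound from part~(i) does not rescue you here, because it carries the constant $\max\{\mathcal{R}_{\Lip}^2(x_0),\xi_0\}$ rather than $\mathcal{R}_{\Lip}^2(x_0)$ alone, and would therefore produce a larger $c_2$ than the theorem claims whenever $\xi_0>\mathcal{R}_{\Lip}^2(x_0)$. The paper avoids this by keeping the two-branch bound in the form $\bar\delta_k+\max\{1-\tfrac{\xi_k}{2n\mathcal{R}_{\Lip}^2(x_0)},\,1-\tfrac{1}{2n}\}\xi_k$ and then observing that the hypothesis $\epsilon<\mathcal{R}_{\Lip}^2(x_0)$ forces $c_2=\tfrac{2n\mathcal{R}_{\Lip}^2(x_0)}{\epsilon}>2n$, so \emph{both} terms in the max are at most $1-\tfrac{1}{c_2}$ once $\xi_k\ge\epsilon$. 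Incidentally, in your displayed calculation for~(ii) a factor of $n$ has slipped: $\xi_k-\tfrac{\epsilon\xi_k}{2\mathcal{R}_{\Lip}^2(x_0)}$ is $(1-\tfrac{\epsilon}{2\mathcal{R}_{\Lip}^2(x_0)})\xi_k$, not $(1-\tfrac{1}{c_2})\xi_k$; the $\tfrac1n$ from Lemma~\ref{Lemma_HpF} must be applied before you can identify the coefficient with $1-\tfrac{1}{c_2}$.
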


\begin{proof}
Since $F(x_k)\leq F(x_0)$ for all $k$ by \eqref{eq:111}, we have $\|x_k - x^*\|_{\Lip} \leq \mathcal{R}_{\Lip}(x_0)$ for all $k$ and $x^* \in X^*$. Using Lemma~\ref{Lemma_HpF} and Lemma~\ref{Lemma_HF}, and letting $\xi_k\eqdef F(x_k) - F^*$, we have
\begin{eqnarray}
\mathbb{E}[\xi_{k+1} \;|\; x_k] &\leq& \bar{\delta}_k + \tfrac{1}{n} \max\left\{1 - \tfrac{\xi_k}{2\|x_k - x^*\|_{\Lip}^2},\tfrac{1}{2} \right\}\xi_k + \tfrac{n-1}{n}\xi_k \\
\notag
&=& \bar{\delta}_k + \max\left\{1 - \tfrac{\xi_k}{2n\|x_k - x^*\|_{\Lip}^2},1-\tfrac{1}{2n} \right\}\xi_k\\
\label{Thm2_last}
&\leq& \bar{\delta}_k + \max\left\{1 - \tfrac{\xi_k}{2n\mathcal{R}_{\Lip}^2(x_0)},1-\tfrac{1}{2n} \right\}\xi_k.
\end{eqnarray}
Consider case (i). From \eqref{Thm2_last} and \eqref{eq:alpha-beta} we obtain
\begin{equation}
\mathbb{E}[\xi_{k+1}\;|\;x_k] \leq  \bar{\delta}_k + \Big(1 - \tfrac{\xi_k}{c_1}\Big)\xi_k \leq  (1+\alpha) \xi_k - \tfrac{\xi_k^2}{c_1} + \beta,
\end{equation}
and the result follows by applying Theorem~\ref{Theorem1}(i). Now consider case (ii). Notice that if $\xi_k \geq \epsilon$, then \eqref{Thm2_last} together with \eqref{eq:alpha-beta}, imply that
\begin{equation*}
     \mathbb{E}[\xi_{k+1}\;|\;x_k] \leq  \bar{\delta}_k + \max\left\{1 - \tfrac{\epsilon}{2n\mathcal{R}_{\Lip}^2(x_0)},1-\tfrac{1}{2n} \right\}\xi_k \leq  \left(1 + \alpha- \tfrac{1}{c_2}\right)\xi_k + \beta.
\end{equation*}
The result follows by applying Theorem~\ref{Theorem1}(ii).
\end{proof}

\subsection{Strongly convex case}

Let us start with an auxiliary result.

\begin{lemma}
\label{Lemma_stronglyconvexHF}
     Let $F$ be strongly convex with respect to $\| \cdot \|_{\Lip}$ with $\mu_f(\Lip) + \mu_{\Psi}(\Lip) >0$. Then for all $x\in \dom F$ and $\delta\in \R^n_+$, with $\Delta= \sum_i \delta^{(i)}$, we have
\begin{eqnarray*}
     H(x,\TT(x)) - F^* \leq \Delta + \left(\tfrac{1-\mu_f(\Lip)}{1+\mu_\Psi(\Lip)}\right)(F(x)-F^*).
\end{eqnarray*}
\end{lemma}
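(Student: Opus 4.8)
The plan is to mimic the structure of the proof of Lemma~\ref{Lemma_HF}, but now exploiting the strong convexity of both $f$ and $\Psi$ rather than just convexity. First I would start from Lemma~\ref{Lemma_3}, which gives
\[
H(x,\TT(x)) \leq \Delta + \min_{y\in\R^N}\Big\{F(y) + \tfrac{1-\mu_f(\Lip)}{2}\|y-x\|_{\Lip}^2\Big\},
\]
and then restrict the minimization over $y$ to the line segment $y = \lambda x^* + (1-\lambda)x$ for $\lambda\in[0,1]$, where $x^*\in X^*$. On this segment $\|y-x\|_{\Lip}^2 = \lambda^2\|x-x^*\|_{\Lip}^2$.

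Next, the key difference from the convex case: I would not simply use convexity of $F$ on the segment, but rather the strong convexity characterisation \eqref{strongly_convex_3} applied to $F$ (with $\mu_F(\Lip)\geq \mu_f(\Lip)+\mu_\Psi(\Lip)$ by \eqref{strongly_convex_4}). This gives
\[
F(\lambda x^* + (1-\lambda)x) \leq \lambda F^* + (1-\lambda)F(x) - \tfrac{\mu_F(\Lip)\lambda(1-\lambda)}{2}\|x-x^*\|_{\Lip}^2.
\]
Substituting into the bound and writing $R^2 = \|x-x^*\|_{\Lip}^2$, the objective in $\lambda$ becomes $F(x) - \lambda(F(x)-F^*) + \tfrac{\lambda^2(1-\mu_f(\Lip))}{2}R^2 - \tfrac{\mu_F(\Lip)\lambda(1-\lambda)}{2}R^2$. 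The natural move is to choose $\lambda$ cleverly rather than optimize exactly — I expect the right choice to be something like $\lambda = \tfrac{\mu_f(\Lip)+\mu_\Psi(\Lip)}{1+\mu_\Psi(\Lip)}$ (or a closely related value), which should be in $[0,1]$ because of \eqref{eq_lipp1} ($\mu_f(\Lip)\leq 1$) and because $\mu_\Psi(\Lip)\geq 0$. With that choice the $R^2$ terms should be arranged to cancel or to be non-positive, leaving exactly $\Delta + \big(\tfrac{1-\mu_f(\Lip)}{1+\mu_\Psi(\Lip)}\big)(F(x)-F^*)$.

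The main obstacle I anticipate is the bookkeeping with the quadratic-in-$\lambda$ term: one needs the coefficient of $R^2$ to come out non-positive (so it can be dropped) precisely at the proposed $\lambda$, and simultaneously the coefficient of $(F(x)-F^*)$ to reduce to the clean ratio $\tfrac{1-\mu_f(\Lip)}{1+\mu_\Psi(\Lip)}$. This requires using $\mu_F(\Lip)\geq \mu_f(\Lip)+\mu_\Psi(\Lip)$ at the right moment and verifying the algebra $1 - \lambda = \tfrac{1-\mu_f(\Lip)}{1+\mu_\Psi(\Lip)}$ with the claimed $\lambda$. A secondary subtlety is handling the case $R=0$ (i.e. $x\in X^*$), which is trivial since then $F(x)=F^*$ and the inequality $H(x,\TT(x))-F^*\leq\Delta$ holds directly from Lemma~\ref{Lemma_3}. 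I would also note that the hypothesis $\mu_f(\Lip)+\mu_\Psi(\Lip)>0$ is what makes the segment argument genuinely use strong convexity, but the final inequality is in fact meaningful (and the proof still goes through) even if only $\mu_\Psi(\Lip)>0$ while $\mu_f(\Lip)=0$.
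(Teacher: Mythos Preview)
Your proposal is correct and follows essentially the same route as the paper: start from Lemma~\ref{Lemma_3}, restrict to the segment $y=\lambda x^*+(1-\lambda)x$, apply \eqref{strongly_convex_3} together with \eqref{strongly_convex_4}, and then pick $\lambda^* = \tfrac{\mu_f(\Lip)+\mu_\Psi(\Lip)}{1+\mu_\Psi(\Lip)}$. At this $\lambda^*$ the coefficient of $\|x-x^*\|_{\Lip}^2$ is in fact exactly zero (since $(\mu_f+\mu_\Psi)(1-\lambda^*)=(1-\mu_f)\lambda^*$), which is precisely the cancellation you anticipated, and $1-\lambda^* = \tfrac{1-\mu_f(\Lip)}{1+\mu_\Psi(\Lip)}$ gives the claimed constant.
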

{\em Proof:}
Let $\mu_f = \mu_f(\Lip)$, $\mu_\Psi = \mu_\Psi(\Lip)$ and $\lambda^* = (\mu_f + \mu_\Psi)/(1+\mu_\Psi) \leq 1$. Then,
\begin{eqnarray*}
   H(x, \TT(x))&\overset{\eqref{Lemma_3eqn}}{\leq} & \Delta +  \min_{y \in \R^N} \{ F(y) +  \tfrac{1-\mu_f}{2} \|y-x\|_{\Lip}^2\}\\
& \leq& \Delta +\min_{\lambda \in [0,1]} \{F(\lambda x^* + (1-\lambda)x) +  \tfrac{(1-\mu_f) \lambda^2}{2} \|x-x^*\|_{\Lip}^2\}\\
&\overset{\eqref{strongly_convex_4}+\eqref{strongly_convex_3}}{\leq}& \Delta + \min_{\lambda \in [0,1]}\{ \lambda F^* +(1- \lambda)F(x) +  \tfrac{(1-\mu_f)\lambda^2-(\mu_f+\mu_\Psi)\lambda(1-\lambda)}{2} \|x-x^*\|_{\Lip}^2\}\\
&\leq& \Delta + F(x) - \lambda^* (F(x)-F^*).
\end{eqnarray*}
The last inequality follows from the fact that $(\mu_f +\mu_\Psi)(1-\lambda^*) - (1-\mu_f)\lambda^* = 0$. It remains to subtract $F^*$ from both sides of the final inequality. \quad \qed

We can now estimate the number of iterations needed to decrease a strongly convex objective $F$ within $\epsilon$ of the optimal value with high probability.
\begin{theorem}
\label{Thm_ICD_strong}
     Let $F$ be strongly convex with respect to the norm $\| \cdot \|_{\Lip}$ with $\mu_f(\Lip) + \mu_\Psi(\Lip)>0$ and let $\mu \eqdef \tfrac{\mu_f(\Lip)+\mu_\Psi(\Lip)}{1+\mu_\Psi(\Lip)}$. Choose an initial point $x_0\in \R^N$ and let $\{x_k\}_{k\geq 0}$, be the random iterates generated by ICD applied to problem \eqref{F}, used with uniform probabilities $p_i=\tfrac{1}{n}$ for $i=1,2,\dots,n$ and inexactness parameters $\delta_k^{(1)},\dots,\delta_k^{(n)}\geq 0$ satisfying \eqref{eq:alpha-beta}, for $0\leq \alpha< \frac{\mu}{n}$ and $\beta\geq 0$. Choose confidence level  $\rho \in (0,1)$ and  error tolerance $\epsilon$ satisfying $\tfrac{\beta n}{\rho(\mu-\alpha n)} <\epsilon < F(x_0)-F^*$. Then for $K$ given by \eqref{Thm_Kii}, we have $\Prob(F(x_K) - F^* \leq \epsilon) \geq 1-\rho$.
\end{theorem}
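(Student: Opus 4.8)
The plan is to reduce the statement to Theorem~\ref{Theorem1}(ii), applied to the random process $\xi_k := F(x_k)-F^*$ generated by ICD, under the identification $c_2 = n/\mu$. First I would check that the standing hypotheses of Theorem~\ref{Theorem1} hold for $\varphi(x) := F(x)-F^*$ and this $\{x_k\}$: nonnegativity is immediate since $F(x_k)\geq F^*$, and $\{\xi_k\}$ is nonincreasing by \eqref{eq:111}. In particular $F(x_k)\leq F(x_0)<\infty$, so every iterate lies in $\dom F$ and Lemma~\ref{Lemma_stronglyconvexHF} is applicable at each $x_k$.

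The heart of the argument is a one-step recursion of the form required by Theorem~\ref{Theorem1}(ii). Since $p_i=\tfrac1n$, the iterate $x_{k+1}$ coincides with $x_+(x_k,T_{\delta_k}(x_k))$ in the notation of Lemma~\ref{Lemma_HpF}, so
\[
\E[\xi_{k+1}\mid x_k] \;\leq\; \tfrac1n\big(H(x_k,T_{\delta_k}(x_k))-F^*\big) + \tfrac{n-1}{n}\,\xi_k .
\]
Inserting Lemma~\ref{Lemma_stronglyconvexHF}, namely $H(x_k,T_{\delta_k}(x_k))-F^* \leq \Delta_k + \tfrac{1-\mu_f(\Lip)}{1+\mu_\Psi(\Lip)}\,\xi_k$ with $\Delta_k=\sum_i\delta_k^{(i)}$, and noting that uniform probabilities give $\tfrac1n\Delta_k = \sum_i p_i\delta_k^{(i)} = \bar{\delta}_k \leq \alpha\xi_k+\beta$ by Assumption~\ref{assumption}, a short computation shows
\[
\tfrac1n\cdot\tfrac{1-\mu_f(\Lip)}{1+\mu_\Psi(\Lip)} + \tfrac{n-1}{n} \;=\; 1-\tfrac{\mu}{n},
\]
so that $\E[\xi_{k+1}\mid x_k] \leq \big(1+\alpha-\tfrac{\mu}{n}\big)\xi_k+\beta$. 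This is precisely the hypothesis of Theorem~\ref{Theorem1}(ii) with $c_2=n/\mu$, and it holds for every $k$ (hence in particular for every $k$ with $\xi_k\geq\epsilon$).

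It then remains to confirm that all side conditions transfer under $c_2=n/\mu$. The requirement $\alpha c_2<1$ becomes $\alpha<\mu/n$, which is assumed; $(1+\alpha)c_2\geq1$ is automatic since $\mu\leq1$ (because $\mu_f(\Lip)\leq1$ by \eqref{eq_lipp1}, whence $c_2=n/\mu\geq n\geq1$); and $\tfrac{\beta c_2}{\rho(1-\alpha c_2)}<\epsilon<\xi_0$ becomes exactly $\tfrac{\beta n}{\rho(\mu-\alpha n)}<\epsilon<F(x_0)-F^*$, our hypothesis on $\epsilon$. Theorem~\ref{Theorem1}(ii) then gives $\Prob(\xi_K\leq\epsilon)\geq1-\rho$ for $K$ as in \eqref{Thm_Kii}, and substituting $c_2=n/\mu$ into \eqref{Thm_Kii} produces the expression in the SC-N row of Table~\ref{Table_Comparison}.

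I do not anticipate a genuine obstacle: all the analytic work has already been done in Lemmas~\ref{Lemma_HpF} and~\ref{Lemma_stronglyconvexHF} and in Theorem~\ref{Theorem1}, leaving only the bookkeeping above. The two points needing a little care are the algebraic identity collapsing the two coefficients to $1-\mu/n$, and the distinction between $\bar{\delta}_k=\sum_i p_i\delta_k^{(i)}$ (in Assumption~\ref{assumption}) and $\Delta_k=\sum_i\delta_k^{(i)}$ (in Lemma~\ref{Lemma_stronglyconvexHF}); with uniform probabilities these differ by a factor $n$, which is absorbed exactly by the $\tfrac1n$ multiplying the $H$-term in Lemma~\ref{Lemma_HpF}.
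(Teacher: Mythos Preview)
Your proposal is correct and follows essentially the same route as the paper's proof: combine Lemma~\ref{Lemma_HpF} with Lemma~\ref{Lemma_stronglyconvexHF}, use Assumption~\ref{assumption} to bound $\tfrac{1}{n}\Delta_k=\bar{\delta}_k$, collapse the coefficients to $1-\mu/n$, and invoke Theorem~\ref{Theorem1}(ii) with $c_2=n/\mu$. If anything, you are more explicit than the paper in checking the side conditions $(1+\alpha)c_2\geq 1$ and in spelling out the $\Delta_k$ versus $\bar{\delta}_k$ bookkeeping.
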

\begin{proof}
Letting $\xi_{k} = F(x_k)-F^*$, we have
\begin{eqnarray*}\E[\xi_{k+1}\;|\; x_k] &\overset{(\text{Lemma~\ref{Lemma_HpF})}}{\leq} & \tfrac{1}{n} (H(x_k,T_{\delta_k}(x_k)) - F^*)  + \tfrac{n-1}{n}\xi_k\\
& \overset{(\text{Lemma~\ref{Lemma_stronglyconvexHF}})}{\leq} &
 \bar{\delta}_k + \tfrac{1}{n}\left(\tfrac{1-\mu_f(\Lip)}{1+\mu_\Psi(\Lip)} \xi_k\right)   + \tfrac{n-1}{n}\xi_k \\
&\overset{\eqref{eq:alpha-beta}}{\leq}&  \left(1 + \alpha -\tfrac{\mu}{n}\right)\xi_k + \beta.
\end{eqnarray*}
By \eqref{eq_lipp1}, $\mu \leq 1$, and the result follows from Theorem \ref{Theorem1}(ii) with $c_2 = \frac{n}{\mu}$.
\end{proof}

\section{Complexity Analysis: Smooth Objective}
\label{Section_CS}
In this section we provide simplified iteration complexity results when the objective function is smooth ($\Psi \equiv 0$ so $F \equiv f$). Furthermore, we provide complexity results for arbitrary (rather than uniform) probabilities $p_i>0$.

\subsection{Convex case}
In the smooth exact case, we can write down a closed-form expression for the update:
\begin{eqnarray*}
     T_0^{(i)}(x) \overset{\eqref{Td_def}}{=} \arg \min_{t \in \R^{N_i}} V_i(x,t) \overset{\eqref{Vi}}{=} \arg \min_{t \in \R^{N_i}} \{\langle \nabla_i f(x), t \rangle + \tfrac{\Lip_i}{2}\|t \|_{(i)}^2\} = -\tfrac{1}{\Lip_i}B_i^{-1} \nabla_i f(x).
\end{eqnarray*}
Substituting this into $V_i(x,\cdot)$ yields
\begin{equation}
\label{Smooth_valVi}
     V_i(x,T_0^{(i)}(x)) =\langle \nabla_i f(x),T_0^{(i)}(x) \rangle + \tfrac{\Lip_i}{2}\|T_0^{(i)}(x) \|_{(i)}^2 = -\tfrac{1}{2\Lip_i}(\|\nabla_i f(x)\|_{(i)}^*)^2.
\end{equation}
We can now estimate the decrease in $f$ during one iteration of ICD:
\begin{eqnarray}
\notag
     f(x + U_i T_{\delta}^{(i)}(x)) - f(x) &\overset{\eqref{S2_upperbound}}{\leq}& \langle \nabla_i f(x), T_{\delta}^{(i)}(x) \rangle + \tfrac{\Lip_i}{2}\| T_{\delta}^{(i)}(x)\|_{(i)}^2\\
     & \overset{\eqref{Vi}}{=} & V_i(x,T_\delta^{(i)}(x)) \notag\\
     & \overset{\eqref{Td_def}}{\leq} & \min\{0,\delta^{(i)} + V_i(x,T_0^{(i)}(x))\} \notag\\
\label{S_SmoothConvex_fdiff}
&\overset{\eqref{Smooth_valVi}}{=}& \min\{0,\delta^{(i)} - \tfrac{1}{2\Lip_i}\big(\|\nabla_i f(x)\|_{(i)}^*\big)^2\}.
\end{eqnarray}
The main iteration complexity result of this section can be now established.
\begin{theorem}\label{Thm:C-S}
     Choose an initial point $x_0\in \R^N$ and let $\{x_k\}_{k\geq 0}$ be the random iterates generated by ICD applied to the problem of minimizing $f$, used with probabilities $p_1,\dots,p_n >0$ and inexactness parameters $\delta_k^{(1)},\dots, \delta_k^{(n)}\geq 0$ satisfying \eqref{eq:alpha-beta} for $\alpha,\beta \geq 0$, where $\alpha^2 +\tfrac{4\beta}{c_1}<1$ and $c_1 =2\mathcal{R}_{\Lip p^{-1}}^2(x_0)$. Choose target confidence $\rho \in (0,1)$, error tolerance $\epsilon$ satisfying $\tfrac{c_1}{2}(\alpha + \sqrt{\alpha^2 + \tfrac{4\beta}{c_1 \rho}}) < \epsilon < f(x_0)-f^*$,  and let the iteration counter $K$ be given by \eqref{Thm_Ki}. Then $\Prob(f(x_K) - f^* \leq \epsilon) \geq 1-\rho$.
\end{theorem}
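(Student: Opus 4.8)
The plan is to follow the same template used in the proof of Theorem~\ref{Thm:C-N}(i), now specialized to the smooth case $\Psi\equiv 0$, $F\equiv f$. The goal is to produce a one-step recursion of the form $\E[\xi_{k+1}\mid x_k]\le (1+\alpha)\xi_k - \xi_k^2/c_1 + \beta$ with $\xi_k := f(x_k)-f^*$ and $c_1 = 2\mathcal{R}_{\Lip p^{-1}}^2(x_0)$, after which Theorem~\ref{Theorem1}(i) delivers the conclusion. Monotonicity of $\{\xi_k\}$ needed by Theorem~\ref{Theorem1} follows from \eqref{eq:111} (with $\Psi=0$), and nonnegativity is automatic since $f(x_k)\ge f^*$.

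First I would estimate the expected decrease in one step. Taking expectation over the random block $i$ (chosen with probability $p_i$) in \eqref{S_SmoothConvex_fdiff} gives
\begin{equation*}
\E[f(x_{k+1})\mid x_k] - f(x_k) \le \sum_{i=1}^n p_i \min\Big\{0,\ \delta_k^{(i)} - \tfrac{1}{2\Lip_i}\big(\|\nabla_i f(x_k)\|_{(i)}^*\big)^2\Big\} \le \bar\delta_k - \tfrac12 \sum_{i=1}^n \tfrac{p_i}{\Lip_i}\big(\|\nabla_i f(x_k)\|_{(i)}^*\big)^2,
\end{equation*}
where I used $\min\{0,a-b\}\le a - b$ and the definition $\bar\delta_k = \sum_i p_i\delta_k^{(i)}$. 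Recognizing the weighted norm \eqref{S_Norms_1} with weight vector $w=\Lip p^{-1}$, the sum equals $(\|\nabla f(x_k)\|_{\Lip p^{-1}}^*)^2$, so $\E[\xi_{k+1}\mid x_k] \le \xi_k + \bar\delta_k - \tfrac12(\|\nabla f(x_k)\|_{\Lip p^{-1}}^*)^2$.

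Next I would bound the gradient-norm term below by a quadratic in $\xi_k$ using convexity and the level-set radius. By convexity of $f$, for any $x^*\in X^*$, $f(x_k)-f^* \le \langle \nabla f(x_k), x_k - x^*\rangle \le \|\nabla f(x_k)\|_{\Lip p^{-1}}^* \,\|x_k-x^*\|_{\Lip p^{-1}} \le \|\nabla f(x_k)\|_{\Lip p^{-1}}^* \,\mathcal{R}_{\Lip p^{-1}}(x_0)$, where the last step uses $f(x_k)\le f(x_0)$ (monotonicity) together with the definition \eqref{eq_R} of $\mathcal{R}_{\Lip p^{-1}}(x_0)$. Squaring and rearranging yields $(\|\nabla f(x_k)\|_{\Lip p^{-1}}^*)^2 \ge \xi_k^2/\mathcal{R}_{\Lip p^{-1}}^2(x_0) = 2\xi_k^2/c_1$. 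Substituting back gives $\E[\xi_{k+1}\mid x_k] \le \xi_k + \bar\delta_k - \xi_k^2/c_1 \le (1+\alpha)\xi_k - \xi_k^2/c_1 + \beta$ by Assumption~\ref{assumption}, which is exactly hypothesis (i) of Theorem~\ref{Theorem1}. The remaining hypotheses of Theorem~\ref{Theorem1}(i) — namely $\tfrac{c_1}{2}(\alpha+\sqrt{\alpha^2+4\beta/(c_1\rho)}) < \epsilon < \min\{(1+\alpha)c_1,\xi_0\}$ and $\sigma = \sqrt{\alpha^2+4\beta/c_1}<1$ — are ensured by the assumptions $\alpha^2+4\beta/c_1<1$, $\tfrac{c_1}{2}(\alpha+\sqrt{\alpha^2+4\beta/(c_1\rho)})<\epsilon<f(x_0)-f^*$, once one checks $\epsilon < (1+\alpha)c_1$ (this holds since $\epsilon < c_1$, as $\sigma<1$ forces $\tfrac{c_1}{2}(\alpha+\sigma) < c_1$ and $\epsilon$ is... actually here one should note $\epsilon < f(x_0)-f^* = \xi_0$ and separately that the lower bound on $\epsilon$ combined with $\sigma<1$ keeps things consistent; I would verify $\epsilon<c_1$ directly from the stated bound). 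Then Theorem~\ref{Theorem1}(i) with the stated $K$ from \eqref{Thm_Ki} gives $\Prob(\xi_K\le\epsilon)\ge 1-\rho$, i.e.\ $\Prob(f(x_K)-f^*\le\epsilon)\ge 1-\rho$.

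The main obstacle is essentially bookkeeping rather than conceptual: one must make sure the weighted norm appearing after taking expectation is precisely $\|\cdot\|_{\Lip p^{-1}}^*$ as in \eqref{S_Norms_1}, and that the conditions on $\epsilon$, $\alpha$, $\beta$ in the theorem statement translate exactly into the hypotheses of Theorem~\ref{Theorem1}(i) — in particular confirming the upper bound $\epsilon < \min\{(1+\alpha)c_1, \xi_0\}$ and the condition $\sigma<1$. The only genuinely delicate point is the use of the monotonicity $f(x_k)\le f(x_0)$ to invoke the level-set radius bound $\|x_k-x^*\|_{\Lip p^{-1}}\le\mathcal{R}_{\Lip p^{-1}}(x_0)$; this is justified by Lemma with \eqref{eq:111}. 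Everything else is a direct transcription of the convex-case argument.
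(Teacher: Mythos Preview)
Your proposal is correct and follows essentially the same route as the paper's own proof: take expectation of the per-block decrease \eqref{S_SmoothConvex_fdiff} to obtain the dual-norm term $\tfrac12(\|\nabla f(x_k)\|_{\Lip p^{-1}}^*)^2$, bound it below via convexity and the level-set radius to get the quadratic recursion, and then invoke Theorem~\ref{Theorem1}(i). The paper is in fact terser than you are about verifying the side condition $\epsilon<(1+\alpha)c_1$, so your flagging of that bookkeeping point is appropriate rather than a defect.
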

\begin{proof}
     We first estimate the expected decrease of the objective function during one iteration of the method:
\begin{eqnarray}
\notag
     \E[f(x_{k+1})\;|\;x_k] &=& f(x_k) +\sum_{i=1}^n p_i [f(x_k+U_i T_{\delta_k}^{(i)}(x_k)) - f(x_k)]\\
\notag
&\overset{\eqref{S_SmoothConvex_fdiff}}{\leq}&  f(x_k) + \sum_{i=1}^n p_i \left( \delta_k^{(i)} - \tfrac{1}{2\Lip_i}\big(\|\nabla_i f(x_k)\|_{(i)}^*\big)^2\right)\\
\notag
&\overset{\eqref{S_Norms_1}}{=}& f(x_k) - \tfrac{1}{2}\big(\|\nabla f(x_k)\|_{lp^{-1}}^*\big)^2 + \sum_{i=1}^n p_i \delta_k^{(i)}\\
\label{eq_sc}
&\leq& f(x_k) - \tfrac{1}{2} \big(\|\nabla f(x_k)\|_{lp^{-1}}^*\big)^2 + \alpha(f(x_k)-f^*) + \beta.
\end{eqnarray}
Since $f(x_k) \leq f(x_0)$ for all $k$,
\begin{equation}
\label{eq_fR}
 f(x_k)-f^*\leq \max_{x^* \in X^*} \langle \nabla f(x_k) , x_k - x^*\rangle\leq \|\nabla f(x_k)\|_{lp^{-1}}^* \mathcal{R}_{lp^{-1}}(x_0).
\end{equation}
Substituting \eqref{eq_fR} into \eqref{eq_sc} we obtain
\begin{equation}
\label{To_rearrange}
     \E[f(x_{k+1})-f^*\;|\;x_k] \leq  f(x_k) - f^* - \tfrac{1}{2} \left( \tfrac{f(x_k) - f^*}{\mathcal{R}_{\Lip p^{-1}}(x_0)}\right)^2 + \alpha(f(x_k)-f^*) + \beta.
\end{equation}
It remains to apply Theorem~\ref{Theorem1}(i).
\end{proof}

\subsection{Strongly convex case}

In this section we assume that $f$ is strongly convex with respect to $\|\cdot\|_{\Lip \probs^{-1}}$ with convexity parameter $\mu_f(\Lip\probs^{-1})$. Using \eqref{strongly_convex_1} with $x = x_k$ and $y = x^*$, and letting $h = x^*-x_k$, we obtain
\begin{eqnarray}
    \notag
    f^* - f(x_k) &\geq& \langle \nabla f(x_k),h\rangle + \tfrac{\mu_f(\Lip \probs^{-1})}{2}\|h\|_{\Lip \probs^{-1}}^2\\
    \label{S_SmoothSConvex_1}
    &=& \mu_f(\Lip \probs^{-1})\left(\langle \tfrac{1}{\mu_f(\Lip \probs^{-1})}\nabla f(x_k),h\rangle + \tfrac{1}{2}\|h\|_{\Lip \probs^{-1}}^2\right).
\end{eqnarray}
By minimizing the right hand side of \eqref{S_SmoothSConvex_1}, and rearranging, we obtain
\begin{equation}
\label{S_SmoothSConvex_2}
     f(x_k)-f^* \leq \frac{1}{2\mu_f(\Lip \probs^{-1})}(\|\nabla f(x_k)\|_{\Lip \probs^{-1}}^*)^2.
\end{equation}
We can now give an efficiency estimate for the case of a strongly convex objective.

\begin{theorem}\label{Thm:SC-S} Let $f$ be strongly convex with respect to the norm $\|\cdot \|_{\Lip \probs^{-1}}$ with convexity parameter $\mu_f(\Lip \probs^{-1})>0$. Choose an initial point $x_0\in \R^N$ and let $\{x_k\}_{k\geq 0}$ be the random iterates generated by ICD applied to the problem of minimizing $f$, used with  probabilities $p_1,\dots,p_n>0$ and inexactness parameters $\delta_k^{(1)},\dots, \delta_k^{(n)}\geq 0$ that satisfy \eqref{eq:alpha-beta} for $0\leq\alpha< \mu_f(\Lip\probs^{-1})$ and $\beta \geq 0$. Choose the target confidence $\rho \in (0,1)$, let the target accuracy $\epsilon$ satisfy $\frac{\beta}{\rho(\mu_f(\Lip\probs^{-1}) - \alpha)} < \epsilon < f(x_0)-f^*$, let $c_2 = 1/\mu_f(\Lip \probs^{-1})$ and let iteration counter $K$ be as in \eqref{Thm_Kii}. Then $\Prob(f(x_K) - f^* \leq \epsilon) \geq 1- \rho$.
\end{theorem}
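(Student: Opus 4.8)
The plan is to reduce the statement to Theorem~\ref{Theorem1}(ii) by exhibiting the required contraction for $\xi_k \eqdef f(x_k) - f^*$. First I would record that the process $\{x_k\}$ meets the structural hypotheses of Theorem~\ref{Theorem1}: the iterate $x_{k+1}$ depends on $x_k$ only, the sequence $\{\xi_k\}$ is nonnegative since $f(x_k) \geq f^*$, and it is nonincreasing by the monotonicity inequality \eqref{eq:111} applied with $\Psi \equiv 0$.

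Next I would establish the one-step inequality. Subtracting $f^*$ from \eqref{eq_sc} and then bounding the gradient term from below via the strong convexity estimate \eqref{S_SmoothSConvex_2}, namely $(\|\nabla f(x_k)\|_{\Lip \probs^{-1}}^*)^2 \geq 2\mu_f(\Lip \probs^{-1})\,\xi_k$, gives
\[
\E[\xi_{k+1}\mid x_k] \;\leq\; \xi_k - \mu_f(\Lip \probs^{-1})\,\xi_k + \alpha\,\xi_k + \beta \;=\; \Big(1 + \alpha - \tfrac{1}{c_2}\Big)\xi_k + \beta,
\]
with $c_2 = 1/\mu_f(\Lip \probs^{-1})$. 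This is exactly the recursion assumed in Theorem~\ref{Theorem1}(ii), and in fact it holds for all $k \geq 0$, so a fortiori for those $k$ with $\xi_k \geq \epsilon$.

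It then remains to verify the quantitative hypotheses of Theorem~\ref{Theorem1}(ii). The condition $\alpha c_2 < 1$ is the assumption $\alpha < \mu_f(\Lip \probs^{-1})$; the condition $(1+\alpha)c_2 \geq 1$ is equivalent to $\mu_f(\Lip \probs^{-1}) \leq 1 + \alpha$, which follows from $\mu_f(\Lip \probs^{-1}) < 1$ in \eqref{eq_lipp1}. Finally $\tfrac{\beta c_2}{\rho(1-\alpha c_2)} = \tfrac{\beta}{\rho(\mu_f(\Lip \probs^{-1}) - \alpha)}$, so the imposed bound $\tfrac{\beta}{\rho(\mu_f(\Lip \probs^{-1}) - \alpha)} < \epsilon < f(x_0)-f^* = \xi_0$ is precisely $\tfrac{\beta c_2}{\rho(1-\alpha c_2)} < \epsilon < \xi_0$. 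Applying Theorem~\ref{Theorem1}(ii) with $K$ as in \eqref{Thm_Kii} then yields $\Prob(\xi_K \leq \epsilon) \geq 1-\rho$, which is the claim. There is no real obstacle in this proof — the work was done in deriving \eqref{S_SmoothSConvex_2}, \eqref{eq_sc} and Theorem~\ref{Theorem1}; the only point needing minor care is choosing $c_2 = 1/\mu_f(\Lip \probs^{-1})$ so the $\epsilon$-inequalities match, and invoking \eqref{eq_lipp1} to get $(1+\alpha)c_2 \geq 1$.
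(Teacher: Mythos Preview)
Your proof is correct and follows essentially the same route as the paper: subtract $f^*$ from \eqref{eq_sc}, apply the strong convexity inequality \eqref{S_SmoothSConvex_2} to obtain $\E[\xi_{k+1}\mid x_k]\le (1+\alpha-\mu_f(\Lip p^{-1}))\xi_k+\beta$, and then invoke Theorem~\ref{Theorem1}(ii) with $c_2=1/\mu_f(\Lip p^{-1})$. If anything, you are slightly more explicit than the paper in verifying the side conditions $\alpha c_2<1\le(1+\alpha)c_2$ via \eqref{eq_lipp1} and the monotonicity/nonnegativity hypotheses, which is fine.
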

\begin{proof}
     The expected decrease of the objective function during one iteration of the method can be estimated as follows:
\begin{eqnarray*}
     \E[f(x_{k+1}) - f^*|x_k] &\overset{\eqref{eq_sc}}{\leq}& (1+\alpha)(f(x_k) -f^*) - \frac{1}{2}(\|\nabla f(x_k)\|_{\Lip \probs^{-1}}^*)^2 + \beta\\
&\overset{\eqref{S_SmoothSConvex_2}}{\leq}& (1+\alpha - \mu_f(\Lip \probs^{-1}))(f(x_k) -f^*)  + \beta
\end{eqnarray*}
It remains to apply Theorem~\ref{Theorem1}(ii) with $\varphi(x_k) = f(x_k) - f^*$ (and notice that $c_2 >1$ by \eqref{eq_lipp1}).
\end{proof}

\section{Practical aspects of an inexact update}
\label{Section_Practical}
The goal of the second part of this paper is to demonstrate the practical importance of employing an inexact update in the (block) ICD method.

\subsection{Solving smooth problems via ICD}

In the first part of this section we assume that $\Psi=0$, so the function $F(x) = f(x)$ is smooth and convex. In this case the overapproximation is
\begin{equation}
\label{overapprox_Bi}
      F(x_k+U_i t) = f(x_k+U_i t) \overset{\eqref{S2_upperbound}+ \eqref{S2_Norm_def}}{\leq} f(x_k) + \langle \nabla_i f(x_k), t\rangle + \tfrac{l_i}{2}\langle B_i t , t\rangle \equiv f(x_k) + V_i(x_k,t).
\end{equation}
Differentiating \eqref{overapprox_Bi} with respect to $t$ and setting the result to $0$, shows that determining the update to block $i$ at iteration $k$ is equivalent to solving the system of equations
\begin{equation}
\label{eqn_solving_Bi}
     B_i t = - \tfrac{1}{l_i}  \nabla_i f(x_k).
\end{equation}
Recall that $B_i$ is positive definite so the exact update is
\begin{equation}
\label{Equation_T0}
  T_0\ii(x_k) = - \tfrac{1}{l_i} B_i^{-1} \nabla_i f(x_k),
\end{equation}
and, as mentioned in Section \ref{S_VerifyVi}, $V_i(x_k,T_0\ii(x_k)) = 0$. Clearly, solving systems of equations is central to the block coordinate descent method in the smooth case.

Exact CD \cite{Richtarik11a} requires the exact update \eqref{Equation_T0}, which depends on the inverse of an $N_i \times N_i$ matrix.
A standard approach to solving for $T_0\ii(x_k)$ in \eqref{Equation_T0} is to form the Cholesky factors of $B_i$ followed by two triangular solves. This can be extremely expensive for medium $N_i$, or dense $B_i$.

The results in this work allow \eqref{eqn_solving_Bi} to be solved using an iterative method to find an \emph{inexact} update $T_{\delta_k}\ii(x_k)$. If we compute $t$ for which
$V_i(x_k,t) - V_i(x_k,T_0\ii(x_k)) = V_i(x_k,t) = \|B_i t - \tfrac{1}{l_i}\nabla_i f(x_k)\|_2^2 \leq \beta,$ then we terminate the iterative method and accept the inexact update $T_{\delta_k}\ii \equiv t$.\footnote{Note that if $f$ is a quadratic function corresponding to a consistent system of equations, then we could have used the more general stopping condition $\alpha F(x_k)+\beta$, because then we also know that $F^*=0$.}

Because $B_i$ is positive definite, a natural choice is to solve \eqref{overapprox_Bi} using conjugate gradients \cite{Hestenes52}. (This is the method we adopt in the numerical experiments presented in Section \ref{Section_Numerical}). It is widely accepted that using an iterative technique has many advantages over a direct method for solving systems of equations, so we expect that an inexact update can be determined quickly, and subsequently the overall ICD algorithm running time reduces. Moreover, applying a preconditioner to \eqref{overapprox_Bi} can enable even faster convergence of conjugate gradients. Finding good preconditioners is an active area of research; see for example \cite{Benzi05,Golub99,Gratton11}.

\subsubsection{A special case: a quadratic function}
\label{Section_quadratic}
A special case of the above is when we have the unconstrained quadratic minimization problem
\begin{equation}
\label{eq_fprob}
     \min_{x \in \R^N} f(x) = \tfrac{1}{2} \|A x - b\|_2^2,
\end{equation}
where $A \in \mathbb{R}^{M \times N}$, and $b \in \R^M$. In this case, the overapproximation \eqref{S2_upperbound} becomes
\begin{eqnarray}
\label{ub_exact}
  f(x + U_it) = \tfrac{1}{2}\|A(x+U_it) - b\|_2^2
  = f(x) + \langle \nabla_i f(x) ,t \rangle + \tfrac{1}{2}\langle A_i^TA_it ,t \rangle,
\end{eqnarray}
where $A_i = U_i A$.
Comparing \eqref{ub_exact} with \eqref{overapprox_Bi}, we see that in the quadratic case, \eqref{ub_exact} is an exact upper bound on $f(x + U_it)$ if we choose $l_i = 1$ and $B_i = A_i^TA_i$ for all blocks $i = 1,\dots,n$. The matrix $B_i$ is required to be (strictly) positive definite so $A_i$ is assumed to have full (column) rank.\footnote{If a block $\Ai$ does not have full column rank then we simply adjust our choice of $l_i$ and $B_i$ accordingly, although this means that we have an overapproximation to $f(x+U_it)$, rather than equality as in \eqref{ub_exact}.} Substituting $l_i = 1$ and $B_i = A_i^TA_i$ into \eqref{eqn_solving_Bi} gives
\begin{eqnarray}
\label{normal_eqi}
     A_i^TA_i t = - A_i^T(Ax-b).
\end{eqnarray}
Therefore, when ICD is applied to a problem of the form \eqref{eq_fprob}, the update is found by solving \eqref{normal_eqi}.

\subsection{Solving nonsmooth problems via ICD}

The nonsmooth case is not as simple as the smooth case, because the update subproblem will have a different form for each nonsmooth term $\Psi$. However, we will see that in many cases, the subproblem will have the same, or similar, form to the original objective function. We demonstrate this through the use of the following concrete examples.

\subsubsection{Group Lasso}

A widely studied optimization problem arising in statistics and machine learning is the so-called group lasso problem, which has the form
\begin{equation}\label{GroupLasso}
  \min_{x \in \R^N} \tfrac12\|Ax-b\|_2^2 + \lambda \sum_{i=1}^n \sqrt{d_i} \|x\ii\|_2,
\end{equation}
where $\lambda >0$ is a regularization parameter and $d_i$ for all $i$ is a weighting parameter that depends on the size of the $i$th block. Formulation \eqref{GroupLasso} fits the structure \eqref{F} with $f(x) = \tfrac12 \|Ax-b\|_2^2$ and $\Psi(x) = \sum_{i=1}^n \lambda \sqrt{d_i} \|x\ii\|_2$. It can be shown that choosing $B_i = A_i^TA_i$\footnote{Here we assume that $A_i^TA_i\succ 0$} and $l_i = 1$ for all $i$, satisfies the overapproximation \eqref{S2_upperbound_F} giving
 $ F(x_k+U_i t) \leq f(x_k) + \langle A_i^T r_k , t\rangle + \tfrac12\langle A_i^TA_it,t\rangle + \lambda \sqrt{d_i} \|x_k\ii + t\|_2$,
where $r_k = Ax_k-b$, so
\begin{equation}\label{GroupLassoVi}
  V_i(x_k,t) = \tfrac12\|A_it-r_k\|_2^2 + \lambda \sqrt{d_i} \|x_k\ii+t\|_2.
\end{equation}
We see that (after a simple change of variables) \eqref{GroupLassoVi} has the same form as the original problem \eqref{GroupLasso}. We can apply \emph{any} algorithm to approximately minimize \eqref{GroupLassoVi} that uses one of the stopping conditions described in Section \ref{S_VerifyVi}.

%\subsubsection{Linear programming problems}
%
%Another example of a nonsmooth problem that can be solved using ICD is the linear programme
%\begin{equation}
%\label{LPeg}
%  \min_{x\in\R^N} \;c^Tx  \qquad \text{subject to } Ax=b, x\geq 0,
%\end{equation}
%where $A$ has block diagonal structure.
%
%At first glance, this does not fit in to the framework \eqref{F}. However, we can reformulate \eqref{LPeg} via the Lagrangian
%\begin{equation*}
%  \min_{x}\; c^Tx  + I_{-}(x),
%\end{equation*}
%where $I_-(x)$ is the indicator function onto the set $X \eqdef \{x | Ax=b, x \geq 0\}$. Now we have $f(x) = c^Tx$ and $\Psi(x) = I_-(x)$. (Notice that $\Psi$ is block separable because $A$ is block diagonal.) In this case $B_i = I$ and $l_i = 1$ for all $i$ and the update step is simply
%\begin{eqnarray}\label{tLPeg}
%  t_k = \arg \min_t \{\langle c, t\rangle + \frac{1}{2}\|t\|_2^2 + I_-(x\ii+t)\}.
%\end{eqnarray}
%Practically, we can solve \eqref{tLPeg} as follows. Given $x_0 \in X$ the update is the inexact minimizer of
%\begin{eqnarray}\label{inexacttLPeg}
%  \min_{t\in \R^{N_i}}  \frac{1}{2}\|t\|_2^2 + \langle c, t\rangle,  \qquad \text{subject to } At = 0,\; x\ii+t \geq 0.
%\end{eqnarray}
%Any algorithm that uses one of the stopping conditions described in Section \ref{S_VerifyVi} can be used to solve \eqref{inexacttLPeg}. (For example, Matlab's `quadprog' function terminates on the duality gap, and PDCO \cite{Saunders02} terminates on the first order optimality conditions.)

\section{Numerical Experiments}
\label{Section_Numerical}
In this section we present preliminary numerical results to demonstrate the practical performance of Inexact Coordinate Descent and compare the results with Exact Coordinate Descent. We note that a thorough practical investigation of Exact CD is given in \cite{Richtarik12} where its usefulness on huge-scale problems is evidenced. We do not intend to reproduce such results for ICD, rather, we investigate the affect of inexact updates compared with exact updates, which should be apparent on medium scale problems. We do this the full knowledge that if exact CD scales well to very large sizes (shown in \cite{Richtarik12}) then so too will ICD.

Each experiment presented in this section was implemented in {\sc{Matlab}} and run (under linux) on a desktop computer with a quad core i5-3470CPU, 3.20GHz processor with 24Gb of RAM.

\subsection{Problem description for a smooth objective}
\label{Section_Numerical_problemdescription}
In this numerical experiment, we assume that the function $F =f$ is quadratic \eqref{eq_fprob} and $\Psi = 0$. Further, as ICD can work with blocks of data, we impose block structure on the system matrix. In particular, we assume that the matrix $A$ has block angular structure. Matrices with this structure frequently arise in optimization, from optimal control, scheduling and planning problems to stochastic optimization problems, and exploiting this structure is an active area of research \cite{Castro11,Gondzio03,Schultz91}. To this end, we define

\begin{eqnarray}
\label{A_matrix}
     A = \left[\begin{array}{c}
  C \\
\hline
    D
 \end{array}\right]  \in \R^{M \times N},
\end{eqnarray}
with the partitioning
\begin{eqnarray}
\label{C}
     C = \begin{bmatrix}
  C_1  & & \\
& \ddots & \\
& &C_n\\
 \end{bmatrix}  \in \R^{m \times N},\quad
%\begin{eqnarray}
\label{D}
     D =
 \begin{bmatrix}
D_1  & \dots & D_n
 \end{bmatrix} \in \R^{\ell \times N} \quad
\text{and} \quad
\label{Ai_matrix}
\Ai  =  \begin{bmatrix}
\\ \Ci\\ \\ \Di
\end{bmatrix}\in \R^{M \times N_i}.
\end{eqnarray}
Moreover, we assume that each block $C_i \in \R^{M_i \times N_i}$, and the linking blocks $\Di \in \R^{\ell \times N_i}$. We assume that $\ell \ll N$, and that there are $n$ blocks with $m = \sum_{i=1}^n M_i$ so $M = m + \ell$, and $N = \sum_{i=1}^n N_i$.

Notice that if $D = \mathbf{0}$, where $\mathbf{0}$ is the $\ell \times N$ matrix of all zeros, then problem \eqref{eq_fprob} is completely (block) separable so it can be solved easily. The linking constraints $D$ make problem \eqref{eq_fprob} nonseparable, which makes it non-trivial to solve.

The system of equations \eqref{normal_eqi} must be solved at each iteration of ICD (where $B_i = \Ai^T\Ai = \Ci^T\Ci + \Di^T\Di$) because it determines the update to apply to the $i$th block. We solve this system \emph{inexactly} using an \emph{iterative method}. In particular we use the conjugate gradient method (CG) in the numerical experiments presented in this section.

It is well known that the performance of CG is improved by the use of an appropriate preconditioner. To this end, we compare ICD using CG with ICD using preconditioned CG (PCG).
If $M_i \geq N_i$ and rank($C_i) = N_i$, then the block $C_i^TC_i$ is positive definite  so we propose the preconditioner (for the $i$th system)
\begin{equation}
\label{preconditioner}
     \p  \eqdef \Ci^T\Ci.
\end{equation}

If $M_i< N_i$ then $\p$ is rank deficient and is therefore singular. In such a case, we perturb \eqref{preconditioner} by adding a multiple of the identity matrix, and propose the nonsingular preconditioner
\begin{eqnarray}
\label{preconditioner_perturbed}
     \hat{\p} =  \p + \rho I = \Ci^T\Ci + \rho  I,
\end{eqnarray}
where $\rho > 0$.

Applying the preconditioners (defined in \eqref{preconditioner} for $M_i\geq N_i$, and \eqref{preconditioner_perturbed} for $M_i<N_i$) to \eqref{normal_eqi}, should result in the system having better spectral properties than the original, and this will lead to faster convergence of the conjugate gradient algorithm. A full theoretical justification (eigenvalue analysis) for the preconditioners is presented in Appendix \ref{Section_Eigenvalues}.

\emph{Remark:} Notice that the preconditioners \eqref{preconditioner} and \eqref{preconditioner_perturbed} are likely to be significantly more sparse than $B_i$, and consequently we expect that these preconditioners will be cost effective to apply in practice. To see this, notice that the blocks $C_i$ are generally much sparser than the linking blocks $D_i$ so that $\p = C_i^TC_i$ is much sparser than $C_i^TC_i+D_i^TD_i$.

\subsubsection{Experiment parameters and results}
\label{S_Numerical_results}
The purpose of this experiment is to study the use of an iterative technique (CG or PCG) to determine the update used at each iteration of the inexact block coordinate descent method, and compare this approach with Exact CD. For Exact CD, the system \eqref{normal_eqi} was solved by forming the Cholesky Decomposition of $B_i$ for each $i$ and then performing two triangular solves to find the exact update.

In the first two experiments, simulated data was used to generate $A$ and the solution vector $x_*$. For each matrix $A$, each block $C_i$ has approximately 20 nonzeros per column, and the density of the linking constraints $D_i$ is approximately $0.1\,\ell \,N_i$. The data vector $b$ was generated from $b=Ax_*$, so the optimal value is known in advance: $F^* = 0$. The stopping condition and tolerance $\epsilon$ for ICD are: $F(x_K) -F^* = \tfrac12\|Ax_K-b\|_2^2 < \epsilon = 0.1$.

The inexactness parameters are set to $\alpha = 0$ and $\beta = 0.1$. Therefore, the update for each block is accepted when $\tfrac12\|A_i T_{\delta_k}\ii - r\|_2^2 \leq \beta = \delta_k^{(i)} = 0.1 $ for all $i,k$. Moreover, each block was chosen with uniform probability $\frac1n$ in all experiments in this section.

In the first experiment the blocks $C_i$ are tall. The incomplete Cholesky decomposition of the preconditioner $\p$ was found using {\sc{Matlab}}'s `\texttt{ichol}' function with a drop tolerance set to $0.1$. The results of this experiment are shown in the Table \ref{Experiment_Over} and all results are averages over 20 runs.

In the second experiment the blocks $C_i$ are wide.\footnote{To ensure that $C_i$ has full rank, a multiple of the identity $I_{m_i}$ is added to the first $m_i$ columns of $C_i$.} The incomplete Cholesky decomposition of the perturbed preconditioner $\hp = \p+\rho I$ (with $\rho = 0.5$) was formed was found using {\sc{Matlab}}'s `\texttt{ichol}' function with a drop tolerance set to $0.1$. The results are shown in the Table \ref{Experiment_Under} and all results are averages over 20 runs.

We briefly explain the terminology used in the tables presented in this section. `Time' represents the cpu time in seconds. Further, the term `block updates' refers to the total number of block updates computed throughout the algorithm; dividing this number by $n$ gives the number of `epochs', which is (approximately) equivalent to the total number of full dimensional matrix-vector products required by the algorithm. The abbreviation `o.o.m.' is the out of memory token.

\begin{table}[h!]\centering
\caption{Results of Exact CD, ICD with CG and ICD with PCG on a quadratic objective with block angular structure using simulated data. For all of these problems, the blocks $C_i$ are tall, and the preconditioner \eqref{preconditioner} is used for ICD with PCG. The size of $A$ ranges from $10^6 \times 10^5$ to $10^7 \times 10^6$. All results are averages over 20 runs. }
\begin{tabular}{|l  |c|c|c||c|c||c|c|c||c|c|c|}
     \hline
      \multicolumn{4}{|c||}{} & \multicolumn{2}{|c||}{Exact CD}& \multicolumn{3}{|c||}{ICD with CG} & \multicolumn{3}{|c|}{ICD with PCG}\\
      \hline
$n$ & $M_i$ & $N_i$ &$\ell$ & \begin{minipage}{1.4cm}\strut
   Block Updates
 \end{minipage} & Time &  \begin{minipage}{1.4cm}\strut
   Block Updates
 \end{minipage} & \begin{minipage}{1.4cm}\strut
   CG\\ Iterations\\[-2.3ex]
 \end{minipage}&Time &\begin{minipage}{1.4cm}\strut
   Block Updates
 \end{minipage} & \begin{minipage}{1.4cm}\strut
   PCG\\ Iterations\\[-2.3ex]
 \end{minipage}&Time \\
\hline
\hline
 100 & $10^4$ & $10^3$ & 1 & 4,820.1 & 37.42 & 4,726.3 & 15,126 & 13.95 & 5,230.6  & 11,379 & 12.59\\
 \hline
 100 & $10^4$ & $10^3$ & 10 & 7,056.7 & 53.94 & 7,181.1 & 14,480 & 17.88 & 6,864.0 & 13,516 & 15.95\\
\hline
100 & $10^4$ & $10^3$ & 100 & 19,129 & 151.97 & 19,411 & 37,841 & 46.32 & 19,446 & 41,344 & 51.12\\
\hline
\hline
10 & $10^5$ & $10^4$ & 1 & 3129.4 & 2488.2 & 3,307.5 & 5,316.4 & 64.39 & 3,246.8 & 4,201.4 & 62.71\\
\hline
10 & $10^5$ & $10^4$ & 10 & 4588 & 3738.6 & 4,753.6 & 9,907.6 & 109.79 & 4,655.4 & 7,646.8 & 104.65\\
\hline
10 & $10^5$ & $10^4$ & 100 & 12,431 & 15,302 & 15,938 & 35,943 & 446.81 & 15,417 & 29,272 & 391.12\\
\hline
\hline
100 & $10^5$ & $10^4$ & 1 & o.o.m. & o.o.m. & 44,799 & 59,340 & 821.64 & 43,427 & 49,801 & 783.11\\
\hline
100 & $10^5$ & $10^4$ & 10 & o.o.m. & o.o.m. & 63,654 & 101,163 & 1,302.0 & 59,351 & 82,097  & 1,267.3\\
\hline
100 & $10^5$ & $10^4$ & 100 & o.o.m. & o.o.m. & 207,314 & 329276 & 4982.8 & 204070 & 302,308 & 4806.1\\
\hline
\end{tabular}
\label{Experiment_Over}
\end{table}

The results presented in Table \ref{Experiment_Over} show that ICD with either CG or PCG  significantly outperforms Exact CD in terms of cpu time. When the blocks are of size $M_i \times N_i = 10^4 \times 10^3$, ICD is approximately 3 times faster than Exact CD. The results are even more striking as the block size increases. Notice that ICD was able to solve problems of all sizes, whereas Exact CD ran out of memory on the problems of size $10^7 \times 10^6$. Further, we notice that PCG is faster than CG in terms of cpu time, demonstrating the benefits of preconditioning. These results strongly support the ICD method.

\begin{table}[h!]\centering
\caption{Results of Exact CD, ICD with CG and ICD with PCG on a quadratic objective with block angular structure using simulated data. For all of these problems, the blocks $C_i$ are wide, and the preconditioner \eqref{preconditioner_perturbed} with $\rho = 0.5$ is used for ICD with PCG. The size of $A$ ranges from $10^5 \times 10^5$ to $10^6 \times 10^6$. All results are averages over 20 runs.}
\begin{tabular}{|c  |c|c|c||c|c||c|c|c||c|c|c|}
     \hline
      \multicolumn{4}{|c||}{} & \multicolumn{2}{|c||}{Exact CD}& \multicolumn{3}{|c||}{ICD with CG} & \multicolumn{3}{|c|}{ICD with PCG}\\
      \hline
$n$ & $M_i$ & $N_i$ &$\ell$ & \begin{minipage}{1.4cm}\strut
   Block Updates
 \end{minipage} & Time &  \begin{minipage}{1.4cm}\strut
   Block Updates
 \end{minipage} & \begin{minipage}{1.4cm}\strut
   CG\\ Iterations\\[-2.3ex]
 \end{minipage}&Time &\begin{minipage}{1.4cm}\strut
   Block Updates
 \end{minipage} & \begin{minipage}{1.4cm}\strut
   PCG\\ Iterations\\[-2.3ex]
 \end{minipage}&Time \\
\hline
\hline
10 & $9,999$ & $10^4$ & 1 & 34.2 & 190.62 & 821.2 & 1957 & 12.55 & 471.4 & 1597 & 9.29\\
\hline
10 & $9,990$ & $10^4$ & $10$ & 31.3 & 191.96 & 1,500.8 & 4,793.3 & 45.81 & 867.7 & 3612 & 24.55\\
\hline
10 & $9,000$ & $10^4$ & $10^3$ & 25.5 & 287.79 & 703.6 & 4,052.8 & 58.31 & 439.0 & 4,309.8 & 46.74\\
\hline
10 & $7,500$ & $10^4$ & $2,500$ & 39.7 & 336.69 & 532.0 & 3183 & 76.77 & 386.5 & 4592 & 70.09\\
\hline
\hline
100 & $9,999$ & $10^4$ & 1 & o.o.m. & o.o.m. & 13077 & 27321 & 185.31 & 8280 & 25715 & 143.63\\
\hline
100 & $9,900$ & $10^4$ & $10^2$ & o.o.m. & o.o.m. & 12,979 & 50,685 & 397.47 & 6,159 & 47,034 & 245.89\\
\hline
100 & $9,000$ & $10^4$ & $10^3$ & o.o.m. & o.o.m. & 6974 & 39535 & 453.18 & 4797 & 52,665 & 496.35\\
\hline
100 & $7,500$ & $10^4$ & $2,500$ & o.o.m. & o.o.m. & 4936 & 28986 & 542.69 & 4246 & 57001 & 740.75\\
\hline
\end{tabular}
\label{Experiment_Under}
\end{table}

The results presented in Table \ref{Experiment_Under} show that ICD outperforms Exact CD. ICD is able to solve all problem instances, whereas Exact CD gives the out of memory token on the large problems. We see that when $\ell$ is small, ICD with PCG has an advantage over ICD with CG. However, when $\ell$ is large, the the preconditioner $\hp$ is not as good an approximation to $A_i^TA_i$ and so ICD with CG is preferable.

\emph{Remark:} Notice that in several of the numerical experiments, Exact CD returned the out of memory token. Exact CD requires the matrices $B_i = C_i^TC_i + D_i^TD_i$ for all $i$ to be formed explicitly, and the Cholesky factors to be found and stored. Even if $A_i$ is sparse, $B_i$ need not be, and the Cholesky factor could be dense, making it very expensive to work with.  Moreover, this problem does not arise for ICD with CG (and arises to a much lesser extent for PCG) because $B_i$ is never explictly formed. Instead, only sparse matrix vector products: $B_ix \equiv C_i^T(C_i x) + D_i^T(D_i x)$ are required. This is why ICD performs extremely well, even when the blocks are very large.

\subsubsection{Real-world data}

In the third experiment we test ICD on a quadratic objective with block angular structure, where the matrices arise from real-world applications. In particular, we have taken several matrices from the Florida Sparse Matrix Collection \cite{Davis11} that have block angular structure. The matrices used are given in Table \ref{Florida_matrices_specifications}. Note that in each case we have taken the transpose of the original matrix to ensure that the matrix is tall. Further, in each case the upper block (recall \eqref{A_matrix}) is diagonal, so we have scaled each of the matrices so that $C = I$. Note that in this case $\p = I$ so there is no need for preconditioning. We compare Exact CD with ICD using CG. All the stopping conditions and algorithm parameters are the same as those given in Section \ref{S_Numerical_results}.
\begin{table}[h!]\centering
\caption{Block angular matrices from the Florida Sparse Matrix Collection \cite{Davis11}. (\texttt{cep1} is from the Meszaros Group while all others are from the Mittelmann Group.) Note that the dimensions given in the table are for the \emph{transpose} of the original test matrix.}
\begin{tabular}{|l |c |c |c|}
     \hline
& $M$ & $N$ & $\ell$\\
\hline
\texttt{cep1} & 4769 & 1521 & 3,248 \\
\texttt{neos} & 515,905 & 479,119 & 36,786\\
\texttt{neos1} & 133,473 & 131,528 & 1,945\\
\texttt{neos2} & 134,128 & 132,568 & 1,560\\
\texttt{neos3} & 518,832 & 512,209 & 6,623\\
\hline
\end{tabular}
\label{Florida_matrices_specifications}
\end{table}

The results of the numerical experiments on these matrices are shown in Table \ref{Experiment_UFSMC}. (To determine $n$ (the number of blocks) and $N_i$ the size of the blocks, we have simply taken the prime factorization of $N$.) ICD with CG performs extremely well on these test problems. In most cases ICD with CG needs more iterations than Exact CD to converge, yet ICD requires only a fraction of the cpu time needed by Exact CD.

\begin{table}[h!]\centering
\caption{Results showing the performance of Exact CD and ICD with CG applied to a quadratic function with the block angular matrices described in Table \ref{Florida_matrices_specifications}. For the small problem \texttt{cep1}, Exact CD is the best algorithm. For all other matrices, ICD with CG is significantly better than Exact CD in terms of the cpu time. }
\begin{tabular}{|l  |c|c||c|c||c|c|c|}
     \hline
      \multicolumn{3}{|c||}{} & \multicolumn{2}{|c||}{Exact CD}& \multicolumn{3}{|c|}{ICD with CG} \\
      \cline{2-8}
 & $n$ & $N_i$ & \begin{minipage}{1.4cm}\strut
   Block Updates
 \end{minipage} & Time &  \begin{minipage}{1.4cm}\strut
   Block Updates
 \end{minipage} & \begin{minipage}{1.4cm}\strut
   CG\\ Iterations\\[-2.3ex]
 \end{minipage}&Time\\
\hline
\hline
\multirow{2}{*}{\texttt{cep1}} &  9 & 169  & 446 & 0.18 & 448  &828 &0.61 \\
\cline{2-8}
& 3 & 507 & 376 & 0.29 & 342 & 678 & 0.52 \\
\hline
\texttt{neos}  & 283 & 1,693 & 622,659  & 3,258.8 & 869,924 & 3,919,172 & 2,734.65\\
\hline
\multirow{2}{*}{\texttt{neos1}}  & 41 & 3,208  & 148,228 & 8,759.6 &  143,156 & 592,070 & 773.70\\
\cline{2-8}
& 8 & 16,441 & 25,503 & 52,113 & 25,853 & 116,468 & 446.26\\
\hline
\multirow{2}{*}{\texttt{neos2}}  & 73 & 1,816 & 329,749 & 4,669.1 & 439,296 & 1,825,835 & 997.04\\
\cline{2-8}
& 8 & 16,571 & 82,784 & 11,518 & 55,414 & 255,129 &  972.27\\
\hline
\texttt{neos3}  & 107 & 4,787 & 81,956 & 9,032.1 & 82,629 & 433,354 & 700.82\\
\hline
\end{tabular}
\label{Experiment_UFSMC}
\end{table}

\subsection{A numerical experiment for a nonsmooth objective}

In this numerical experiment we consider the $l_1$-regularized least squares problem
\begin{equation}
\label{Problem_CS}
  \min_{x\in \R^N} \frac{1}{2} \|Ax - b\|_2^2 + \lambda \|x\|_1,
\end{equation}
where $A \in \R^{M \times N}$, $b \in \R^M$ and $\lambda >0$. Problem \eqref{Problem_CS} fits into the framework \eqref{F} with $f = \frac{1}{2} \|Ax - b\|_2^2$ and $\Psi = \lambda \|x\|_1 = \lambda \sum_{i=1}^n \| x\ii \|_1$. For this experiment we set $B_i = A_i^TA_i$ and $l_i = 1$ for $i = 1,\dots,n$. (It can be shown that this choice of $B_i$ and $l_i$ satisfy the overapproximation \eqref{S2_upperbound}.) Further, for this experiment we use uniform probabilities, $p_i = \frac1n$ for all $i$, and we set $\alpha = 0$ and $\beta > 0$. The algorithm stopping condition is $F(x_k)-F^*< \epsilon = 10^{-4}$, (the data was constructed so that $F^*$ is known), and the regularization parameter was set to $\lambda= 0.01$.

The exact update for the $i$th block is computed via
\begin{equation}
\label{Eqn_l1_subprob}
  %T_{0}\ii \overset{\eqref{Vi}}{=} \arg \min_t
  V_i(x_k,t) = \langle A_i^Tr_k, t\rangle  + \frac{1}{2}t^TA_i^TA_i t + \lambda \|x_k\ii + t\|_1 = \frac12 \|A_it + r_k\|_2^2 + \lambda \|x_k\ii + t\|_1,
\end{equation}
where $r_k \eqdef Ax_k - b$ and $\nabla_i f(x) = A_i^Tr_k$. Notice that \eqref{Eqn_l1_subprob} \emph{does not have a closed form solution}, meaning that only an \emph{inexact update} can be used in this case. Recall that the inexact update must satisfy \eqref{Td_def}, and for \eqref{Eqn_l1_subprob}, we do not know the optimal value $V_i(x_k,T_0\ii)$. In this case, to ensure that \eqref{Td_def} is satisfied, we simply find the inexact update $T_{\delta_k}\ii$ using an algorithm that terminates on the duality gap. That is, we accept $T_{\delta_k}\ii$ using a stopping condition of the same form as that given by \eqref{StoppingCondition_Dual}.

In the numerical experiments presented in this section, we use the BCGP algorithm \cite{Tappenden11} to solve for the update at each iteration of ICD. This is a gradient based method that solves problems of the form \eqref{Eqn_l1_subprob}, and terminates on the duality gap.

We conduct two numerical experiments. In the first experiment $A$ is of size $0.5N \times N$ where $N = 10^5$. In this case \eqref{Problem_CS} is convex (but not strongly convex.) This means that the complexity result of Theorem \ref{Thm:C-N} applies. In the second experiment $A$ is of size $2N \times N$ where $N = 10^5$. In this case \eqref{Problem_CS} is strongly convex, and the complexity result of Theorem \ref{Thm_ICD_strong} apply.

The purpose of these experiments is to investigate the effect of different levels of inexactness (different values of $\beta$) on the algorithm runtime. In particular we used three different values: $\beta \in \{10^{-4},10^{-6},10^{-8}\}$. To make this a fair test, for each problem instance, the block ordering was fixed in advance. (i.e., before the algorithm begins we form and store a vector whose $k$th element is a index between 1 and $n$ that has been chosen with uniform probability, corresponding to the block to be updated at iteration $k$ of ICD.) Then, ICD was run three times using this block ordering, once for each value of $\beta \in \{10^{-4},10^{-6},10^{-8}\}$. In all cases we use $\delta_k\ii = \beta$ for all $i$ and $k$.

\begin{figure}[h!]\centering
  \includegraphics[width=7cm]{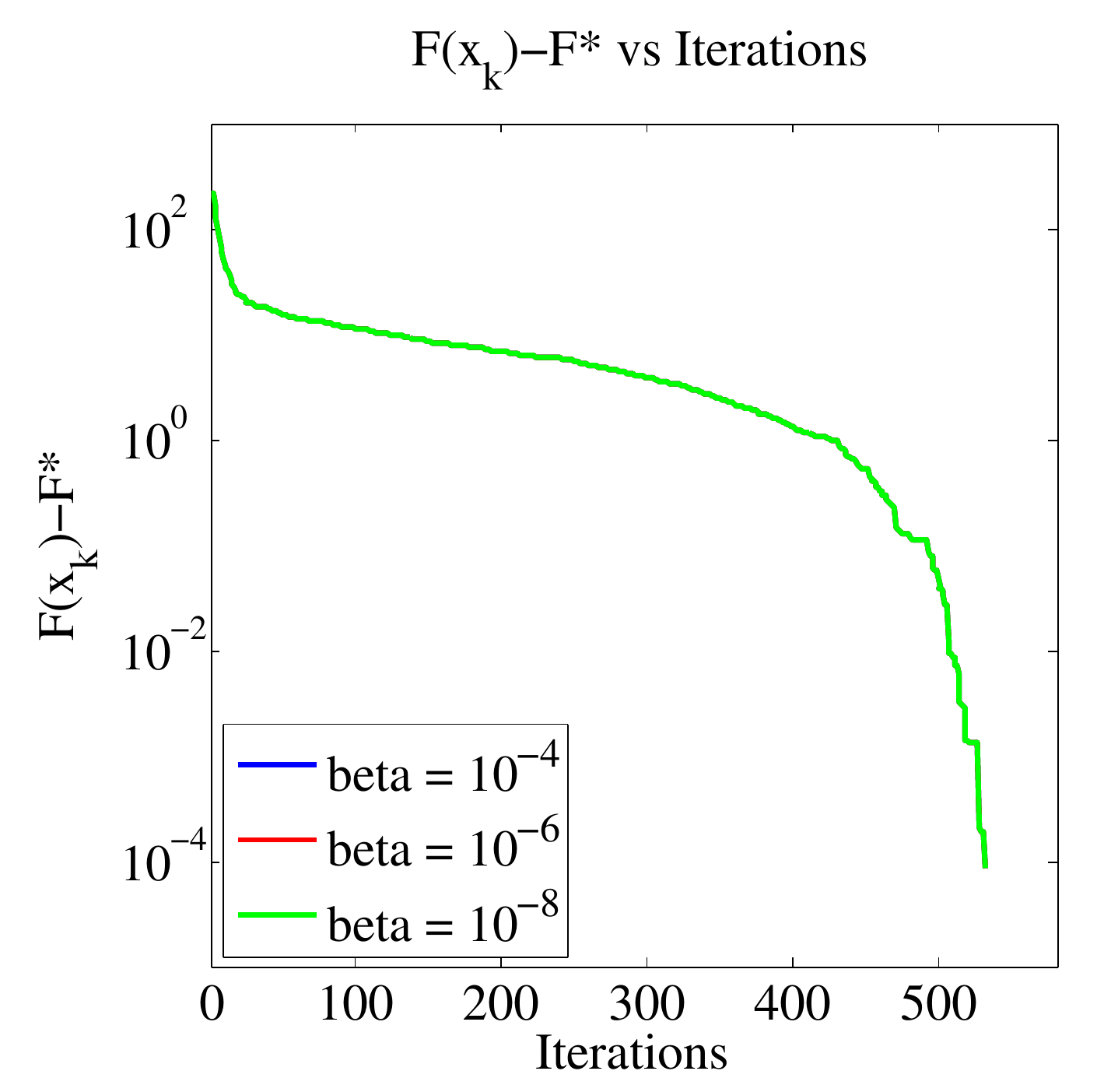}
\includegraphics[width=7cm]{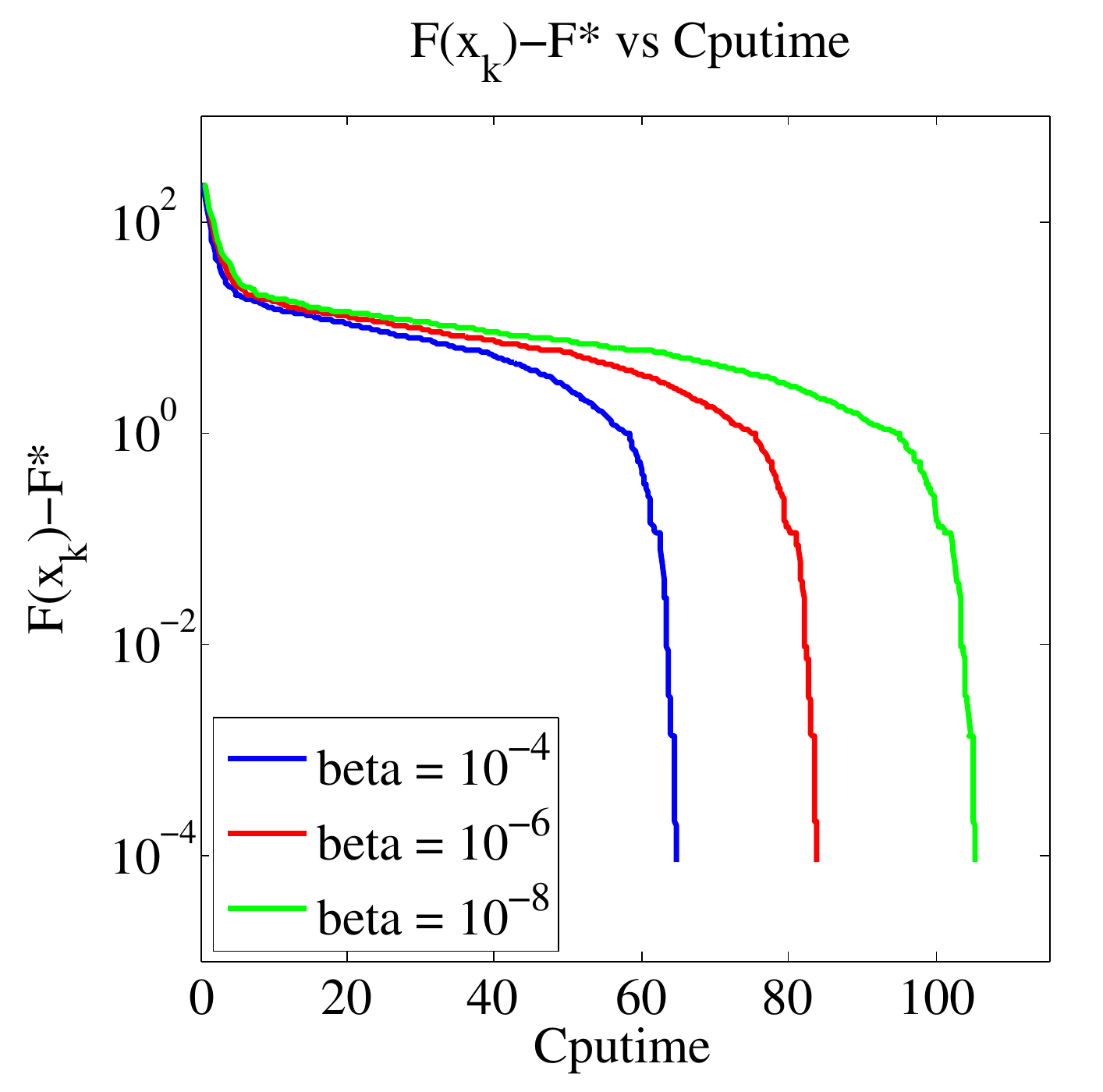}\\
\includegraphics[width=7cm]{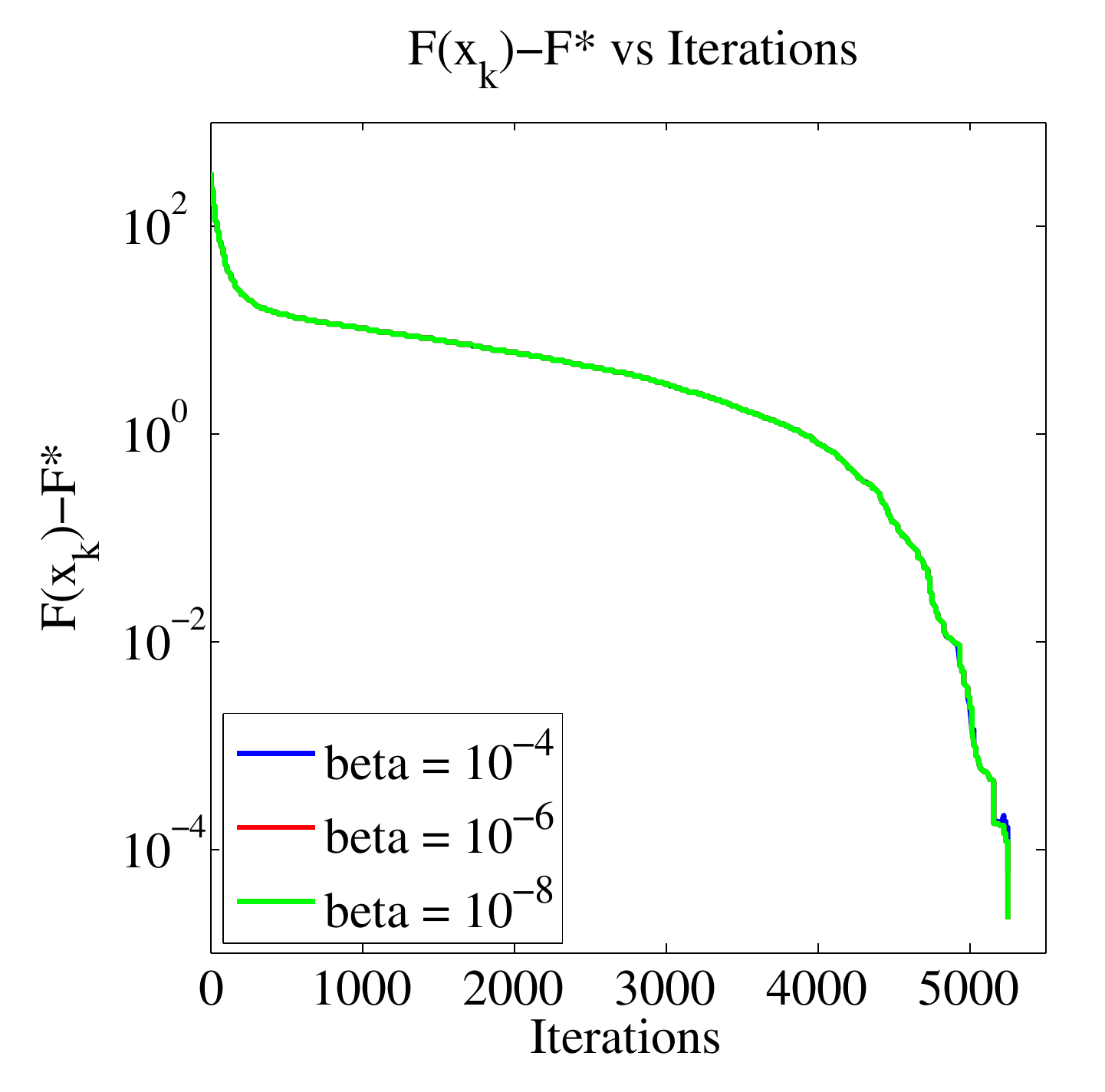}
\includegraphics[width=7.5cm]{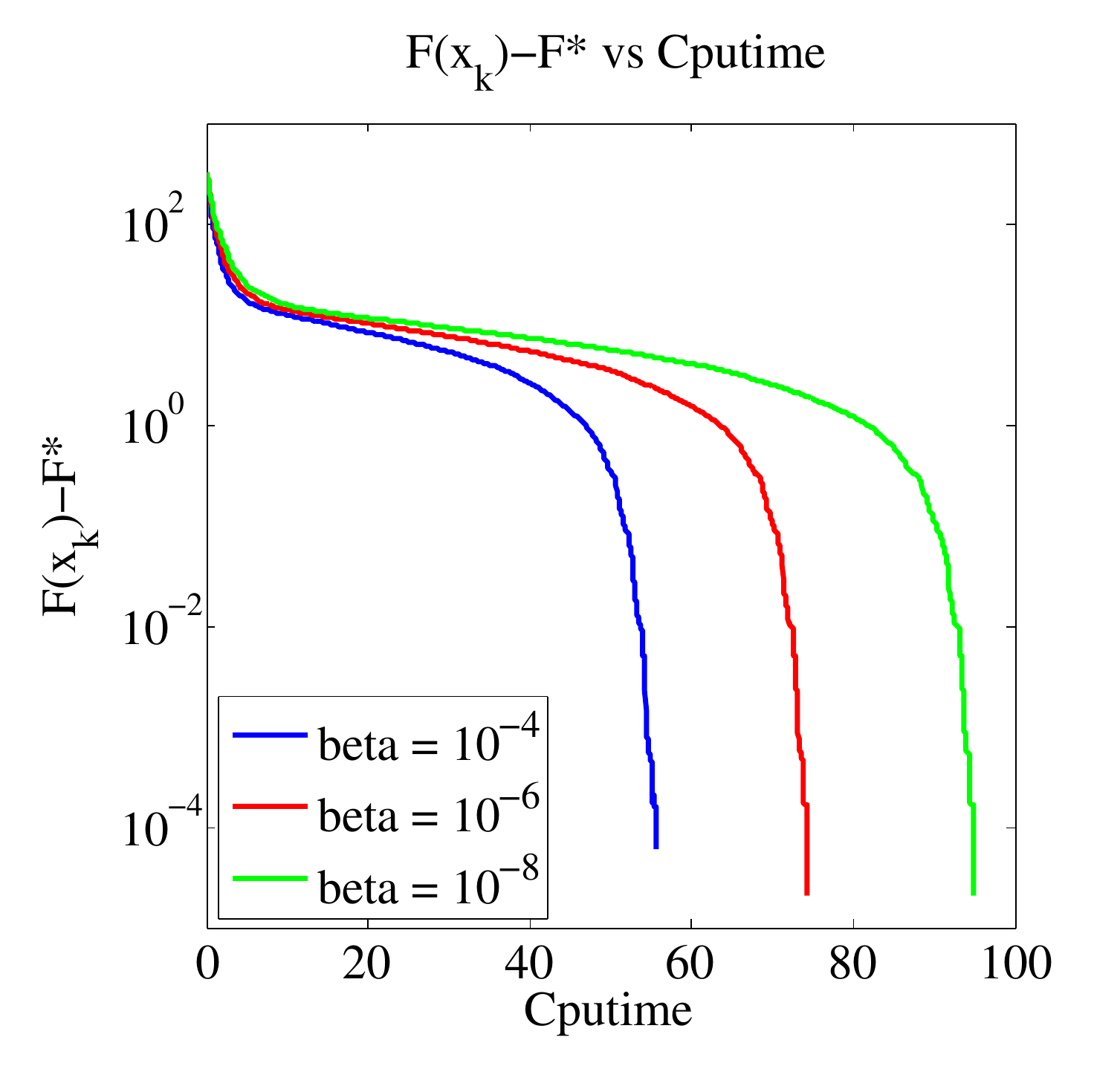}\\
\caption{Plots of the objective function value vs the number of iterations, and the cputime vs the objective function values on the $l_1$-regularised quadratic loss problem \eqref{Problem_CS}. For these plots, the matrix $A$ is $0.5N \times N$, where $N = 10^5$. In the plots in the first row, $n = 10$ and $N_i = 10^4$ for all $i=1,\dots,n$. In the second row, $n = 100$ and $N_i = 10^3$ for all $i=1,\dots,n$.}
\label{Figure_05N}
\end{figure}

\begin{figure}[h!]\centering
  \includegraphics[width=7cm]{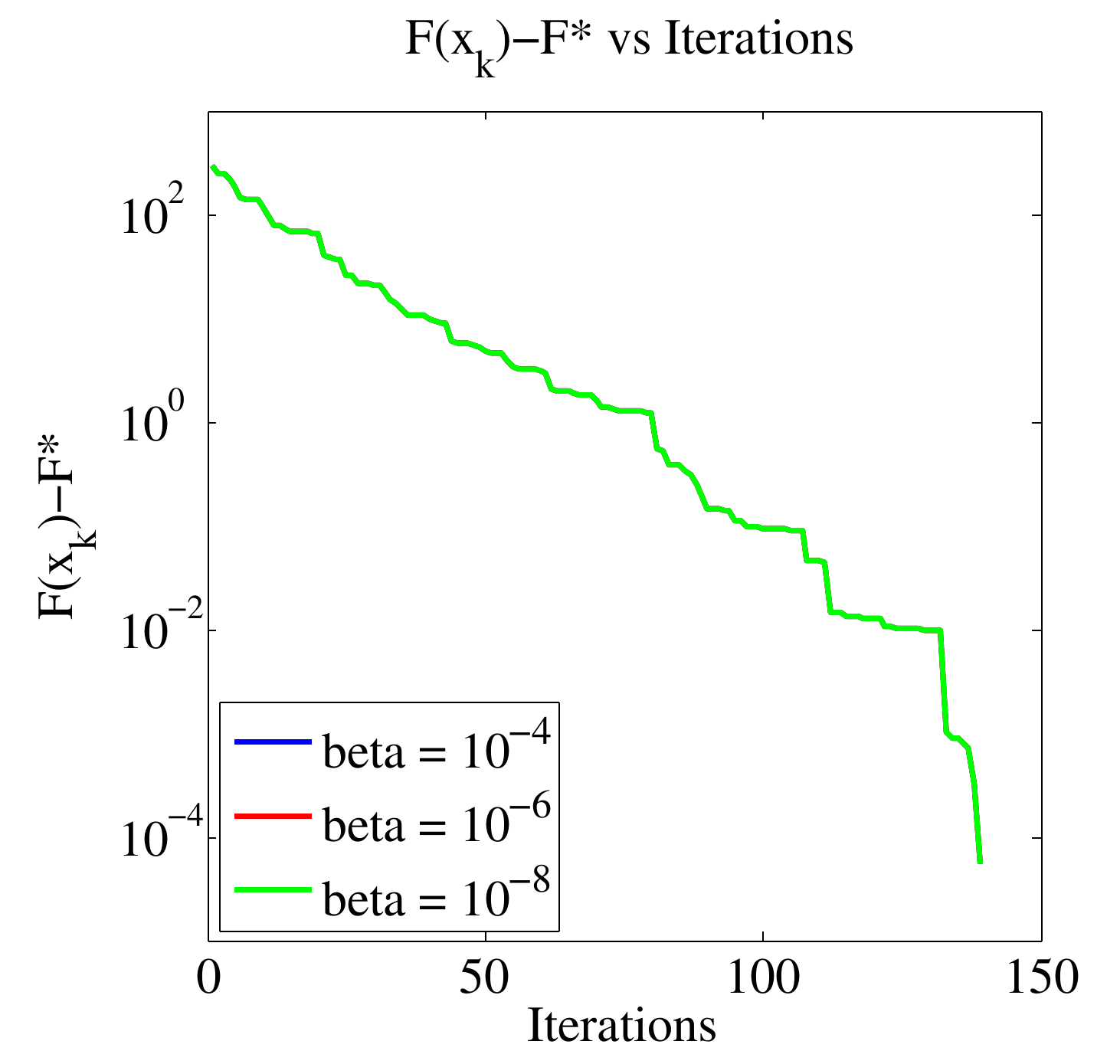}
\includegraphics[width=7cm]{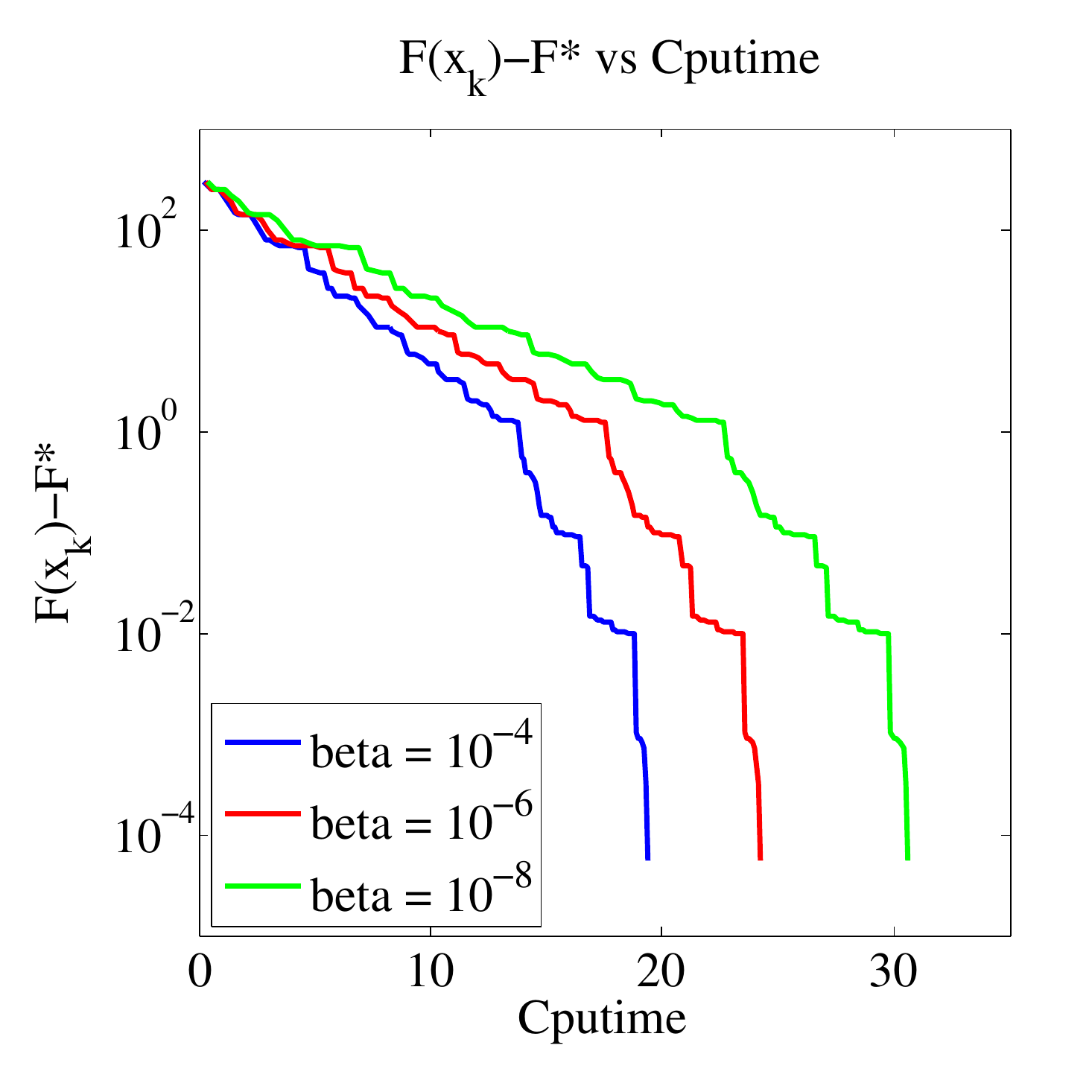}\\
\includegraphics[width=7cm]{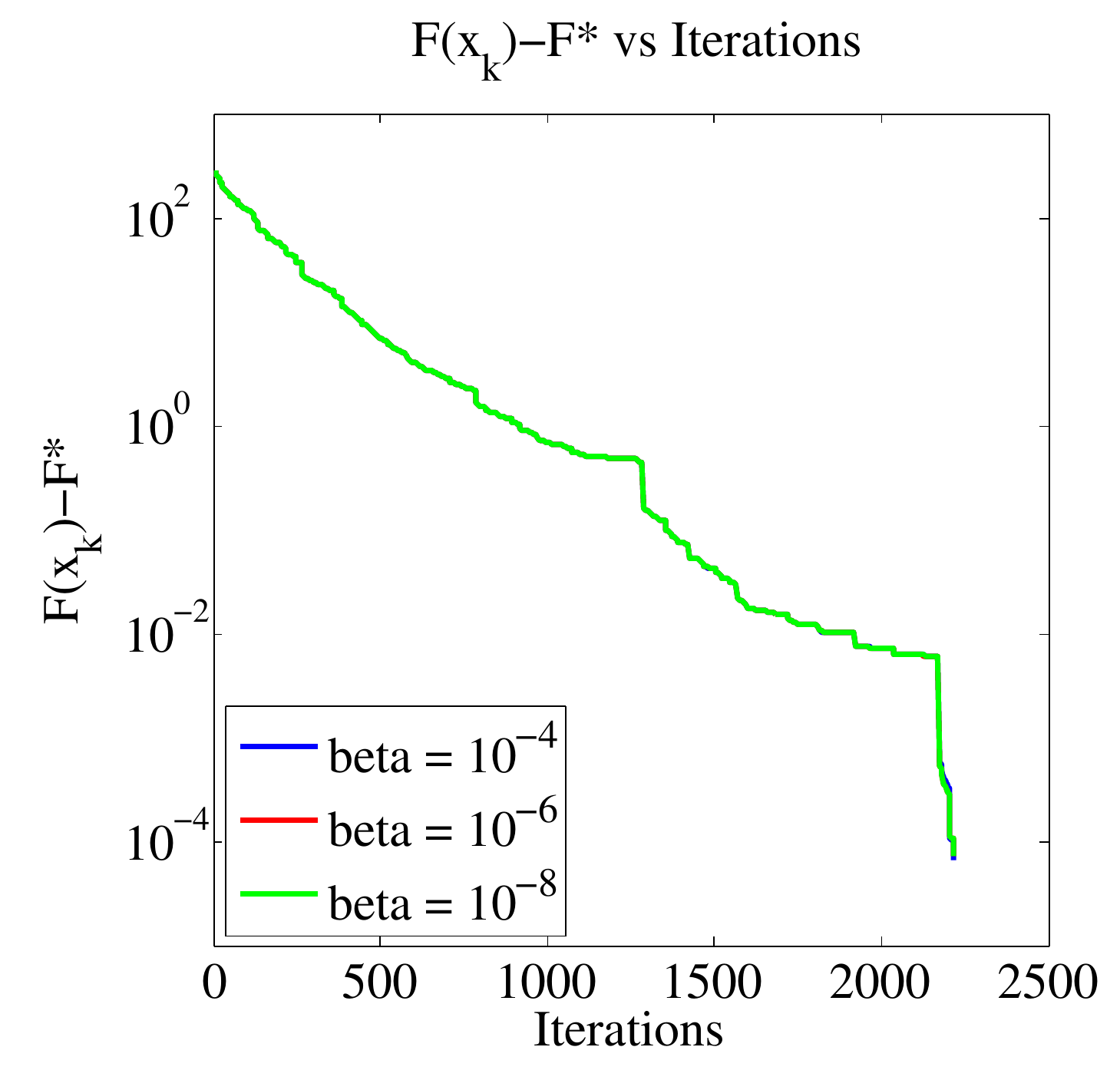}
\includegraphics[width=7cm]{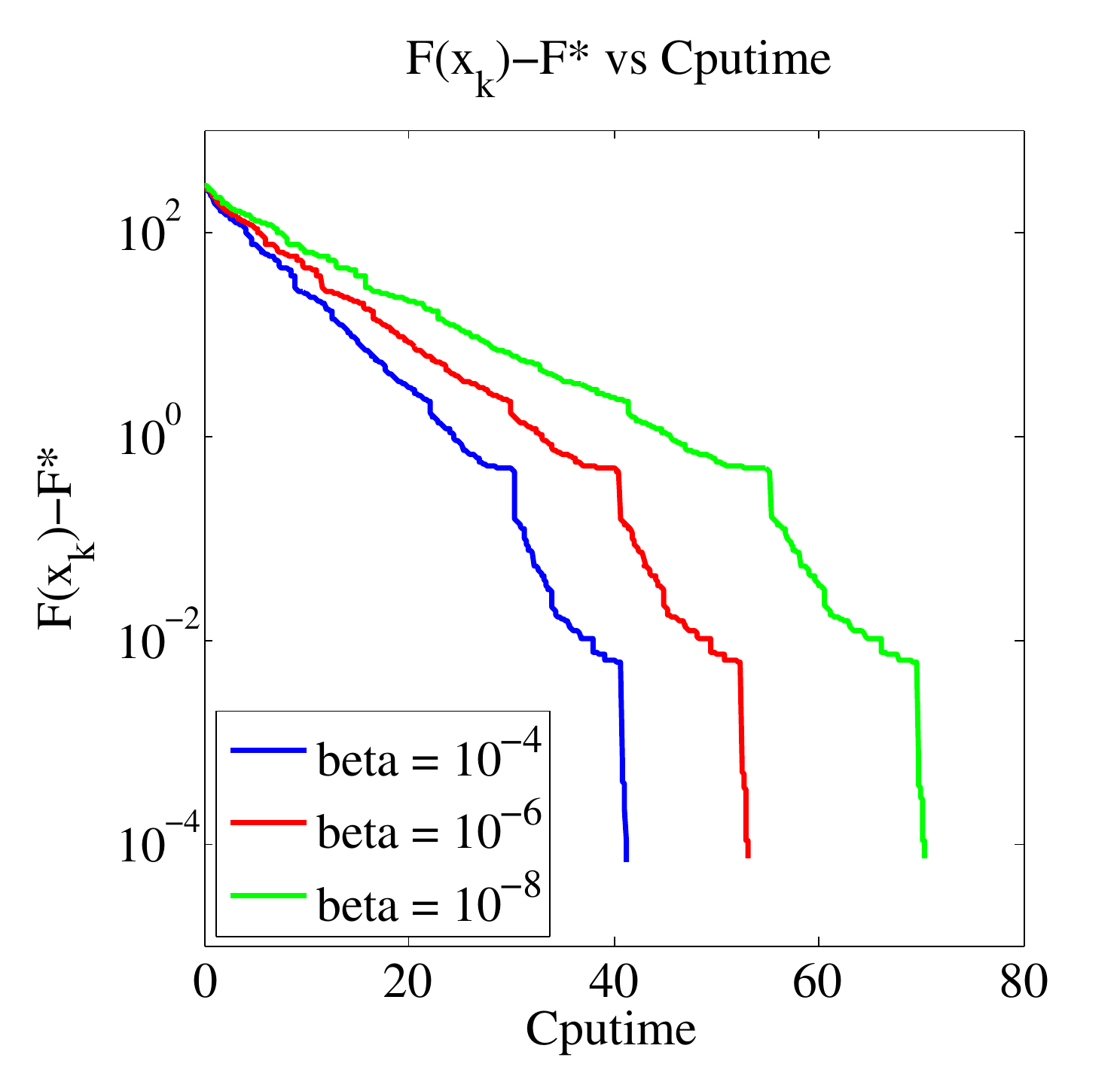}\\
\caption{Plots of the objective function value vs the number of iterations, and the cputime vs the objective function values on the $l_1$-regularised quadratic loss problem \eqref{Problem_CS}. For these plots, the matrix $A$ is $2N \times N$, where $N = 10^5$. In the plots in the first row, $n = 10$ and $N_i = 10^4$ for all $i=1,\dots,n$. In the second row, $n = 100$ and $N_i = 10^3$ for all $i=1,\dots,n$.}
\label{Figure_2N}
\end{figure}

Figures \ref{Figure_05N} and \ref{Figure_2N} show the results of experiments 1 ($M < N$) and 2 ($M > N$) respectively. The experiments were performed many times on simulated data and the plots shown are a particular instance that is \emph{representative of the typical behaviour} observed using ICD on this problem description. We see that when the same block ordering is used, all algorithms essentially require the same number of iterations until termination regardless of the parameter $\beta$. (This is to be expected.) Moreover, it is clear that using a smaller value of $\beta$, corresponding to more `inexactness' in the computed update, leads to a reduction in the algorithm running time, without affecting the ultimate convergence of ICD. This shows that using an inexact update (an iterative method) has significant practical advantages.

%\subsection{Solving LPs using ICD}
%
%In this section we are interested in the problem
%\begin{equation*}
%  \min_x \;c^Tx  \qquad \text{s.t. } Ax=b, x\geq 0,
%\end{equation*}
%where $A$ has block diagonal structure.
%
%Notice that problem? does not fit the original problem description; indeed, this is a constrained problem. However, we can reformulate this problem into an equivalent one, without constraints (or with simple constraints using the indicator function.) That is, we form the Lagrangian and write
%\begin{equation*}
%  \min_{x}\; c^Tx  + I_{-}(x)
%\end{equation*}
%where $I_-(x)$ is the indicator function defined as
%\begin{equation*}
%  I_-(x) = \begin{cases}
%    0 & \text{if } x \in X\\
%    \infty & \text{otherwise},
%  \end{cases}
%\end{equation*}
%where $X = \{x | Ax=b, x \geq 0\}$. Now we have $f(x) = c^Tx$ and $\Psi(x) = I_-(x)$. (Notice that $\Psi$ is block separable because $A$ is block diagonal.)
%
%In this case we need to see what the update step is. It is simply
%\begin{eqnarray*}
%  t_k = \arg \min_t \{\langle c, t\rangle + \frac{1}{2}\|t\|_2^2 + I_-(x\ii+t)\}.
%\end{eqnarray*}
%One way of solving for the update is as follows. Suppose that we choose $x_0 \in X$. Then solving
%\begin{eqnarray*}
%  t_k = \arg \min_t \{\langle c, t\rangle + \frac{1}{2}\|t\|_2^2  \;|\; At = 0, x+t \geq 0\},
%\end{eqnarray*}
%ensures that $x_k \in X$ for $k = 0,1,2,\dots$. We use Matlab's `quadprog' function and set the options to interior-point-convex. This ensures we have control over the inexactness in the update step (because this algorithm terminates on the duality gap).

\bibliographystyle{plain}
\bibliography{My_references}

\appendix
\section{Eigenvalues of the preconditioned matrix}
\label{Section_Eigenvalues}

The purpose of this section is to provide a full theoretical justification for the choice of preconditioner presented in Section \ref{Section_Numerical_problemdescription}.

The convergence speed of many iterative methods, such as CG, depends on the spectrum of the system matrix. The purpose of a preconditioner is to shift the spectrum of the preconditioned matrix so that the eigenvalues of the resulting system are clustered around one, with few outliers. In this section we study the eigenvalues of the preconditioned matrix under the problem setup described in Sections \ref{Section_quadratic} and \ref{Section_Numerical_problemdescription}.

 Applying \eqref{preconditioner} and \eqref{preconditioner_perturbed} to $B_i(= C_i^TC_i + D_i^TD_i)$ gives
\begin{eqnarray}
\label{Preconditioned}
     \p^{-1}B_i = I + \p^{-1} \Di^T\Di,\text{ and } \hp^{-1}B_i = \hp^{-1}\p + \hp^{-1} \Di^T\Di
\end{eqnarray}
%and
%\begin{eqnarray}
%\label{Preconditioned_hat}
%     \hp^{-1}B_i = (\p^{-1} +\rho I)(\p + \Di^T\Di) = \hp^{-1}\p + \hp^{-1} \Di^T\Di,
%\end{eqnarray}

To investigate the quality of a preconditioner, we study the eigenvalues of the preconditioned matrices $\p^{-1}B_i$ and $\hp^{-1}B_i$ defined in \eqref{Preconditioned}.

The nonzero eigenvalues of the $N_i \times N_i$ matrix $\p^{-1}\Di^T\Di$ are the same as the nonzero eigenvalues of $\Di\p^{-1}\Di^T$ (See Theorem 2.8 in \cite{Zhang99}). We prefer to work with $\Di\p^{-1}\Di^T$ because it is symmetric and positive semidefinite, so it has real, nonnegative eigenvalues. (Furthermore, if $\Di$ has full (row) rank, then  $\Di\p^{-1}\Di^T$ is positive definite.)

We can say more about the eigenvalues of the preconditioned matrix by considering the blocks of $A$ and investigating the relationship between the matrices $C$ and $D$, defined in \eqref{C}. Recall that $C$ contains blocks $C_{i} \in \R^{M_i \times N_i}$. The remainder of this section is broken into two parts. The first part considers the case when $M_i \geq N_i$ while the second part considers the case when $M_i < N_i$. In each case $C_i$ is assumed to have full rank.\footnote{Note that the eigenvalues of $\p^{-1}\Di^T\Di$ can be determined exactly by solving the generalized eigenvalue problem $\Di^T\Di v = \lambda \p v$.}

\subsection{Tall blocks}

In this section it is assumed that $C_{i} \in \R^{M_i \times N_i}$ where $M_i \geq N_i$ and that $\Di \in \mathbb{R}^{\ell \times N_i}$ with $1 \leq \ell < N_i$. Furthermore, we assume that $C_i$ has full column rank (the rows of $\Ci$ contain a basis for $\mathbb{R}^{N_i}$) so each row of $\Di$ is a linear combination of the rows of $\Ci$. i.e., for $Z_i \in\R^{\ell \times M_i}$ we can write
\begin{eqnarray}
\label{Dmatrix2}
     \Di = Z_i C_i.
\end{eqnarray}
We have the following result.
\begin{theorem}
\label{eigsM}
Let $r_i = {\rm{rank}}(\Di)$ and $r_i \leq N_i$. Then $\p^{-1}B_i = I+\p^{-1}\Di^T\Di$ \eqref{Preconditioned} has
\begin{itemize}
     \item[(i)] $r_i$ eigenvalues that are strictly greater than one.
     \item[(ii)] $N_i-r_i$ eigenvalues equal to one.
\end{itemize}
\end{theorem}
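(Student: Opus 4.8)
The plan is to analyze the spectrum of $\p^{-1}B_i = I + \p^{-1}\Di^T\Di$ by understanding the eigenvalues of the correction term $\p^{-1}\Di^T\Di$. First I would observe that the eigenvalues of $I + \p^{-1}\Di^T\Di$ are exactly $1 + \mu$, where $\mu$ ranges over the eigenvalues of $\p^{-1}\Di^T\Di$; so it suffices to show that $\p^{-1}\Di^T\Di$ has exactly $r_i$ strictly positive eigenvalues and $N_i - r_i$ zero eigenvalues. The matrix $\p^{-1}\Di^T\Di$ is similar to $\p^{-1/2}\Di^T\Di\p^{-1/2}$ (conjugating by $\p^{1/2}$, which exists and is invertible since $\p = \Ci^T\Ci \succ 0$ by the full-column-rank assumption on $C_i$), and this latter matrix is symmetric positive semidefinite, hence has real nonnegative eigenvalues with the number of strictly positive ones equal to its rank.

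The key step is then the rank computation. Since $\p^{-1/2}\Di^T\Di\p^{-1/2} = (\Di\p^{-1/2})^T(\Di\p^{-1/2})$, its rank equals $\mathrm{rank}(\Di\p^{-1/2})$, and because $\p^{-1/2}$ is invertible this equals $\mathrm{rank}(\Di) = r_i$. Therefore $\p^{-1/2}\Di^T\Di\p^{-1/2}$ has $r_i$ strictly positive eigenvalues and $N_i - r_i$ zero eigenvalues; translating back through the similarity transformation and the shift by $I$, the matrix $\p^{-1}B_i$ has $r_i$ eigenvalues strictly greater than one and $N_i - r_i$ eigenvalues equal to one, which is exactly (i) and (ii). The decomposition $\Di = Z_i C_i$ from \eqref{Dmatrix2} is what guarantees we are genuinely in the regime where $r_i \le N_i$ is the relevant count (and it makes explicit that $\Di\p^{-1}\Di^T = Z_i C_i (\Ci^T\Ci)^{-1} C_i^T Z_i^T$ has the same nonzero spectrum), though for the rank argument itself only invertibility of $\p$ is needed.

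I do not expect a serious obstacle here; the only mild subtlety is bookkeeping the passage between the non-symmetric $\p^{-1}\Di^T\Di$ and a symmetric representative so that ``eigenvalue'' statements are unambiguous, and confirming that the strictly-positive eigenvalues are \emph{strictly} greater than one rather than merely $\ge 1$ — which follows since a symmetric PSD matrix of rank $r_i$ has precisely $r_i$ eigenvalues that are $> 0$. An alternative, fully equivalent route avoiding square roots is to note the nonzero eigenvalues of $\p^{-1}\Di^T\Di$ coincide with those of $\Di\p^{-1}\Di^T$ (as already cited via Theorem 2.8 of \cite{Zhang99}), which is symmetric PSD of rank $r_i$, giving the same count directly.
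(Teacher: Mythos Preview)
Your proposal is correct and aligns with the paper's treatment: the paper does not give a formal proof of this theorem but instead, in the paragraph immediately preceding it, notes that the nonzero eigenvalues of $\p^{-1}\Di^T\Di$ coincide with those of the symmetric positive semidefinite matrix $\Di\p^{-1}\Di^T$ (citing Theorem~2.8 of \cite{Zhang99}), from which the eigenvalue counts follow directly. Your ``alternative route'' is precisely this observation, and your primary route via similarity with $\p^{-1/2}\Di^T\Di\,\p^{-1/2}$ is an equivalent and slightly more self-contained way to reach the same conclusion.
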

We have the following bound on the diagonal elements of $\p^{-1} \Di^T\Di$.

\begin{lemma}
     Let $\p \in \R^{N_i \times N_i}$ and $\Di \in \R^{\ell \times N_i}$ be the matrices defined in \eqref{preconditioner} and \eqref{Dmatrix2} respectively and let $z_j^T$ denote the $j$th row of $Z_i$. Let $C_i = Y_i R_i$ denote the thin QR factorization (\cite{Golub96}) of $C_i$, so $Y_i \in \R^{M_i \times N_i}$ has orthonormal columns and $R_i \in \R^{N_i \times N_i}$ is upper triangular. Then
     \begin{eqnarray}
\label{Skyscraper_result}
	  {\rm{trace}}(\Di\p^{-1} \Di^T) \;=\; \sum_{j=1}^\ell \|z_j^TY_i\|_2^2 \;\leq\; \|Z_i\|_F^2.
     \end{eqnarray}
\end{lemma}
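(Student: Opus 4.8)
The plan is to reduce everything to the thin QR factorization $C_i = Y_i R_i$ and exploit the orthonormality of the columns of $Y_i$. First I would substitute $D_i = Z_i C_i = Z_i Y_i R_i$ into the preconditioner. Since $Y_i^T Y_i = I_{N_i}$, we get $\p = C_i^T C_i = R_i^T Y_i^T Y_i R_i = R_i^T R_i$, and (because $C_i$ has full column rank, $R_i$ is invertible) $\p^{-1} = R_i^{-1} R_i^{-T}$.

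Next I would plug these into $D_i \p^{-1} D_i^T$ and watch the $R_i$ factors cancel:
\begin{equation*}
D_i \p^{-1} D_i^T = (Z_i Y_i R_i)(R_i^{-1} R_i^{-T})(R_i^T Y_i^T Z_i^T) = (Z_i Y_i)(Z_i Y_i)^T.
\end{equation*}
Taking the trace and using $\mathrm{trace}(MM^T) = \|M\|_F^2$ with $M = Z_i Y_i$, and observing that the $j$th row of $Z_i Y_i$ is $z_j^T Y_i$, gives the claimed identity $\mathrm{trace}(D_i \p^{-1} D_i^T) = \|Z_i Y_i\|_F^2 = \sum_{j=1}^\ell \|z_j^T Y_i\|_2^2$.

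For the inequality I would note that $Y_i Y_i^T$ is an orthogonal projection matrix, hence $Y_i Y_i^T \preceq I_{M_i}$, so that $\|z_j^T Y_i\|_2^2 = z_j^T Y_i Y_i^T z_j \leq z_j^T z_j = \|z_j\|_2^2$ for each $j$; equivalently, $\|Y_i\|_2 = 1$ since $Y_i$ has orthonormal columns. Summing over $j = 1,\dots,\ell$ yields $\sum_{j=1}^\ell \|z_j^T Y_i\|_2^2 \leq \sum_{j=1}^\ell \|z_j\|_2^2 = \|Z_i\|_F^2$, which completes the proof.

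I do not anticipate a real obstacle here: the argument is essentially bookkeeping with the QR factorization. The only point requiring a moment of care is the cancellation of the $R_i$ factors, which relies on $C_i$ having full column rank (stated as a standing assumption in this subsection) so that $R_i$ is genuinely invertible; and the standard fact that a matrix with orthonormal columns has spectral norm $1$, which gives the final bound.
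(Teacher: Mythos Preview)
Your argument is correct and essentially identical to the paper's own proof: both substitute the thin QR factorization to reduce $D_i\p^{-1}D_i^T$ to $(Z_iY_i)(Z_iY_i)^T$, read off the trace as $\sum_j \|z_j^T Y_i\|_2^2$, and then bound each term using the fact that $Y_iY_i^T$ is an orthogonal projection. The only cosmetic difference is that you first isolate $\p^{-1}=R_i^{-1}R_i^{-T}$ before cancelling, whereas the paper inverts $R_i^T Y_i^T Y_i R_i$ inside the expression.
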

\begin{proof}
   The trace is simply the sum of the diagonal entries of a (square) matrix, so:
\begin{eqnarray*}
     D_i\p^{-1}D_i = Z_iC_i(C_i^TC_i)^{-1}C_i^TZ_i^T = Z_iY_i R_i(R_i^TY_i^TY_i R_i)^{-1}R_i^TY_i^TZ_i^T = (Z_iY_i)(Z_iY_i)^T.
\end{eqnarray*}
Now $(\Di\p^{-1} \Di^T)_{jj} = \|Y_i^Tz_j\|_2^2$ and so $\|Z_i\|_F^2 = \sum_{j=1}^{\ell}\|z_j\|_2^2$. Because $Y_iY_i^T$ is a projection matrix, $\|Y_i^Tz_j\|_2^2 = \|Y_iY_i^Tz_j\|_2^2 \leq \|z_j\|_2^2$, and the result follows.
\end{proof}

\emph{Remark:} When $C_i$ is square and has full rank, $Y_i$ is an orthogonal matrix, and subsequently ${\rm{trace}}(\Di\p^{-1} \Di^T) = \sum_{j=1}^\ell \|z_j\|_2^2 = \|Z_i\|_F^2$.

We now present the main result of this section.
\begin{theorem}
     Suppose that $A \in \R^{M \times N}$ has primal block angular structure, with rectangular blocks $\Ci \in \R^{M_i \times N_i}$ ($M_i \geq N_i$) of full rank ($N_i = \text{rank }(\Ci)$) along the diagonal. Suppose that $B_i\equiv A_i^TA_i$, $\Di$ and $\p$ are defined in \eqref{C}, \eqref{Dmatrix2} and \eqref{preconditioner} respectively, and let $r_i = {\rm{rank}}(\Di)$. Then $\p^{-1}B_i$ has
\begin{itemize}
     \item[(i)] $N_i - r_i$ eigenvalues equal to one,
     \item[(ii)] $r_i$ eigenvalues that are strictly greater than 1, and sum to $r_i + \sum_{j=1}^\ell \|Y_i^Tz_j\|_2^2$.
\end{itemize}
\end{theorem}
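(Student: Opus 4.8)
The plan is to obtain the eigenvalue multiplicities directly from Theorem~\ref{eigsM} and to pin down the sum of the ``large'' eigenvalues by a trace computation that invokes the Lemma immediately preceding the statement (i.e.\ \eqref{Skyscraper_result}).

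First I would recall from \eqref{Preconditioned} that $\p^{-1}B_i = I + \p^{-1}\Di^T\Di$. Under the stated hypotheses --- $\Ci$ has full column rank, so $\p = \Ci^T\Ci$ is positive definite and $\Di = Z_iC_i$ --- Theorem~\ref{eigsM} applies verbatim. Part~(i) of the present statement is then exactly Theorem~\ref{eigsM}(ii): there are precisely $N_i - r_i$ eigenvalues equal to one. Likewise, the count in part~(ii) is Theorem~\ref{eigsM}(i): the remaining $r_i$ eigenvalues are each strictly greater than one. So the only new content is the value of their sum.

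To compute that sum, note that $\p^{-1}B_i$ is similar to the symmetric matrix $\p^{-1/2}B_i\p^{-1/2}$ (conjugate by $\p^{1/2}$), hence has a real spectrum whose total sum equals $\mathrm{trace}(\p^{-1}B_i)$. Using linearity of the trace together with the cyclic identity $\mathrm{trace}(XY)=\mathrm{trace}(YX)$ with $X=\p^{-1}\Di^T$, $Y=\Di$,
\[
{\rm trace}(\p^{-1}B_i) = N_i + {\rm trace}(\p^{-1}\Di^T\Di) = N_i + {\rm trace}(\Di\p^{-1}\Di^T) = N_i + \sum_{j=1}^{\ell}\|Y_i^Tz_j\|_2^2,
\]
where the last step is \eqref{Skyscraper_result} from the Lemma (with $C_i = Y_iR_i$ the thin QR factorization, and using $\|z_j^TY_i\|_2 = \|Y_i^Tz_j\|_2$). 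Of this total, the $N_i-r_i$ unit eigenvalues contribute $N_i - r_i$, so the $r_i$ eigenvalues that are strictly greater than one must sum to
\[
\Big(N_i + \sum_{j=1}^{\ell}\|Y_i^Tz_j\|_2^2\Big) - (N_i - r_i) = r_i + \sum_{j=1}^{\ell}\|Y_i^Tz_j\|_2^2,
\]
which is precisely part~(ii).

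There is no genuine obstacle here: the theorem is essentially a repackaging of Theorem~\ref{eigsM} (the multiplicities) together with the trace identity \eqref{Skyscraper_result} (the sum). The only points requiring a line of justification are (a) that $\p^{-1}B_i$ has real eigenvalues, so that ``sum of eigenvalues $=$ trace'' is legitimate --- handled by the similarity to $\p^{-1/2}B_i\p^{-1/2}$ --- and (b) the swap ${\rm trace}(\p^{-1}\Di^T\Di) = {\rm trace}(\Di\p^{-1}\Di^T)$, which is just the cyclic property of the trace.
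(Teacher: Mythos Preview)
Your proposal is correct and follows exactly the approach the paper takes: the paper's proof is the single sentence ``The proof follows from Theorem~\ref{eigsM} and Lemma~\ref{Skyscraper_result},'' and you have simply filled in the details of that combination (multiplicities from Theorem~\ref{eigsM}, sum via the trace identity \eqref{Skyscraper_result}, subtracting off the $N_i-r_i$ unit eigenvalues). The extra justifications you supply --- realness of the spectrum via similarity to $\p^{-1/2}B_i\p^{-1/2}$ and the cyclic trace swap --- are appropriate and not in conflict with anything in the paper.
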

\begin{proof}
  The proof follows from Theorem \ref{eigsM} and Lemma \ref{Skyscraper_result}.
\end{proof}
\subsection{Wide blocks}

In this section we assume that $C_{i} \in \R^{M_i \times N_i}$ where $M_i < N_i$ with full row rank $M_i = {\text{rank}}(C_i)$, and that $\Di \in \R^{\ell \times N_i}$ where $\ell \geq N_i - M_i$. Then the rows of $\Ci$ form a basis for a subspace $\mathcal{W} \eqdef {\rm{span}}\{c^{(i)}_1,\dots,c^{(i)}_{M_i}\} \subset \R^{N_i}$, where $(c^{(i)}_{j})^T$ is the $j$th row of $\Ci$.

Furthermore, let $W$ be an $\ell \times N_i$ matrix whose rows $w_j^T \in \mathcal{W}$, for $j = 1,\dots,\ell$, and let $W^{\perp}$ be an $\ell \times N_i$ matrix whose rows $(w_j^{\perp})^T \in \mathcal{W}^{\perp}$, for $j = 1,\dots,\ell$, where $\mathcal{W}^{\perp}$ denotes the orthogonal complement of $\mathcal{W}$. Then one can write
\begin{equation}
\label{Warehouse_Di}
     D_i = W + W^{\perp}.
\end{equation}

For ICD, $B_i = \Ai^T\Ai$ must have full rank because it defines a norm (see Section \ref{S_Block_structure}). However, when $C_i$ is wide, $\p$ defined in \eqref{preconditioner} is rank deficient so we use the preconditioner $\hp$ defined in \eqref{preconditioner_perturbed}. The preconditioned matrix is defined in \eqref{Preconditioned}. We study the eigenvalues of $\hp^{-1} \p$ and $\hp^{-1} \Di^T\Di$ separately, before stating the main result of this section, which describes the eigenvalues of $\hp^{-1}B_i$.

\begin{theorem}
     \label{thm_eigs_PP}
Let $C_i$ be a real $M_i \times N_i$ matrix with $M_i < N_i$ and full row rank $M_i = {\rm{rank}}(C_i)$. Let $\p$ and $\hp$ be defined in \eqref{preconditioner} and \eqref{preconditioner_perturbed} respectively, and let $M_i = {\rm{rank }}(\p)$. Then $\hp^{-1} \p$ has $N_i-M_i$ zero eigenvalues and $M_i$ positive eigenvalues that tend to 1 as $\rho \to 0$.
\end{theorem}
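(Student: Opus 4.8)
The plan is to diagonalize $\p$ and exploit the fact that $\hp$ differs from it only by a multiple of the identity, so the two matrices share an eigenbasis. Since $\p = \Ci^T\Ci$ is symmetric positive semidefinite, I would write $\p = V\Lambda V^T$ with $V$ orthogonal and $\Lambda = \mathrm{diag}(\lambda_1,\dots,\lambda_{M_i},0,\dots,0)$, where $\lambda_1 \geq \dots \geq \lambda_{M_i} > 0$ are the squares of the nonzero singular values of $\Ci$. The full-row-rank hypothesis $M_i = \mathrm{rank}(\Ci)$ is precisely what guarantees that exactly $M_i$ of these eigenvalues are positive and the remaining $N_i - M_i$ are zero, so the multiplicities in the statement are fixed from the outset.

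Next I would observe that $\hp = \p + \rho I = V(\Lambda + \rho I)V^T$, so $\hp$ has the same eigenvectors as $\p$; since $\rho > 0$ the diagonal matrix $\Lambda + \rho I$ has all entries positive, hence $\hp$ is invertible with $\hp^{-1} = V(\Lambda+\rho I)^{-1}V^T$. Therefore $\hp^{-1}\p = V(\Lambda + \rho I)^{-1}\Lambda V^T$, which is similar (via $V$) to the diagonal matrix $(\Lambda+\rho I)^{-1}\Lambda$ whose entries are $\lambda_j/(\lambda_j+\rho)$ for $j = 1,\dots,M_i$ and $0$ in the remaining $N_i - M_i$ positions. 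Reading off this spectrum gives the claim: $\hp^{-1}\p$ has $N_i - M_i$ zero eigenvalues and $M_i$ positive eigenvalues, each of the form $\lambda_j/(\lambda_j+\rho) \in (0,1)$. Letting $\rho \to 0$, each such eigenvalue converges to $1$, which is the asserted limiting behaviour.

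I do not anticipate a genuine obstacle here; the argument is a one-line simultaneous-diagonalization observation once the eigendecomposition of $\p$ is in place. The only points deserving explicit mention are that $\hp$ is truly invertible (so that $\hp^{-1}\p$ is well defined), which follows from $\p \succeq 0$ and $\rho > 0$, and that the rank hypothesis on $\Ci$ is what controls the eigenvalue multiplicities. If one prefers to avoid the eigendecomposition language, the same conclusion follows from the thin SVD $\Ci = Y\Sigma Z^T$ together with the observation that $\p$ and $\hp$ act trivially on $\ker \Ci$, but the diagonalization route is the cleanest.
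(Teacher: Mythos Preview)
Your proposal is correct and follows essentially the same route as the paper: both diagonalize $\p = V\Lambda V^T$, observe that $\hp = V(\Lambda+\rho I)V^T$ shares the same eigenbasis, and read off the eigenvalues $\lambda_j/(\lambda_j+\rho)$ of $\hp^{-1}\p$ directly from the diagonal. The only cosmetic difference is that the paper first notes $\mathrm{rank}(\hp^{-1}\p)=\mathrm{rank}(\p)=M_i$ and positivity via $\hp^{-1/2}\p\hp^{-1/2}\succeq 0$ before writing the decomposition, whereas you derive both facts from the explicit diagonal form; the substance is identical.
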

\begin{proof}
The preconditioner $\hp$ satisfies ${\rm{rank}}(\hp^{-1}\p) = {\rm{rank}}(\p) = M_i$, so $\hp^{-1} \p$ has $M_i$ nonzero eigenvalues and $N_i-M_i$ zero eigenvalues. Furthermore, the $M_i$ nonzero eigenvalues are positive. (Notice that $\hp^{-\frac{1}{2}}\p \hp^{-\frac{1}{2}}$ is positive semidefinite.)

Let  $\lambda_1, \dots, \lambda_{M_i}$ denote the $M_i$ nonzero eigenvalues of $\p$. The eigenvalue decomposition of $\p$ is $\p = V \Lambda V^T$, where $\Lambda = \text{diag}(\lambda_1, \dots, \lambda_{M_i},0,\dots,0)$. Moreover, the eigenvalue decomposition for $\hp$ is $\hp = \p + \rho I = V \hat{\Lambda} V^T$ where $\hat{\Lambda} = \Lambda + \rho I$.

Finally, $\hp^{-1}\p = V \hat{\Lambda}^{-1}V^TV \Lambda V^T = V \hat{\Lambda}^{-1} \Lambda V^T$, where $\hat{\Lambda}^{-1} \Lambda$ is a diagonal matrix with diagonal entries $(\lambda_1/(\lambda_1+\rho),\dots,\lambda_{M_i}/(\lambda_{M_i}+\rho),0,\dots,0)$ and as $\rho \to 0$, $\frac{\lambda_j}{\lambda_j+ \rho} \to 1$ for $j = 1,\dots,M_i$.
\end{proof}

\begin{theorem}
\label{Thm_warehouse_D}
Let $D_i$ and $\hp$ be as defined in \eqref{Warehouse_Di} and \eqref{preconditioner_perturbed} respectively, and let $\rho>0$. Then
  \begin{equation}
    {\rm{tr}}(D_i\hp^{-1}D_i^T) = \sum_{j=1}^\ell \; \Big(\|\hat{\Lambda}_1^{-\frac{1}{2}} V_1^T(w_j + w_j^{\perp})\|_2^2 + \frac{1}{\rho}\|V_2^T(w_j^{\perp})\|_2^2.
  \end{equation}
\end{theorem}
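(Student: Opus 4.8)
The plan is to expand the trace one row of $\Di$ at a time and then insert the spectral decomposition of $\hp$ already obtained in the proof of Theorem~\ref{thm_eigs_PP}. Writing $d_j = w_j + w_j^\perp$ for the $j$th row of $\Di$ (using \eqref{Warehouse_Di}), the first step is the elementary identity $\mathrm{tr}(\Di\hp^{-1}\Di^T) = \sum_{j=1}^\ell d_j^T\hp^{-1}d_j$, so it suffices to evaluate each quadratic form $d_j^T\hp^{-1}d_j$.

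Next I would recall from Theorem~\ref{thm_eigs_PP} that, with $V=[V_1\ V_2]$ orthogonal, $\p = C_i^TC_i = V\,\text{diag}(\Lambda_1,0)\,V^T$ where $\Lambda_1 = \text{diag}(\lambda_1,\dots,\lambda_{M_i})\succ 0$ and $V_1\in\R^{N_i\times M_i}$, $V_2\in\R^{N_i\times(N_i-M_i)}$; correspondingly $\hp = \p+\rho I = V\,\text{diag}(\hat\Lambda_1,\rho I)\,V^T$ with $\hat\Lambda_1 = \Lambda_1 + \rho I$. Hence
\[
\hp^{-1} \;=\; V_1\hat\Lambda_1^{-1}V_1^T + \tfrac{1}{\rho}V_2V_2^T ,
\]
and therefore, for each $j$,
\[
d_j^T\hp^{-1}d_j \;=\; \|\hat\Lambda_1^{-\frac12}V_1^T d_j\|_2^2 \;+\; \tfrac1\rho\|V_2^T d_j\|_2^2 .
\]
The one point that needs care is the geometric identification of the two eigenspaces: $\mathrm{col}(V_1) = \mathrm{range}(C_i^TC_i) = \mathrm{range}(C_i^T) = \mathcal{W}$ and $\mathrm{col}(V_2) = \ker(C_i^TC_i) = \ker(C_i) = \mathcal{W}^\perp$ (here $C_i$ has full row rank). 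Since $w_j\in\mathcal{W}$ is orthogonal to every column of $V_2$, we get $V_2^Tw_j = 0$, so $V_2^T d_j = V_2^T w_j^\perp$. Substituting $d_j = w_j + w_j^\perp$ in the first term and $V_2^T d_j = V_2^T w_j^\perp$ in the second, then summing over $j = 1,\dots,\ell$, gives exactly the stated expression for $\mathrm{tr}(\Di\hp^{-1}\Di^T)$.

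I do not anticipate a genuine obstacle here; the argument is essentially bookkeeping with the eigendecomposition of $\hp$. The only subtlety — and the step I would state explicitly — is the orthogonality $V_2^T w_j = 0$, which is what kills the cross term $\tfrac1\rho\langle V_2^T w_j, V_2^T w_j^\perp\rangle$ and lets the $\tfrac1\rho$-contribution collapse to $\|V_2^T w_j^\perp\|_2^2$. Everything else (splitting $\hp^{-1}$ along the two invariant subspaces, and rewriting $v^T\hat\Lambda_1^{-1}v$ as $\|\hat\Lambda_1^{-1/2}v\|_2^2$) is routine.
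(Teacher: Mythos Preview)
Your proposal is correct and follows essentially the same route as the paper: partition $V=[V_1\ V_2]$ from the eigendecomposition of $\hp$, use that the columns of $V_2$ span $\ker(C_i)=\mathcal{W}^\perp$ to obtain $V_2^T w_j=0$, and then expand each diagonal entry $(\Di\hp^{-1}\Di^T)_{jj}=(w_j+w_j^\perp)^T\hp^{-1}(w_j+w_j^\perp)$ and sum. Your write-up is in fact more explicit than the paper's (you spell out $\hp^{-1}=V_1\hat\Lambda_1^{-1}V_1^T+\tfrac{1}{\rho}V_2V_2^T$ and the identification $\mathrm{col}(V_1)=\mathcal{W}$), but the idea is identical.
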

\begin{proof}
Recall that $\hp = V \hat{\Lambda} V^T$ where $\hat{\Lambda} = \Lambda + \rho I$ and let $V = \begin{bmatrix} V_1 & V_2 \end{bmatrix}$ be a partitioning of $V$, where $V_1 \in R^{N_i \times M_i}$, and $V_2 \in R^{N_i \times (N_i-M_i)}$. The columns of $V_2$ form a basis for the null space of $\Ci$, so $V_2^Tw = 0$.
The results follows by expanding $(\Di\hp^{-1}\Di^T)_{jj} = (w_{j} + w_{j}^{\perp})^T\hp^{-1}(w_{j} + w_{j}^{\perp})$.
%\begin{eqnarray*}
%\label{eig_E_under}
%     (\Di\hp^{-1}\Di^T)_{jj} &=& (w_{j} + w_{j}^{\perp})^T\hp^{-1}(w_{j} + w_{j}^{\perp})
%     =  w_{j}^T\hp^{-1}w_{j} + 2(w_{j}^{\perp})^T\hp^{-1} w_{j} + (w_{j}^{\perp})^T\hp^{-1}(w_{j}^{\perp})\\
%     &=& w_{j}^TV_1\hat{\Lambda}_1^{-1} V_1^Tw_{j} + (w_{j}^{\perp})^TV_1\hat{\Lambda}_1^{-1} V_1^Tw_{j} + (w_{j}^{\perp})^TV_1\hat{\Lambda}_1^{-1} V_1^T(w_{j}^{\perp}) + \frac{1}{\rho}\|V_2^T(w_{j}^{\perp})\|_2^2.
%\end{eqnarray*}

\end{proof}
Theorems \ref{thm_eigs_PP} and \ref{Thm_warehouse_D} demonstrate the importance of the parameter $\rho$. A small value of $\rho$ will lead to a good clustering of the eigenvalues around one, but if $\rho$ is too small then $(w_j^{\perp})^T\hp^{-1}(w_j^{\perp})$ will become arbitrarily large. Hence, there is a trade-off here.

Now we state the main result of this section, which gives bounds on the eigenvalues of $\hp^{-1} \Di^TD_i$.
\begin{theorem}
\label{Thm_warehouse}
     Let $C_i$ be an $M_i\times N_i$ matrix with $M_i < N_i$ and $M_i = {\rm{rank}}(C_i)$ and let $D_i$ be an $\ell \times N_i$ matrix with $r_i = {\rm{rank}}(D_i)$. Let $\hp$ be the preconditioner defined in \eqref{preconditioner_perturbed} and let $A_i$ be defined in \eqref{Ai_matrix} with $N_i \geq s_i = {\rm{rank}}(A_i)$ and $B_i = A_i^TA_i$. Then $\mathcal{M}_i = \hp^{-1}B_i$ has
\begin{itemize}
\item[(i)] $N_i - s_i$ eigenvalues equal to zero.
     \item[(ii)] $s_i-r_i$ eigenvalues in the interval $(0,1)$
\item[(iii)] $r_i$ eigenvalues in the interval % $(1,1 +  {\rm{trace}}\Di\hp^{-1} \Di^T)$.
     $\Big(1,1 +  \sum_{j=1}^\ell \; \Big(\|\hat{\Lambda}_1^{-\frac{1}{2}} V_1^T(w_j + w_j^{\perp})\|_2^2 + \frac{1}{\rho}\|V_2^T(w_j^{\perp})\|_2^2\Big) \Big)$
\end{itemize}
\end{theorem}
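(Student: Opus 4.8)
The plan is to symmetrize the generalized eigenproblem and then locate the three groups of eigenvalues by a congruence/inertia argument together with one Weyl bound. Since $\hat{\mathcal{P}}_i$ is symmetric positive definite, $\mathcal{M}_i=\hat{\mathcal{P}}_i^{-1}B_i$ is similar (via $\hat{\mathcal{P}}_i^{1/2}$) to the symmetric matrix $S\eqdef\hat{\mathcal{P}}_i^{-1/2}B_i\hat{\mathcal{P}}_i^{-1/2}$, which is positive semidefinite because $B_i=A_i^TA_i\succeq 0$; thus $\mathcal{M}_i$ is diagonalizable with real nonnegative eigenvalues and it suffices to count those of $S$. For part (i), $\mathcal{M}_iv=0$ iff $B_iv=0$, so the eigenvalue $0$ has multiplicity $\dim\mathrm{Null}(B_i)=N_i-\mathrm{rank}(B_i)=N_i-\mathrm{rank}(A_i)=N_i-s_i$; hence $S$ has exactly $N_i-s_i$ zero and $s_i$ positive eigenvalues.

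The central step is to compute the inertia of $S-I$. From $B_i=C_i^TC_i+D_i^TD_i$ and $\hat{\mathcal{P}}_i=C_i^TC_i+\rho I$ one gets $S-I=\hat{\mathcal{P}}_i^{-1/2}(D_i^TD_i-\rho I)\hat{\mathcal{P}}_i^{-1/2}$, which is congruent to $D_i^TD_i-\rho I$, so by Sylvester's law of inertia the two have the same numbers of positive, negative and zero eigenvalues. The matrix $D_i^TD_i$ is positive semidefinite with exactly $r_i$ positive eigenvalues (the smallest being $\sigma_{\min}(D_i)^2$) and $N_i-r_i$ zero eigenvalues, so for $0<\rho<\sigma_{\min}(D_i)^2$ the matrix $D_i^TD_i-\rho I$ has inertia $(r_i,\,N_i-r_i,\,0)$. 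Consequently $S$ has exactly $r_i$ eigenvalues in $(1,\infty)$, none equal to $1$, and therefore exactly $s_i-r_i$ of its positive eigenvalues lie in $(0,1)$ --- using $r_i=\mathrm{rank}(D_i)\le\mathrm{rank}(A_i)=s_i$, valid because $A_i$ contains $D_i$ as a submatrix. This establishes (ii) and the count in (iii).

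For the upper endpoint in (iii), write $S=P+Q$ with $P\eqdef\hat{\mathcal{P}}_i^{-1/2}\mathcal{P}_i\hat{\mathcal{P}}_i^{-1/2}\succeq 0$ and $Q\eqdef\hat{\mathcal{P}}_i^{-1/2}D_i^TD_i\hat{\mathcal{P}}_i^{-1/2}\succeq 0$. By Theorem~\ref{thm_eigs_PP}, $P$ (having the same eigenvalues as $\hat{\mathcal{P}}_i^{-1}\mathcal{P}_i$) has all eigenvalues in $[0,1)$, so $\lambda_{\max}(P)<1$; and $\lambda_{\max}(Q)\le\mathrm{tr}(Q)=\mathrm{tr}(D_i\hat{\mathcal{P}}_i^{-1}D_i^T)$ by cyclic invariance of the trace, which Theorem~\ref{Thm_warehouse_D} evaluates as $\sum_{j=1}^\ell\big(\|\hat{\Lambda}_1^{-1/2}V_1^T(w_j+w_j^{\perp})\|_2^2+\tfrac1\rho\|V_2^Tw_j^{\perp}\|_2^2\big)$. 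Weyl's inequality then yields $\lambda_{\max}(S)\le\lambda_{\max}(P)+\lambda_{\max}(Q)<1+\mathrm{tr}(D_i\hat{\mathcal{P}}_i^{-1}D_i^T)$, so every eigenvalue of $\mathcal{M}_i$ exceeding $1$ lies in the claimed interval.

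I expect the main obstacle to be purely the bookkeeping around $\rho$: the exact counts in (ii)--(iii) hold only while $D_i^TD_i-\rho I$ keeps inertia $(r_i,N_i-r_i,0)$, i.e. for $\rho<\sigma_{\min}(D_i)^2$; for larger $\rho$ some ``$>1$'' eigenvalues drift into $(0,1]$. Once the symmetrization and the congruence $S-I\cong D_i^TD_i-\rho I$ are in place, the rest is a one-line Weyl estimate plus the already-available Theorems~\ref{thm_eigs_PP} and~\ref{Thm_warehouse_D}.
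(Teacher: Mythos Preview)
Your proof is correct and, in fact, more complete than the paper's. The paper's argument for (i) is the same rank observation you make; for (iii) it likewise uses $\lambda_{\max}(D_i\hat{\mathcal{P}}_i^{-1}D_i^T)\le\mathrm{tr}(D_i\hat{\mathcal{P}}_i^{-1}D_i^T)$ together with Weyl's inequality (cited as \cite[Theorem~4.3.1]{Horn85}) and Theorem~\ref{Thm_warehouse_D}, which matches your $S=P+Q$ bound exactly. The genuine difference is in how the \emph{counts} $s_i-r_i$ and $r_i$ in (ii)--(iii) are obtained: the paper simply cites Theorems~\ref{thm_eigs_PP} and~\ref{Thm_warehouse_D} without spelling out why the split is precisely $r_i$ versus $s_i-r_i$, whereas your congruence $S-I\cong D_i^TD_i-\rho I$ and Sylvester's law of inertia pin this down cleanly. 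Your route has the advantage of making the hidden smallness assumption on $\rho$ explicit: the inertia of $D_i^TD_i-\rho I$ is $(r_i,\,N_i-r_i,\,0)$ only when $\rho$ lies below the smallest nonzero eigenvalue of $D_i^TD_i$, and you correctly flag that for larger $\rho$ some of the ``$>1$'' eigenvalues migrate into $(0,1]$. The paper's statement and proof are silent on this point, so your observation is a genuine sharpening rather than a defect in your argument.
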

\begin{proof}
     Part (i) holds because $B_i$ is $N_i \times N_i$ with ${\rm{rank}}(B_i) = {\rm{rank}}(A_i) = s_i$.  Part (ii) follows from Theorem \ref{thm_eigs_PP} and Theorem \ref{thm_eigs_PP}. For part (iii), notice that $\lambda_{\max}(\Di \hp^{-1}\Di^T) \leq {\rm{trace}}(\Di \hp^{-1} \Di^T)$. Using \eqref{Thm_warehouse_D} and \cite[Theorem 4.3.1]{Horn85}, gives the result.
\end{proof}
% \begin{theorem}
% \label{Thm_warehouse}
%      Let $C_i$ be an $M_i\times N_i$ matrix with $M_i < N_i$ and $M_i = {\rm{rank}}(C_i)$ and let $D_i$ be an $\ell \times N_i$ matrix with $r_i = {\rm{rank}}(D_i)$. Let $\hp$ be the preconditioner defined in \eqref{preconditioner_perturbed} and let $A_i$ and $B_i$ be defined in \eqref{Ai_matrix} and \eqref{Bi_matrix} respectively with $N_i \geq (M_i+r_i) = {\rm{rank}}(A_i)$. Then $\hp^{-1}B_i$ has
% \begin{itemize}
% \item[(i)] $N_i - (M_i + r_i)$ eigenvalues equal to zero.
%      \item[(ii)] $M_i$ eigenvalues in the interval $(0,1)$
% \item[(iii)] $r_i$ eigenvalues in the interval% $(1,1 +  {\rm{trace}}\Di\hp^{-1} \Di^T)$.
% \begin{equation*}
%      \Big(1,1 +  \sum_{j=1}^\ell \; \Big(\|\hat{\Lambda}_1^{-\frac{1}{2}} V_1^T(w_j + w_j^{\perp})\|_2^2 + \frac{1}{\rho}\|V_2^T(w_j^{\perp})\|_2^2\Big) \Big)
% \end{equation*}
% \end{itemize}
% \end{theorem}
% \begin{proof}
%      Part (i) holds because $B_i$ is $N_i \times N_i$ with ${\rm{rank}}(B_i) = {\rm{rank}}(A_i) = M_i + r_i$.  Part (ii) follows from Theorem \ref{thm_eigs_PP}. For part (iii), notice that $\lambda_{\max}(\Di \hp^{-1}\Di^T) \leq {\rm{trace}}(\Di \hp^{-1} \Di^T)$. Now using
%  \eqref{eig_ofDhPD} and Theorem \ref{HJ} gives the result.
% \end{proof}

\emph{Remark:} For ICD we require $\text{rank}(A_i) = N_i$, because this ensures that $B_i$ is a positive definite matrix. Notice that in this case, Theorem \ref{Thm_warehouse} shows that all eigenvalues of $\mathcal{M}_i= \hp^{-1}B_i$ are strictly greater than zero (i.e., $N_i = s_i$).

\end{document}